\numberwithin{equation}{section}
\newtheorem{theorem}{Theorem}
\newtheorem{prop}[theorem]{Proposition}
\newtheorem{cor}[theorem]{Corollary}
\newtheorem{lemma}[theorem]{Lemma}
\newtheorem{definition}[theorem]{Definition}
\newtheorem{conj}{Conjecture}
\theoremstyle{definition}
\newtheorem{example}[theorem]{Example}
\newtheorem{remark}[theorem]{Remark}
\DeclareMathOperator{\gt}{gt}
\DeclareMathOperator{\id}{id}
\DeclareMathOperator{\sgt}{sgt}
\DeclareMathOperator{\TGT}{TGT}
\DeclareMathOperator{\T}{T}
\DeclareMathOperator{\RGT}{RGT}
\DeclareMathOperator{\GT}{GT}
\DeclareMathOperator{\MT}{MT}
\DeclareMathOperator{\st}{Strict}
\DeclareMathOperator{\sgn}{sgn}
\DeclareMathOperator*{\pf}{Pf}
\newcommand{\e}{{\operatorname{E}}}
\newcommand{\asym}{\mathbf{ASym}}
\newcommand{\sym}{\mathbf{Sym}}
\newcommand{\cart}{\bigtimes}
\newcommand{\const}[2]{{\mathbf{#1}}^{#2}}
\newcommand{\Dfn}[1]{\emph{\color{violet}#1}} 
\title[Monotone trapezoids]{The number of monotone trapezoids with prescribed bottom row}
\author{Ilse Fischer and Hans H\"ongesberg}
\thanks{The authors acknowledge support from the Austrian Science Fund (FWF) grants 
\href{https://dx.doi.org/10.55776/F1002}{10.55776/F1002} (IF),
\href{https://dx.doi.org/10.55776/P34931}{10.55776/P34931} (HH), 
\href{https://dx.doi.org/10.55776/J4810}{10.55776/J4810} (HH)}
\begin{document} 

\begin{abstract} 
 We establish an operator formula for the number of monotone trapezoids with prescribed bottom row, generalizing alternating sign matrices. The special case of the formula for monotone triangles previously provided an alternative proof for the enumeration of alternating sign matrices and led to several results on alternating sign triangles and alternating sign trapezoids. The generalization presented in this paper reveals an additional ``hidden operator'' that is annihilated in the special case of monotone triangles, whose discovery was a major challenge.

The enumeration formula is conceptually simple: it applies, in addition to the newly discovered hidden operator, an operator ensuring row strictness to the formula for the number of Gelfand--Tsetlin trapezoids. Notably, the top row of the monotone trapezoid is not prescribed. Thus, our result involves a free boundary, which is a novel situation in this area. We also uncover an unexpected relation to the coinvariant algebra and propose a conjecture on its generalization.
\end{abstract}

\maketitle 

\section{Introduction} 
\Dfn{Gelfand--Tsetlin patterns} were introduced in \cite{gelfandUni}  in connection with the representation theory of the Lie algebra $\mathfrak{sl}(n,\mathbb{C})$. They are triangular arrays of integers that satisfy certain monotonicity conditions, see Definition~\ref{GTT} for details. Using their representation theoretical meaning  
as well as the {Weyl dimension formula}, it can be shown that the number of Gelfand--Tsetlin patterns with fixed bottom row $(k_1,\ldots,k_n)$ is given by 
\begin{equation}
\label{weyl} 
\prod_{1 \le i < j \le n} \frac{k_j-k_i+j-i}{j-i}.
\end{equation} 
However, there are also several possibilities to prove this formula that are purely based on their combinatorial definition. For instance, one can easily establish a bijection with certain families of {non-intersecting lattice paths} and then the {lemma of Lindstr\"om--Gessel--Viennot}  \cite{Lin73,GesVie85,GesVie89} reduces the problem to the calculation of a determinant, which can in turn be evaluated using the {Vandermonde determinant evaluation}, see for instance \cite[Appendix~A]{Fis12}. The most combinatorial way to enumerate Gelfand--Tsetlin patterns with prescribed bottom row is probably to use the bijection with {semistandard Young tableaux} of shape $(k_n,\ldots,k_1)$ (see \cite[p. 314]{Sta99}) and then the enumeration formula follows by rewriting Stanley's {hook-content formula} \cite[Theorem 7.21.2]{Sta99}; the latter formula has several bijective proofs, see for instance \cite{remmel1983,kratthook1,kratthook2}. A proof by the first author that explains the zeros of the formula essentially by providing a combinatorial interpretation for all $(k_1,\ldots,k_n) \in \mathbb{Z}^n$ is demonstrated in \cite{Fis05}.

\Dfn{Monotone triangles} are Gelfand--Tsetlin patterns where all rows are \Dfn{strictly increasing}. They have been introduced in \cite{MilRobRum86} in connection with \Dfn{alternating sign matrices} as monotone triangles with bottom row $(1,\ldots,n)$ are in easy bijective correspondence with $n \times n$ alternating sign matrices. Whether there is also a connection of 
monotone triangles to representation theory is not clear at this point, however, some indications are given in \cite{Oka06,bounded} as well as in forthcoming works of the authors and Gangl and the first author on generalizations of the Littlewood identity as well as the Cauchy identity that involve alternating sign matrices. The interest in alternating sign matrices is currently also stimulated through their appearance in other fields such as probability, see \cite{chhita} for one example\footnote{Other examples are formulated in terms of six-vertex configurations which are alternating sign matrices in disguise.}, and, in disguise as bumpless pipe dreams, in algebraic geometry and algebraic combinatorics, see \cite{weigandt}.

From the point of view of enumerative combinatorics an interesting question is whether the number of monotone triangles is also expressible by some enumeration formula. For most enumeration problems this is certainly not the case. Clearly, the answer also depends on what we consider to be an acceptable formula. Compared to Gelfand--Tsetlin patterns, it is significantly harder to establish an enumeration formula for monotone triangles and it also seems that one has to admit a wider class of enumeration formulas in so far that we also allow the involvement of shift operators.  In \cite{Fis06}, it was shown that the number of monotone triangles with bottom row $(k_1,\ldots,k_n)$ is given by the following operator formula 
\begin{equation}
\label{operatorweyl} 
\prod_{1 \le i < j \le n} (\e^{-1}_{k_i}+\e_{k_j} - \e^{-1}_{k_i} \e_{k_j})^{-1} \prod_{1 \le i < j \le n} \frac{k_j-k_i+j-i}{j-i}, 
\end{equation} 
where $\e_x$ denotes the shift operator, defined as $\e_x p(x) \coloneq p(x+1)$ for any polynomial~$p$ in $x$. This formula was key to an alternative proof for enumerating $n \times n$ alternating sign matrices, see \cite[Theorem~2]{Fis07}. It is worth mentioning that
the factors $ (\e^{-1}_{k_i}+\e_{k_j} - \e^{-1}_{k_i} \e_{k_j})^{-1}$ in \eqref{operatorweyl} can be replaced by $\e^{-1}_{k_j}+\e_{k_i} - \e^{-1}_{k_j} \e_{k_i}$, see \cite{Fis06}.

There is an intuition behind the operator $\e^{-1}_{k_i}+\e_{k_j} - \e^{-1}_{k_i} \e_{k_j}$ that is related to the strict increase along rows in a monotone triangle. This is explained  in the next section. The formula in \eqref{operatorweyl} is equivalent to a constant term formula as well as to a formula that reduces the counting problem  to the evaluation of the symmetrizer of a rational function in $x_1,\ldots,x_n$ at $(x_1,\ldots,x_n)=(1,\ldots,1)$; the complication in the evaluation of the latter stems from the fact that the rational function is singular at  $(x_1,\ldots,x_n)=(1,\ldots,1)$, see 
\cite[Cor. 3 and Rem. 2.1.]{fischergog}. A multivariate version of this formula, which includes \Dfn{Schur polynomials} as well as \Dfn{Hall--Littlewood polynomials} and \Dfn{symmetric Grothendieck polynomials} as special cases, has recently been provided in \cite{nASMDPP}.

As for Gelfand--Tsetlin patterns, a next level of complication is reached when considering \Dfn{Gelfand--Tsetlin trapezoids}, which are defined as Gelfand--Tsetlin patterns with a chopped-off triangle at the top (Definition~\ref{GTT}). The number of Gelfand--Tsetlin trapezoids with prescribed bottom row of given height can be expressed using a \Dfn{Pfaffian}, see Theorem~\ref{GT}. A proof of the result is presented in Section~\ref{GTproof}. Another proof that generalizes the one for \eqref{weyl}  using the Lindstr\"om--Gessel--Viennot Lemma can be accomplished by invoking an extension of Stembridge \cite{ste90} of that lemma. Let us emphasize that the top row of the Gelfand--Tsetlin trapezoid is not prescribed and thus we have an instance of a \Dfn{free boundary} here, which is a new situation in this area.

Now the question is whether we can also find a formula for the number of \Dfn{monotone trapezoids} with prescribed bottom row of given height, where a monotone trapezoid is a Gelfand--Tsetlin trapezoid with strictly increasing rows. Comparing \eqref{weyl} and \eqref{operatorweyl}, it is tempting to guess that the number of monotone trapezoids with bottom row $k_1,\ldots,k_n$ of height $h+1$ is 
\begin{equation}
\label{wrong}  
\prod_{1 \le i < j \le n} (\e^{-1}_{k_i}+\e_{k_j} - \e^{-1}_{k_i} \e_{k_j})^{-1} \GT_h(k_1,\ldots,k_n), 
\end{equation} 
where $\GT_h(k_1,\ldots,k_n)$ denotes the number of Gelfand--Tsetlin trapezoids with bottom row $k_1,\ldots,k_n$ of height $h+1$. 
As a matter of fact, this guess is wrong, but it is not far from the truth as the main result (Theorem~\ref{main}) of this paper will show. It is worth noting that the main result also includes a possibility of replacing $(\e^{-1}_{k_i}+\e_{k_j} - \e^{-1}_{k_i} \e_{k_j})^{-1}$ with $\e^{-1}_{k_j}+\e_{k_i} - \e^{-1}_{k_j} \e_{k_i}$ as it is the case for 
monotone triangles.

The reader may have noted that the enumeration formulas we have discussed so far are all polynomials in $k_1,\ldots,k_n$. There is a simple reason for that, which is discussed in Section~\ref{additive}. The degree grows linearly for any fixed $k_i$  with the height for all these enumeration formulas. In the case of Gelfand--Tsetlin patterns and monotone triangles, this degree is $n-1$ if the array has $n$ rows; in the case of Gelfand--Tsetlin trapezoids the degree is $2h$ for arrays with $h+1$ rows. This is surprising as a straightforward degree estimation would suggest in all cases an exponential growth of the degree with $h$, see also Section~\ref{additive}. Now all of this applies also to monotone trapezoids, in particular already \cite[Section~6, (3)]{Fis06} implies that 
the degree also grows linearly in this case, without knowing the formula. Since then, the first author believed that there exists an operator formula 
for the number of monotone trapezoids that extends \eqref{operatorweyl}. Finding the formula ended up in a long journey and an involved proof, where the key was the discovery of a certain additional operator that is annihilated in the case of monotone triangles. This operator is not unique, but one possibility is 
$\prod_{1 \le i < j \le n} A(\Delta_{k_i},\Delta_{k_j})$ where 
$$
A(x,y) = (1+x+y) \frac{P(x,y) P(y,x)}{P(\iota(x),y) P(\iota(y),x)}
$$
with $\iota(x)=- \frac{x}{1+x}$, $P(x,y)=1- (x+1) (y+1) - e^{\frac{i \pi}{3}} (x+2)$ and $\Delta_x \coloneq \e_x - \id$.

\medskip

We conclude the introduction by indicating some unexpected relation to the \Dfn{coinvariant algebra} (see \cite{bergeron}) that is related to the fact that the operator which is needed to correct \eqref{wrong} is not unique. The classical coinvariant algebra is known to be isomorphic to
$$
\left\{\left. R(\Delta_{k_1},\ldots,\Delta_{k_n}) \prod_{1 \le i < j \le n} \frac{k_j-k_i+j-i}{j-i} \right| \left. R(X_1,\ldots,X_n) \in \mathbb{Q}[X_1,\ldots,X_n] \right. \right\}, 
$$
which follows from the fact that the ideal of $R(X_1,\ldots,X_n) \in \mathbb{Q}[X_1,\ldots,X_n]$ with 
$$R(\Delta_{k_1},\ldots,\Delta_{k_n}) \prod_{1 \le i < j \le n} \frac{k_j-k_i+j-i}{j-i}=0$$
is generated by the symmetric polynomials without constant term. Now  
\begin{equation}
\label{coinvar} 
\left\{\left. R(\Delta_{k_1},\ldots,\Delta_{k_n}) \GT_h(k_1,\ldots,k_n) \right| \left. R(X_1,\ldots,X_n) \in \mathbb{Q}[X_1,\ldots,X_n] \right. \right\}
\end{equation} 
certainly generalizes the coninvariant algebra, since the latter is obtained by specializing $h=n-1$ in \eqref{coinvar}. In Conjecture~\ref{anni}, we offer a conjectural set of generators of the annihilator ideal for this generalization.

\section{Main result} 
\label{sec:main} 

The purpose of this paper is to enumerate the objects defined next.

\begin{definition} 
\label{GTT}
Let $h,n$ be non-negative integers with $n \ge h$.  
An \Dfn{$(h,n)$-Gelfand--Tsetlin trapezoid} (or short \Dfn{$(h,n)$-GT trapezoid}) is an array of integers of the following form 
$$
\begin{array}{ccccccccccccc}
& &&  a_{0,1} & & a_{0,2} & & \dots & & a_{0,n-h} &&& \\
&&a_{1,1} & & a_{1,2} & & \dots & & \dots & & a_{1,n-h+1}&& \\ 
& \iddots &   & \iddots & & \dots & & \dots & & \dots & &  \reflectbox{$\iddots$}& \\
a_{h,1}  & & a_{h,2} & & \dots & & \dots & & \dots & & \dots & & a_{h,n} 
\end{array} 
$$
with weak increase along $\nearrow$-diagonals and along $\searrow$-diagonals. An \Dfn{$(h,n)$-monotone trapezoid} is an 
$(h,n)$-GT trapezoid with strictly increasing rows.

Note that $(n-1,n)$-GT trapezoids are also known as \Dfn{Gelfand--Tsetlin patterns} and that 
$(n-1,n)$-monotone trapezoids are also known as \Dfn{monotone triangles}. Also the somewhat degenerate case of $(n,n)$-GT trapezoids are Gelfand--Tsetlin patterns, and, similarly, $(n,n)$-monotone trapezoids are monotone triangles.

\end{definition} 

\begin{example}\label{ex:GTtrapezoid}
A $(3,7)$-GT trapezoid is displayed next.
 $$
\begin{array}{ccccccccccccc}
& &&  8 & & 12 & & 15 & & 18 &&& \\
&& 7 & & 10 & & 15 & & 17 & &  19 && \\ 
& 6 &   & 8 & & 14 & & 15 & & 17 & &  19& \\
3  & & 8 & & 12 & & 15 & & 16 & & 18 & & 20 
\end{array} 
$$
It is also a $(3,7)$-monotone trapezoid as the rows increase strictly.
\end{example} 

\medskip

Extending the standard bijection between monotone triangles and alternating sign matrices, it can be shown that $(h,n)$-monotone triangles with bottom row $(k_1,\ldots,k_n)$ correspond to $h \times k_n$ matrices with entries in $\{0,\pm 1\}$ such that
\begin{itemize} 
	\item non-zero entries alternate in each row and column, 
	\item all row sums are $1$, and
	\item the bottom most non-zero entry of a column (if existent) is $1$ for those columns that are in positions $k_1,\ldots,k_n$ and it is $-1$ (if existent) for the others.
	\end{itemize}
In fact, mapping an $(h,n)$-monotone triangle to a $\{0,\pm 1\}$-matrix by extending the standard bijection yields an $(h+1) \times k_n$ matrix whose first row sums to $n-h$ and contains no entries equal to $-1$. We can, however, delete the first row since it is the unique $\{0,1\}$-row such that all columns at positions $k_1,\ldots,k_n$ sum to $1$ and to $0$ otherwise. In the standard case $h=n-1$, this first row contains a single $1$ and, thus, the non-zero entries alternate in sign.

\begin{example}
The $(3,7)$-monotone trapezoid from Example~\ref{ex:GTtrapezoid} is mapped to the following $3 \times 20$ matrix:
\begin{equation*}
	\begin{pmatrix}
	0 & 0 & 0 & 0 & 0 & 0 & 1 &-1 & 0 & 1 & 0 &-1 & 0 & 0 & 0 & 0 & 1 &-1 & 1 & 0\\
	0 & 0 & 0 & 0 & 0 & 1 &-1 & 1 & 0 &-1 & 0 & 0 & 0 & 1 & 0 & 0 & 0 & 0 & 0 & 0\\
	0 & 0 & 0 & 1 & 0 &-1 & 0 & 0 & 0 & 0 & 0 & 1 & 0 &-1 & 0 & 1 &-1 & 1 &-1 & 1
	\end{pmatrix}
\end{equation*}
\end{example}
 
Throughout the article, we let
$\mathbf{k}_n \coloneq(k_1,\ldots,k_n)$ where $\mathbf{k}$ may also be replaced by other  letters. We define
\begin{align*} 
\GT_h(\mathbf{k}_n) & \coloneq \# \text{ of $(h,n)$-GT trapezoids with bottom row $\mathbf{k}_n$,} \\ 
\MT_h(\mathbf{k}_n) &\coloneq \# \text{ of $(h,n)$-monotone trapezoids with bottom row $\mathbf{k}_n$.}
\end{align*} 

The goal of this paper is to provide an \Dfn{operator formula} for $\MT_h(\mathbf{k}_n)$. It will be expressed using 
$\GT_h(\mathbf{k}_n)$ for which a formula is (essentially) known in terms of a Pfaffian as stated in Theorem~\ref{GT}.

In order to state the latter, we recall the definition of the \Dfn{Pfaffian} of an upper triangular array of even order: a pairing 
$$\pi=\{(i_1,j_1),(i_2,j_2),\ldots,(i_n,j_{n})\}$$ of the integers in $[2n]=\{1,2,\ldots,2n\}$ is said to be a matching, since it can be 
interpreted as a perfect matching of the complete graph $K_{2n}$. Assuming $i_l < j_l$ for all $l$, we define the sign of $\pi$ as the sign of 
the permutation $i_1 j_1 i_2 j_2 \ldots i_n j_n$. Note that the sign is well-defined as the sign of the permutation does not depend on the order of the pairs $(i_l,j_l)$. Let $A=(A_{i,j})_{1 \le i < j \le 2n}$ be an upper triangular array of order $2n$. The Pfaffian $\pf_{1 \le i < j \le 2n} \left( A \right)$ 
of $A$ is the sum over all matchings of $\pi =\{(i_1,i_2),\ldots,(i_n,j_n)\}$ with summand 
\begin{equation}
\label{displayed} 
\sgn \pi A_{i_1,j_1} A_{i_2,j_2} \ldots A_{i_n,j_n}.
\end{equation}
Note that if we extend $A$ to a $2n \times 2n$ skew-symmetric matrix, then we do not need to insist on $i_l < j_l$ in \eqref{displayed} as sign changes in the permutation that occur in the context of entries $A_{i_l,j_l}$ such that $i_l>j_l$ are compensated by the sign of $A_{i_l,j_l}$. It is also well-known that we have 
$$
\left[ \pf_{1 \le i < j \le 2n} (A_{i,j}) \right]^2= \det_{1 \le i,j \le 2n} (A_{i,j})
$$
then.

We use the following definition of the binomial coefficient for non-negative integers $k$ and 
integers $n$
$$
\binom{n}{k} \coloneq \frac{n(n-1)\cdots (n-k+1)}{k!}
$$
as well as the \Dfn{shift operator} $\e_x$ and the \Dfn{(forward) difference operator} $\Delta_x$, which act on functions $f(x)$ as follows:
$$
\e_x f(x) \coloneq f(x+1) \quad \text{and} \quad \Delta_x f(x) \coloneq f(x+1)-f(x).
$$
We also extend this notion to vectors, that is, for $\mathbf{x}=(x_1,\ldots,x_n)$ and functions $f(\mathbf{x})$, we 
set 
\begin{equation}
\label{vectors}
\e_{\mathbf{x}} f(\mathbf{x}) \coloneq \e_{x_1} \ldots \e_{x_n} f(x_1,\ldots,x_n) \quad \text{and} \quad
\Delta_{\mathbf{x}} f(\mathbf{x}) \coloneq  \Delta_{x_1} \ldots \Delta_{x_n} f(x_1,\ldots,x_n).
\end{equation} 
The following bivariate polynomial depending on a non-negative integer $h$ is crucial for what follows. 
We set $\gt_0(x,y)=1$, and, for $h \ge 1$, we set 
$$
\gt_h(x,y) \coloneq \sum_{p=h}^{x-1} \left( \binom{x-p+h-1}{h} \binom{y-p+h-2}{h} - \binom{x-p+h-2}{h} \binom{y-p+h-1}{h}  \right).
$$
Moreover, the following triangular array of size $n$ as well as the following  $n \times h'$ matrix will be used throughout the article:
$$
\TGT_{h}(\mathbf{k}_n) \coloneq \left( \gt_h(k_i,k_j) \right)_{1 \le i<j \le n} \quad \text{and} \quad 
\RGT_{h,h'}(\mathbf{k}_n) \coloneq \left( \binom{k_i}{h-j} \right)_{1 \le i \le n, 1 \le j \le h'}.
$$
Here $\TGT$ indicates that we have a triangular array associated with GT-trapezoids, while $\RGT$ indicates that we have a rectangular array associated with GT-trapezoids.
We also set 
$$\mathbf{k\!\uparrow}_n \coloneq \mathbf{k}_n + (1,2,\ldots,n)
\quad \text{and} \quad \T(0)_h \coloneq (0)_{1 \le i < j \le h},$$
where addition of vectors is always component wise in this paper.

With this notation, we are now in the position to state the formula for the number of Gelfand--Tsetlin patterns with fixed bottom row.

\begin{theorem} 
\label{GT} 
Let $h$ and $n$ be non-negative integers with $n \ge h$.
\begin{enumerate} 
\item In the case that $h+n$ is even, we have 
$$
\GT_h(\mathbf{k}_n) =  \pf \left( 
\begin{array}{c|c}  
\TGT_{h}(\mathbf{k\!\uparrow}_n) & \RGT_{h,h}(\mathbf{k\!\uparrow}_n) \\ \hline 
                                                     & \T(0)_h 
\end{array} 
\right).
$$
\item In the case that $h+n$ is odd, we have 
$$
\GT_h(\mathbf{k}_n) = \Delta^h_{k_{n+1}} \GT_h(\mathbf{k}_{n+1}).
$$
\end{enumerate} 
\end{theorem}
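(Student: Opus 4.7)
The plan is to prove the two parts separately: part~(2) by an elementary polynomial-degree argument and part~(1) via Stembridge's Pfaffian extension \cite{ste90} of the Lindstr\"om--Gessel--Viennot lemma.

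For part~(2), each $(h,n+1)$-GT trapezoid with bottom row $(\mathbf{k}_n,k_{n+1})$ uniquely decomposes as an $(h,n)$-GT trapezoid $T$ with bottom row $\mathbf{k}_n$ (its restriction to the leftmost $n$ columns) together with a rightmost $\searrow$-diagonal $b_0\le b_1\le\cdots\le b_h=k_{n+1}$ satisfying the lower-bound constraints $b_i\ge a_{i+1,n-h+i+1}(T)$ forced by the $\nearrow$-diagonal condition. For fixed $T$, the number $N(T,k_{n+1})$ of admissible such diagonals is a polynomial in $k_{n+1}$ of degree $h$ with leading coefficient $1/h!$: asymptotically as $k_{n+1}\to\infty$ the lower-bound constraints $b_i\ge c_i$ become negligible and the count approaches $\binom{k_{n+1}-c_0+h}{h}$, where $c_0=a_{1,n-h+1}(T)$. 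Since $\Delta_{k_{n+1}}^h$ extracts $h!$ times the leading coefficient of a polynomial of degree $h$, one obtains $\Delta_{k_{n+1}}^h N(T,k_{n+1})=1$ for every $T$, and summing over $T$ gives $\Delta_{k_{n+1}}^h\GT_h(\mathbf{k}_{n+1})=\GT_h(\mathbf{k}_n)$.

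For part~(1), I would use the standard bijection between $(h,n)$-GT trapezoids with bottom row $\mathbf{k}_n$ and $n$-tuples of non-intersecting NE-lattice paths, with path $i$ running from the source $(k_i+i,0)$ to a free endpoint on the line $y=h$ (non-intersection encoding the GT interlacing), and then apply Stembridge's Pfaffian formula. Because the top row of the trapezoid has only $n-h$ entries, $h$ virtual sinks must be added at prescribed auxiliary positions to close the path system; the hypothesis $h+n$ even yields the correct parity. The resulting $(n+h)\times(n+h)$ Pfaffian then has the advertised block structure: the source-source $\TGT_h$-block has $(i,j)$-entry equal to the weighted count of non-intersecting pairs of paths from sources $i$ and $j$, and slicing this count according to the ``separation index'' $p$ at which the two paths diverge (a vertical cut in the trapezoid) expresses it as a sum of $2\times 2$ LGV determinants $\binom{x-p+h-1}{h}\binom{y-p+h-2}{h}-\binom{x-p+h-2}{h}\binom{y-p+h-1}{h}$ over $p\in\{h,\ldots,x-1\}$, which matches $\gt_h(k_i+i,k_j+j)$; the source-virtual $\RGT_{h,h}$-block has single-path binomial entries $\binom{k_i+i}{h-j}$ counting paths from source $i$ to the $j$-th virtual sink; and the virtual-virtual block vanishes.

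The main obstacle will be precisely identifying the positions of the virtual sinks and verifying that the summation range $p\in\{h,\ldots,x-1\}$ is forced by the geometry of the trapezoid (in particular its slanted left boundary), so that the Stembridge pair-contribution matches $\gt_h$ in the exact form displayed. A careful reading of \cite{ste90} will fix the sign conventions, and small-case sanity checks ($h=0$ trivially; $h=1$, $n=3$ by direct expansion) will confirm the identification and be consistent with part~(2) via the recursion.
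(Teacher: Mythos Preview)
Your approach to both parts differs from the paper's, which proves each part by induction on $h$ using the recursion $\GT_h(\mathbf{k}_n)=\sum_{\mathbf{l}_{n-1}\preceq\mathbf{k}_n}\GT_{h-1}(\mathbf{l}_{n-1})$. For part~(1) the paper verifies the base case $h=0$ (Pfaffian of the all-ones array equals $1$) and then shows in Lemma~\ref{gt-step} that the Pfaffian expression satisfies the same recursion, via telescoping, an antisymmetric/symmetric decomposition of the Pfaffian, and Laplace expansion along the last column. The paper explicitly says this inductive route is chosen as a warm-up for the proof of the main Theorem~\ref{main}, where the same telescoping/decomposition machinery recurs; so the inductive proof buys a template that the Stembridge route would not.

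Your argument for part~(2) is correct and is cleaner than the paper's: peeling off the rightmost $\searrow$-diagonal and observing that the residual count is a degree-$h$ polynomial in $k_{n+1}$ with leading coefficient $1/h!$ gives the identity in one stroke. The paper instead inducts on $h$ and uses Lemma~\ref{eat} to reinsert a summation over $l_n$.

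For part~(1), the Stembridge route is legitimate (the paper itself says so after Theorem~\ref{GT}), but your sketch glosses over the one genuinely non-obvious point. Stembridge's theorem for $n$ sources with free endpoints produces an $n\times n$ Pfaffian, not an $(n+h)\times(n+h)$ one; the block structure with $\RGT_{h,h}$ and $\T(0)_h$ does not fall out of the statement in \cite{ste90} directly. Getting there typically requires either the ``mixed fixed/free endpoints'' trick (treat $h$ designated sinks as additional reversed sources, then argue that the virtual--virtual block vanishes because each virtual source has a unique path to a distinguished sink) or the Ishikawa--Wakayama minor-summation formula applied to $\sum_{\text{top rows}}\det(\cdot)$. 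Either way this is where the content lies, and your ``separation index $p$'' description of $\gt_h$ is heuristic rather than a derivation; the summation range $p\in\{h,\ldots,x-1\}$ and the specific binomials will only be forced once the virtual-sink positions are pinned down. You flag this correctly as the main obstacle, but be aware it is the whole proof, not a detail.
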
 

{\bf The strict operator.} In order to state the main result, we introduce the following crucial operator $\st_{x,y}$, which is defined through shift operators with respect to two variables $x,y$:
$$
\st_{x,y} \coloneq \e_x^{-1} + \e_y - \e_x^{-1} \e_y.
$$
There is the following intuition behind this operator, which also explains the naming. Let us first consider Gelfand--Tsetlin trapezoids with bottom row $k_1,\ldots,k_n$ and let 
$l_1,\ldots,l_{n-1}$ denote the preceding row, so that the bottom two rows are as follows:
$$
\begin{array}{ccccccccccccc}
& l_{1}  && \dots && l_{i-1} && l_i && \dots && l_{n-1} & \\
k_1  && \dots  && k_{i-1} && k_i &&  k_{i+1} && \dots && k_n
\end{array} 
$$
We concentrate on the situation locally around $k_i$: by the definition of GT trapezoids, we require 
$$
(l_{i-1},l_i)  \in  [k_{i-1},k_i] \times [k_i,k_{i+1}],
$$
where $[a,b] \coloneq \{a,a+1,\ldots,b\}$ for integers $a,b$. On the other hand, describing the situation locally around $k_i$ in a monotone trapezoid, this transforms into
$$
(l_{i-1},l_i) \in   \left. \st_{k^L_i,k^R_i} [k_{i-1},k^L_i] \times [k^R_i,k_{i+1}]   \right|_{k^L_i=k^R_i=k_i},
$$
where ``$+$'' is interpreted as the union and ``$-$'' is interpreted as the complement, and we have to work with multisets so that multiplicities are taken into account. Indeed, 
\begin{multline} 
\label{strictop}
 \left. \st_{k^L_i,k^R_i} [k_{i-1},k^L_i] \times [k^R_i,k_{i+1}]   \right|_{k^L_i=k^R_i=k_i} \\
 = [k_{i-1},k_i-1] \times [k_i,k_{i+1}] + [k_{i-1},k_i] \times [k_i+1,k_{i+1}] - [k_{i-1},k_i-1] \times [k_i+1,k_{i+1}] \\ 
 = [k_{i-1},k_i-1]  \times [k_i,k_{i+1}] + \{k_i\} \times [k_i+1,k_{i+1}],
 \end{multline}
 and now the latter set is obviously the set of all $(l_{i-1},l_i)$ such that $k_{i-1} \le l_{i-1} \le k_i \le l_{i} \le k_{i+1}$ and 
 $l_{i-1} < l_i$. 

It is also important to note that $\st_{x,y}$ is invertible when applying to polynomials in $x$, as 
$$
\st_{x,y} = \e_{x}^{-1} ( \id + \e_y \Delta_x) 
$$
and, by the geometric series expansion, 
$$
(\id + \e_y \Delta_x)^{-1} = \sum_{i \ge 0} (-1)^i \e_y^{i} \Delta^i_x.
$$
Since any polynomial in $x$ of degree $m$ vanishes when applying $\Delta^i_x$ for $i>m$, this sum is effectively finite.

\bigskip

{\bf The ``hidden'' operator.} We need a second bivariate operator. The naming of the operator stems from the fact that it is hidden in the special case of monotone triangles for which the formula was proven about $20$ years ago. Finding this operator was one major obstacle that we needed to overcome in this work, and it is the reason why it took so long to find the extension.

The operator is symmetric in $x,y$ and, to some extend, its purpose is to annihilate $\st_{x,y}$ at one important place in the computation. There is in fact a choice for this operator. In the following, we set
\begin{equation}
\label{iota}  
\iota(x) \coloneq - \frac{x}{1+x}.
\end{equation}
Note that $\iota(x)$ is an involution.  
A hidden  formal power series $A(x,y)$ is obtained 
by first choosing a formal power series 
$P(x,y)$ with non-vanishing constant term that is a solution of 
\begin{equation} 
\label{P}
\frac{P(x,\iota(x)) P(\iota(x),x)}{P(x,x) P(\iota(x),\iota(x))} = \frac{1+x}{1+x+x^2}
\end{equation} 
and then setting  
$$
A(x,y) \coloneq  (1+x+y) \frac{P(x,y) P(y,x)}{P(\iota(x),y) P(\iota(y),x)} 
\quad \text{or} \quad
A(x,y) \coloneq  (1- x y) \frac{P(x,y) P(y,x)}{P(\iota(x),y) P(\iota(y),x)} .
$$
Note that $A(x,y)$ is a formal power series with constant term $1$. The associated operator acting on polynomials in variables $x,y$ is then defined as 
\begin{equation}
\label{A} 
A_{x,y} \coloneq A(\Delta_x,\Delta_y).
\end{equation}

Possible choices for $P(x,y)$'s are 
$$
1- (x+1) (y+1) - \rho (x+2) \quad \text{and} \quad 1-(x+1)(y+1) + \rho (x+1) (x+2),
$$
where $\rho$ is a primitive sixth root of unity.
Also  
\begin{multline*} 
\sqrt{(1- (x+1) (y+1) - \rho (2+x))(1- (x+1) (y+1) - \rho^5(x+2))} \\ 
= \sqrt{4+ 6 x + 3 x^2+2 y +y^2 + 5 x y + 3 x^2 y  + 2 x y^2 +x^2 y^2}
\end{multline*} 
 and 
\begin{multline*} 
\sqrt{(1-(x+1) (y+1) + \rho (x+1) (x+2))(1-(x+1) (y+1) + \rho^5 (x+1) (x+2))} \\
=\sqrt{4+10 x + 11 x^2+5 x^3+x^4- 2 y+y^2-3 x y-2 x^2 y-x^3 y+2 x y^2+x^2 y^2}
\end{multline*} 
are solutions in the ring of formal power series; they have rational coefficients using 
$$
\sqrt{1 + q(x,y)} = \sum_{i \ge 0} \binom{1/2}{i} q(x,y)^i,
$$
where $q(x,y)$ is a formal power series with vanishing constant term. 

Two other interesting choices for $P(x,y)$ are $P(x,y)=(1+x + x y) Q(x,y)$ with
$$
Q(x,y)= x y + \rho x  + \rho^{-1} y - 2  \quad \text{or} \quad  Q(x,y)= x y + \left(1+ \frac{\rho}{2} \right) x  + \left(1+\frac{\rho^{-1}}{2} \right) y + 1. 
$$
One can deduce from \eqref{P} that $Q(x,y)$ can be replaced by any formal power series $Q(x,y)$ with non-vanishing constant term that is a solution of
\begin{equation}
\label{Q}
\frac{Q(x,\iota(x)) Q(\iota(x),x)}{Q(x,x) Q(\iota(x),\iota(x))} = \frac{1+x+x^2}{(1-x)(1+2x)}.
\end{equation} 

\medskip

Our main theorem is as follows.

\begin{theorem} 
\label{main} 
Let $h,n$ be non-negative integers with $n \ge h$ and $A(x,y)$ a hidden formal power series. Then the number of $(h,n)$-monotone trapezoids with bottom row $\mathbf{k}_n$ is 
$$
\MT_h(\mathbf{k}_n) = \prod_{1 \le i < j \le n} \st^{-1}_{k_i,k_j} A_{k_i,k_j} \GT_h(\mathbf{k}_n).
$$
\end{theorem}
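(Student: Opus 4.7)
I would prove Theorem~\ref{main} by induction on $h$, extending the strategy of \cite{Fis06} for monotone triangles to the trapezoidal setting and tracking the new operator $A_{k_i,k_j}$. In the base case $h=0$, an $(0,n)$-monotone trapezoid is determined by its bottom row, so both sides must reduce (as polynomials in $\mathbf{k}_n$) to the polynomial extension of the indicator of strict increase of $\mathbf{k}_n$. Since $\GT_0\equiv 1$ and $A(0,0)=1$, the operator $A_{k_i,k_j}$ acts trivially on $\GT_0$, and the identity collapses to the monotone-triangle formula \eqref{operatorweyl} applied to a single row.

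For the inductive step I would start from the summation recursion
\[
  \MT_h(\mathbf{k}_n) \;=\; \sum_{\substack{k_i\le l_i\le k_{i+1},\ 1\le i\le n-1\\ l_1<l_2<\cdots<l_{n-1}}} \MT_{h-1}(\mathbf{l}_{n-1}),
\]
together with the weak-interlacing analogue $\GT_h(\mathbf{k}_n)=\sum_{k_i\le l_i\le k_{i+1}}\GT_{h-1}(\mathbf{l}_{n-1})$. Using the multiset identity \eqref{strictop}, the strict inequalities can be encoded by applying $\prod_{i=1}^{n-1}\st_{k_i^L,k_i^R}$ to the weakly-interlacing sum and then specializing $k_i^L=k_i^R=k_i$. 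Substituting the inductive formula for $\MT_{h-1}$ rewrites the right-hand side as
\[
  \prod_{i=1}^{n-1}\st_{k_i^L,k_i^R}\bigg|_{k_i^L=k_i^R=k_i}\;\sum_{\text{weak}}\;\prod_{1\le i<j\le n-1}\st^{-1}_{l_i,l_j}A_{l_i,l_j}\;\GT_{h-1}(\mathbf{l}_{n-1}),
\]
so the task is to commute the inner operators past the summation and the outer $\st$-operators in such a way that the summation collapses $\GT_{h-1}$ into $\GT_h(\mathbf{k}_n)$ and the surviving operators reassemble into $\prod_{1\le i<j\le n}\st^{-1}_{k_i,k_j}A_{k_i,k_j}$.

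By the symmetry of $A$ and the product structure, this reduces to a pairwise identity of formal power series in shift operators for each pair $(k_i,k_j)$. The key observation is that a telescoping sum $\sum_{l=x}^{y}$ acting on polynomials conjugates $\Delta_x$ essentially to $\iota(\Delta_x)=-\Delta_x/(1+\Delta_x)$, so pulling $\st^{-1}_{l_i,l_j}A_{l_i,l_j}$ through the sum produces an operator whose arguments are mixed via $\iota$; the functional equation \eqref{P} for $P(x,y)$ is precisely the compatibility that lets the resulting defect be absorbed back into $A_{k_i,k_j}$. The main obstacle will be verifying this commutation identity in closed form: the computation is long, and the non-uniqueness of $P$ (and hence of $A$) has to be accommodated throughout. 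A useful consistency check is the degenerate case $h=n-1$, where the monotone-triangle formula has no correction, so $A_{k_i,k_j}-\id$ must annihilate $\GT_{n-1}(\mathbf{k}_n)$; this is consistent with the classical description of the annihilator ideal (symmetric polynomials without constant term) discussed around \eqref{coinvar}.
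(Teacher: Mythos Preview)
Your inductive setup and the use of the recursion \eqref{MTrec} are correct, and they do match the paper's opening moves. The base case is slightly off---for $h=0$ both sides are the constant polynomial $1$, not a polynomial indicator of strict increase---but this is easy to fix. The real gap is in the inductive step.

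You assert that the task ``reduces to a pairwise identity of formal power series in shift operators for each pair $(k_i,k_j)$'' and that the functional equation \eqref{P} lets a commutation defect be ``absorbed back into $A_{k_i,k_j}$''. This is not what happens, and there is no such pairwise commutation identity available. After applying the recursion and telescoping (as in the proof of Theorem~\ref{GT}), one obtains $2^{n-1}$ terms indexed by sign patterns. In the GT case, all but $n$ of these terms vanish individually because the discrete integral $S_h(\mathbf{l}_{n-1})$ vanishes when two consecutive $l$'s coincide. In the MT case, the operator $\prod U_{l_i,l_j}$ destroys this vanishing: when one decomposes into symmetric and antisymmetric parts in $l_p,l_{p+1}$, the antisymmetric part still dies, but the symmetric part produces a linear combination of $\sgt_g(l_p,l_{p+1})$ for $0\le g\le h$, and the $g=0$ term does not vanish on the diagonal. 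Thus individual terms survive, and no local commutation can repair this.

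The actual role of the hidden series $A(x,y)$ is captured in Definition~\ref{def:annihilating}: it must satisfy the two identities
\[
A_{x,y}\,\sgt_h(x,y)=\sgt_h(x,y)
\quad\text{and}\quad
U_{x_1,x_2}U_{y_1,x_2}U_{x_1,y_2}U_{y_1,y_2}\,\sgt_{h_1}(x_1,y_1)\sgt_{h_2}(x_2,y_2)=\sgt_{h_1}(x_1,y_1)\sgt_{h_2}(x_2,y_2),
\]
which are \emph{not} pairwise in the $k_i$'s but rather mix four operators across two $\sgt$-factors. These conditions allow one to simplify the $2^{n-1}$ terms (Lemma~\ref{pq}) and then regroup them by the sequence $\mathbf{q}_{m-1}$ (Lemma~\ref{regrouping}). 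After this, two genuinely new ingredients are needed that have no analogue in the monotone-triangle proof: Lemma~\ref{one}, which uses Lemma~\ref{fund} (the identity $S(\Delta_{\mathbf{k}_n})\overline{\GT}_h=S(\iota(\Delta_{\mathbf{k}_n}))\overline{\GT}_h$ for symmetric $S$) to handle the $m=1$ group; and Lemma~\ref{zero}, whose proof requires an additive decomposition of $\overline{\GT}_h(\mathbf{k}_n)$ along a half-integer cut $q$ (Section~\ref{additive}) together with a delicate degree bound in an auxiliary variable $z$ (Lemma~\ref{signreversing}) to show the $m\ge 2$ groups vanish. None of these steps are visible from a commutation-of-operators viewpoint, and the functional equation \eqref{P} enters only through the characterization of annihilating series in Proposition~\ref{hiddencharacterize}, not as a commutation compatibility.
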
 

The formula has to be applied as follows: first note that $\GT_h(\mathbf{k}_n)$ is clearly a polynomial in $k_1,\ldots,k_n$. We need to apply the operator $\prod_{1 \le i < j \le n} \st^{-1}_{k_i,k_j} A_{k_i,k_j}$ to this polynomial and only then specialize $k_1,\ldots,k_n$ to specific integers values.

\begin{remark}\label{rem:Strict_from_denom_to_numer}
Note that when choosing $A(x,y) =  (1- x y) \frac{P(x,y) P(y,x)}{P(\iota(x),y) P(\iota(y),x)}$ with $P(x,y) = (1+x+ x y) Q(x,y)$ and $Q(x,y)$ as given in \eqref{Q} then this somewhat removes $\st_{k_i,k_j}$ from the denominator and replaces it by $\st_{k_j,k_i}$ in the numerator. Clearly, we have 
$
\st_{k_i,k_j} = \e_{k_i}^{-1} (\id + \Delta_{k_i} + \Delta_{k_i} \Delta_{k_j}), 
$
so that 
$$
\st^{-1}_{k_i,k_j} A_{k_i,k_j} = \e_{k_i}  \e_{k_j} \frac{\st_{k_j,k_i}}{1 + \delta_{k_i}+ \delta_{k_j}}  
\frac{Q(\Delta_{k_i},\Delta_{k_j}) Q(\Delta_{k_j},\Delta_{k_i})}{Q(\delta_{k_i},\Delta_{k_j}) Q(\delta_{k_j},\Delta_{k_i})}
$$
using $\iota(\Delta_x) = \delta_{x}$, where $\delta_x \coloneq \e^{-1}_x - \id$ is the \Dfn{backward difference operator}\footnote{Note that the backward difference is often also defined as $\id - \e^{-1}_x$.}.
\end{remark}

\begin{example}
\label{weareinbusiness}
We provide a few examples and discuss why it was difficult to find the formula in Theorem~\ref{main} by a guess-and-prove approach.
For instance, we have 
\begin{multline*}
\MT_1(\mathbf{k}_4) =
k_3 k_2^2-k_4 k_2^2-k_2^2-k_3^2
   k_2+k_1 k_2-k_1 k_3 k_2+k_3
   k_2+k_1 k_4 k_2+k_3 k_4
   k_2-k_2+k_1
   k_3^2-k_3^2+k_3 \\ -k_1 k_4-k_1
   k_3 k_4+k_3 k_4-1. 
\end{multline*}
It is interesting to note that this agrees with $\prod_{1 \le i < j \le 4} \st^{-1}_{k_i,k_j}  \GT_1(\mathbf{k}_4)$, so the hidden formal power series does not play a role here. This is not anymore true for $h=1, n=5$, in which case we have the following:
\begin{multline*}
 \MT_1(\mathbf{k}_5) = k_4^2 k_2^2+k_3 k_2^2-k_3 k_4
   k_2^2+k_3 k_5 k_2^2-k_4 k_5
   k_2^2-k_5 k_2^2-k_3^2
   k_2-k_1 k_4^2 k_2-k_3 k_4^2
   k_2  -k_1 k_3 k_2 \\ +k_3^2 k_4
   k_2+k_1 k_3 k_4 k_2+k_4
   k_2-k_3^2 k_5 k_2+k_1 k_5
   k_2-k_1 k_3 k_5 k_2+k_3 k_5
   k_2+k_1 k_4 k_5 k_2+k_3 k_4
   k_5 k_2-k_5 k_2 -k_2 \\ +k_1
   k_3^2-k_3^2+k_1 k_4^2+k_1
   k_3 k_4^2-k_3 k_4^2+k_1+k_1
   k_3-k_1 k_3^2 k_4+k_3^2
   k_4-k_1 k_4-k_1 k_3
   k_4+k_4+k_1 k_3^2 k_5-k_3^2
   k_5+k_3 k_5 \\ -k_1 k_4 k_5 -k_1
   k_3 k_4 k_5+k_3 k_4
   k_5-k_5-1.
\end{multline*} 
For $h=2, n=4$, we have 
\begin{multline*}
\MT_2(\mathbf{k}_4) =
\frac{1}{12} (-k_3 k_2^4+k_4
   k_2^4+k_2^4-4 k_1 k_2^3+4 k_1
   k_3 k_2^3-4 k_3 k_2^3-4 k_1
   k_4 k_2^3 +4 k_4 k_2^3+4
   k_2^3  +3 k_1^2 k_2^2  -3 k_1
   k_3^2 k_2^2 \\ +9 k_3^2 k_2^2+3
   k_1 k_4^2 k_2^2-3 k_3 k_4^2
   k_2^2  -6 k_4^2 k_2^2+3 k_1
   k_2^2-3 k_1^2 k_3 k_2^2-8 k_3
   k_2^2+3 k_1^2 k_4 k_2^2+3
   k_3^2 k_4 k_2^2  +6 k_1 k_4
   k_2^2  -6 k_3 k_4 k_2^2 \\ -7 k_4
   k_2^2+5 k_2^2+k_3^4 k_2-4
   k_3^3 k_2-9 k_1^2 k_2+3 k_1^2
   k_3^2 k_2-6 k_1 k_3^2 k_2+8
   k_3^2 k_2-3 k_1^2 k_4^2 k_2+3
   k_3^2 k_4^2 k_2  +6 k_1 k_4^2
   k_2  +6 k_3 k_4^2 k_2 \\ +7 k_1
   k_2+6 k_1^2 k_3 k_2-k_1 k_3
   k_2-25 k_3 k_2-4 k_3^3 k_4
   k_2-12 k_1^2 k_4 k_2+13 k_1
   k_4 k_2+k_3 k_4 k_2+8 k_4
   k_2  +14 k_2  -k_1 k_3^4 \\ +k_3^4+4
   k_1 k_3^3-4 k_3^3-6 k_1^2
   k_3^2+7 k_1 k_3^2+5 k_3^2+3
   k_1^2 k_4^2-3 k_1 k_3^2
   k_4^2+3 k_3^2 k_4^2  -9 k_1
   k_4^2+3 k_1^2 k_3 k_4^2-12
   k_1 k_3 k_4^2  +9 k_3 k_4^2  \\ +8
   k_1 k_3 -14 k_3+4 k_1 k_3^3
   k_4-4 k_3^3 k_4+9 k_1^2 k_4-3
   k_1^2 k_3^2 k_4+6 k_1 k_3^2
   k_4-3 k_3^2 k_4-15 k_1 k_4+6
   k_1^2 k_3 k_4 \\ -13 k_1 k_3
   k_4+7 k_3 k_4).
\end{multline*}
This is again equal to $\prod_{1 \le i < j \le 4} \st^{-1}_{k_i,k_j}  \GT_2(\mathbf{k}_4)$. The same behavior is true for $\MT_2(\mathbf{k}_5)$, which has
already $315$ terms. However, we have 
$$\prod_{1 \le i < j \le n} \st^{-1}_{k_i,k_j}  \GT_2(\mathbf{k}_n) \not= \prod_{1 \le i < j \le n} \st^{-1}_{k_i,k_j} A_{k_i,k_j} \GT_2(\mathbf{k}_n)$$
for $n \ge 6$.

This indicates why it was difficult to find the hidden formal power series $A(x,y)$ empirically, since it does not make a difference in small examples; the computations 
are costly beyond the small examples especially for $h \ge 2$. 

At an earlier point in our studies, we did find empirically that $A(x,y) = 1 - \frac{3}{4} x^2 y^2$ works for $h=1$ and all $n$'s for which we could compute $\MT_1(\mathbf{k}_n)$
explicitly, but then we had to discover that this choice for $A(x,y)$ does not work for $h \ge 2$.

We then had to work with an approach that envisions a possible proof of the formula at a point where we did not have the formula and combine this with computer experiments. An important breakthrough happened when we realized that the hidden formal power series $A(x,y)$ we have found for those examples that we could handle with a computer (which were examples with $h=1,2$ and relatively small $n$) satisfy the two properties that are presented at the beginning of Section~\ref{neutralizing}. This was surprising for us as these properties seemed to be so restrictive that it was hard to believe that they admit any solution in general. However, in the end there are solutions. Luckily, these two properties also simplify the computation in the proof at an important point significantly. 
\end{example}

\bigskip

We argue that Theorem~\ref{main} implies the operator formula \eqref{operatorweyl} for the number of monotone triangles with bottom row $(k_1,\ldots,k_n)$.
We need to consider the case $h=n$ of Theorem~\ref{main}.  It suffices to show the following two assertions. For any positive integer $n$, we have 
$$
\pf  \left( 
\begin{array}{c|c}  
\TGT_{n}(\mathbf{k\!\uparrow}_n) & \RGT_{n,n}(\mathbf{k\!\uparrow}_n) \\ \hline 
                                                     & \T(0)_n 
\end{array} \right)
= \prod_{1 \le i < j \le n} \frac{k_j-k_i + j - i}{j-i},
$$
and, for any symmetric power series $P(X_1,\ldots,X_n)$, we have  
$$
P(\Delta_{k_1},\ldots,\Delta_{k_n}) \prod_{1 \le i < j \le n} \frac{k_j-k_i + j - i}{j-i} = P(0,\ldots,0) \prod_{1 \le i < j \le n} \frac{k_j-k_i + j - i}{j-i}.
$$
The latter is then applied to $P(X_1,\ldots,X_n) = \prod_{1 \le i < j \le n} A(X_i,X_j)$, noting also that $A(0,0)=1$, which follows from 
Lemma~\ref{characterize} \eqref{characterize:1} and $\iota(0)=0$.  

The second assertion is Lemma~1 in \cite{Fis07}. For the first assertion note that due to the block form of the triangular array underlying the Pfaffian
only matchings where all $j$ with $j \in \{n+1,\ldots,2n\}$ are matched to an $i$ with $i \in [n]$ contribute a non-zero summand, which eventually implies that the Pfaffian is equal to 
$$
(-1)^{\binom{n}{2}} \det  \left( \RGT_{n,n}(\mathbf{k\!\uparrow}_n) \right) = 
(-1)^{\binom{n}{2}} \det_{1 \le i, j \le n} \left( \binom{k_i}{n-j} \right).
$$
Using the Vandermonde determinant evaluation,
the latter evaluates to $\prod_{1 \le i < j \le n} \frac{k_j-k_i + j - i}{j-i}$. 

\medskip

The second assertion is related to the coinvariant algebra as pointed out next. It follows from a result of Steinberg \cite{steinberg} that the ideal $\mathcal{I}_n$ of all polynomials $R(X_1,\ldots,X_n)$
with 
$$
R(\Delta_{k_1},\ldots,\Delta_{k_n}) \prod_{1 \le i < j \le n} \frac{k_j-k_i + j - i}{j-i} = 0
$$
is generated by the symmetric polynomials without constant term, and, consequently, it follows from the fundamental theorem of symmetric polynomials that it is generated by the elementary symmetric polynomials $e_i(X_1,\ldots,X_n)$ with $i \ge 1$\footnote{We are using the fact that $\Delta_x$ has the same effect on the binomial basis $\binom{x}{n}$ of polynomials in $x$ as $\partial_x$ on the basis $\frac{x^n}{n!}.$}.
Recalling that the \Dfn{coinvariant algebra} of the symmetric group is the quotient 
$$
\mathcal{R}_n=\mathbb{Q}[X_1,\ldots,X_n] / \mathcal{I}_n,
$$
this implies that $\mathcal{R}_n$
is isomorphic to 
$$
\left\{ R(\Delta_{k_1},\ldots,\Delta_{k_n}) \prod_{1 \le i < j \le n} \frac{k_j-k_i + j - i}{j-i} \,\middle\vert \, R(X_1,\ldots,X_n) \in \mathbb{Q}[X_1,\ldots,X_n] \right\}
$$
as $\mathcal{S}_n$-modules. The coinvariant algebra is a well-understood object, see for instance \cite{stan79,bergeron}. 

An obvious question is whether there is a similar story when replacing $\GT_{n}(\mathbf{k}_n)$ 
by $\GT_{h}(\mathbf{k}_n)$. In Section~\ref{sec:conjecture}, we provide a conjecture on a generating set
of the ideal 
$\mathcal{I}_{h,n}$ of polynomials $R(X_1,\allowbreak \ldots,\allowbreak X_n)$ satisfying 
$
R(\Delta_{k_1},\ldots,\Delta_{k_n}) \GT_{h}(\mathbf{k}_n)=0.
$

\bigskip

{\bf Applications and future directions.} The operator formula for monotone triangles \eqref{operatorweyl} has been the key ingredient for numerous results on alternating sign matrices and related objects. To name a few, it has led to another proof of the alternating sign matrix theorem \cite{Fis07} and it was also fundamental for the first (complicated) bijective proof of this theorem \cite{PartI,cube}. It has been used to establish the connection between alternating sign trapezoids and certain holey lozenge tilings that generalize descending plane partitions \cite{fourfold} as well as to establish the connection between alternating sign triangles and totally symmetric self-complementary plane partitions \cite{gangl}. A multivariate generalization of the formula \cite{nASMDPP} that includes Schur polynomials and Hall--Littlewood polynomials as special cases was used to give an $(n+3)$-parameter refinement for the equinumerosity of alternating sign matrices and descending plane partitions. Thus it is expected that the more general operator formula for monotone trapezoids will also have numerous applications, some of which generalize the results mentioned above. 

Of particular interest is the fact that our new formula involves alternating sign matrix objects with free boundary, which is a new feature for a formula in this area. That is likely to lead to new types of results. One concrete direction we have in mind, which was also our motivation to work out the formula, is related to Littlewood-type identities that involve alternating sign matrix objects.

\bigskip

{\bf Outline of the paper.} 
Section~\ref{GTproof} is devoted to the proof of Theorem~\ref{GT}. Although the result is a straightforward application of a result of \cite{ste90}, we present an alternative proof as a warm-up for the proof of Theorem~\ref{main}. 
The proof of the main result (Theorem~\ref{main}) is very involved, and thus we provide a sketch highlighting the basic steps of the proof in Section~\ref{sketch}. The details of the proof are then presented in Sections~\ref{neutralizing}--\ref{sec:second}. 
More concretely, in Section~\ref{neutralizing}, we provide at the beginning  the crucial properties of the hidden formal power series and then derive a nice characterization of the hidden formal power series, which implies in particular their existence.
In Section~\ref{sec:first}, we present the first part of the proof of Theorem~\ref{main}. This is the part that follows the structure of the proof of the simpler Theorem~\ref{GT} and reduces the result to two lemmas (Lemma~\ref{one} and Lemma~\ref{zero}). There is no component  in the proof of Theorem~\ref{GT} that is equivalent to these lemmas. The two lemmas are then proved in Section~\ref{sec:second}. This is by far the most complicated part of the proof. 
In Section~\ref{sec:conjecture}, we present the conjecture on the annihilator ideal of $\GT_h(\mathbf{k}_n)$ that has relations to the coinvariant algebra.

\section{Proof of Theorem~\texorpdfstring{\ref{GT}}{\ref*{GT}}}
\label{GTproof} 

We first consider the case $h+n$ is even. The proof is by induction with respect to $h$ and we first check the base case $h=0$ of the induction. 
Concerning the left-hand side, note that $\GT_0(\mathbf{k}_n)=1$. As for the right-hand side, 
the claimed Pfaffian in Theorem~\ref{GT} has all entries equal to $1=\gt_0(k_i+i,k_j+j)$. It suffices to show the following.

\begin{lemma}
We have 
$
\pf_{1 \le i < j \le 2m} \left( 1 \right) = 1.
$
\end{lemma}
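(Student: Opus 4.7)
The plan is to prove this by straightforward induction on $m$. The base case $m=1$ is immediate from the definition: $[2]=\{1,2\}$ admits only the matching $\pi=\{(1,2)\}$ with $\sgn\pi = +1$, and the associated summand is $A_{1,2}=1$.

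For the induction step, I would invoke the standard row-expansion of the Pfaffian along index $1$,
$$
\pf_{1\le i<j\le 2m}(A_{i,j}) \;=\; \sum_{j=2}^{2m} (-1)^{j} A_{1,j}\,\pf\bigl(A_{\widehat 1,\widehat j}\bigr),
$$
where $A_{\widehat 1,\widehat j}$ is obtained by deleting the rows and columns indexed by $1$ and $j$. This expansion follows directly from grouping the matchings in the Pfaffian sum according to the partner of $1$ and checking the sign of the induced permutation. With $A_{i,j}\equiv 1$, each minor is again an all-ones upper triangular array, now of order $2m-2$, so the induction hypothesis yields $\pf(A_{\widehat 1,\widehat j})=1$ for every $j$.

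The lemma then reduces to the elementary evaluation
$$
\sum_{j=2}^{2m}(-1)^{j} \;=\; 1,
$$
which holds because the sum contains $2m-1$ alternating terms, starting and ending with $+1$. There is essentially no obstacle here; the only point requiring care is fixing the sign convention in the Pfaffian row-expansion, which can be double-checked against the cases $m=1,2$ from the raw definition of $\pf$. (One could alternatively square the Pfaffian and evaluate $\det(A)$ for the skew-symmetric matrix with $A_{i,j}=\sgn(j-i)$ by row reduction, but the inductive route above is shorter and better suited as a warm-up for the more intricate Pfaffian arguments appearing later in the paper.)
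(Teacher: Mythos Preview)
Your proof is correct. The induction via Laplace expansion along index $1$ works exactly as you describe, and the sign convention you use matches the one the paper states later in \eqref{expansion}: for $p=1$ that formula reads $\sum_{q=2}^{2m}(-1)^{q}A_{1,q}\pf(A_{\widehat1,\widehat q})$, and the alternating sum $\sum_{j=2}^{2m}(-1)^j=1$ finishes it.

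The paper takes a different route: rather than induction, it exhibits a sign-reversing involution on all matchings other than the ``identity'' matching $\{(1,2),(3,4),\ldots,(2m-1,2m)\}$. Given a non-identity matching $\pi$, one locates the smallest odd $2i-1$ not paired with $2i$ and swaps the partners of $2i-1$ and $2i$; this is an involution that flips the sign. Your argument is more algebraic and anticipates the Laplace expansion \eqref{expansion} that the paper only introduces a page later; the paper's argument is purely combinatorial and self-contained at this point. Both are equally short, and neither has any real advantage for this particular lemma --- though the involution viewpoint foreshadows the symmetric/antisymmetric cancellation arguments used repeatedly in the proofs of Lemma~\ref{gt-step} and Lemma~\ref{pq}.
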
 

\begin{proof} 
It suffices to find a sign-reversing involution on the matchings $\pi$ of $[2m]$ different from $(1,2),(3,4),\ldots,(2m-1,2m)$. 
Let $2i-1$ be the minimal odd integer that is not matched with $2i$. Assume that $2i-1$ is matched with $p$ and $2i$ is matched with $q$. Construct $\pi'$ from $\pi$ by matching $2i-1$ with $q$ and matching $2i$ with $p$, and leaving $\pi$ unchanged otherwise. This is clearly a sign-reversing involution.
\end{proof} 

In order to perform the induction step, we use a recursion for $\GT_h(\mathbf{k}_n)$: if we delete the bottom row of 
an $(h,n)$-GT trapezoid with $h>0$ and bottom row $\mathbf{k}_n$, we obtain an $(h-1,n-1)$-GT trapezoid with, say, bottom row
$\mathbf{l}_{n-1}$ such that $\mathbf{l}_{n-1}$ \Dfn{interlaces} $\mathbf{k}_n$, that is, 
\begin{equation*} 
\label{interlaces}
k_1 \le l_1 \le k_2 \le l_2 \le k_3 \le \ldots \le l_{n-1} \le k_n,
\end{equation*} 
which we denote by $\mathbf{l}_{n-1} \preceq  \mathbf{k}_{n}$.
This implies the following recursion:
\begin{equation}
\label{GTrec1}  
\GT_h(\mathbf{k}_n) = \sum_{\mathbf{l}_{n-1} \preceq  \mathbf{k}_{n}}  \GT_{h-1}(\mathbf{l}_{n-1}).
\end{equation} 
In the special case $n=1$, the recursion acts like the identity.

Using the transformations $\mathbf{k}_n \mapsto \mathbf{k}_n - \mathbf{0\!\uparrow}_n$ and $\mathbf{l}_{n-1} \mapsto \mathbf{l}_{n-1} - \mathbf{0\!\uparrow}_{n-1}$, where 
$\mathbf{0\!\uparrow}_n \coloneq (1,2,\ldots,n)$, 
the former recursion is equivalent to the following
\begin{equation}
\label{GTrec}
\GT_h(\mathbf{k}_n-\mathbf{0\!\uparrow}_n) = \sum_{\mathbf{l}_{n-1} \prec  \mathbf{k}_{n}}
\GT_{h-1}(\mathbf{l}_{n-1}-\mathbf{0\!\uparrow}_{n-1}),
\end{equation} 
where $\mathbf{l}_{n-1} \prec  \mathbf{k}_{n}$ stands for 
$$
k_1 \le l_1 < k_2 \le l_2 < k_3 \le \ldots \le l_{n-1} < k_n.
$$
It follows that it suffices to prove the following lemma.

\begin{lemma} 
\label{gt-step}
Let $h$ and $n$ be positive integers with $n > h$. Then
$$ 
\sum_{\mathbf{l}_{n-1} \prec \mathbf{k}_n}
  \pf \left( 
\begin{array}{c|c}  \TGT_{h-1}(\mathbf{l}_{n-1}) &  \RGT_{h-1,h-1}(\mathbf{l}_{n-1})  \\ \hline 
& \T(0)_{h-1} \end{array} \right)  = 
\pf \left( 
\begin{array}{c|c}  \TGT_{h}(\mathbf{k}_{n}) & \RGT_{h,h}(\mathbf{k}_{n}) \\ \hline 
&  \T(0)_{h} \end{array} \right). 
$$ 
\end{lemma}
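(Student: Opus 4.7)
The plan is to push the summation over $\mathbf{l}_{n-1}\prec \mathbf{k}_n$ inside the Pfaffian on the left-hand side, reducing the identity to an equality of two Pfaffians of sizes $n+h-2$ and $n+h$; then to conjugate the right-hand side by a unitriangular matrix so that a single Laplace expansion along its last column brings it into the same form.

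For the first step, I would observe that the Pfaffian is a sum over perfect matchings of signed products of entries, and that each variable $l_i$ (for $1\le i\le n-1$) appears only in entries involving the index $i$; within any fixed matching the index $i$ lies in exactly one pair, so the corresponding summation over $l_i$ affects exactly one factor of the product. Because the ranges $k_i\le l_i\le k_{i+1}-1$ are independent across $i$, the summation therefore factorises across the pairs of each matching. The result is a Pfaffian $\pf(\widetilde A)$ of size $n+h-2$ with entries
$$\widetilde A_{i,j} = \sum_{l_i=k_i}^{k_{i+1}-1}\sum_{l_j=k_j}^{k_{j+1}-1}\gt_{h-1}(l_i,l_j) \quad \text{and} \quad \widetilde A_{i,n-1+m} = \sum_{l_i=k_i}^{k_{i+1}-1}\binom{l_i}{h-1-m}$$
in the TGT and RGT blocks respectively, and zero elsewhere.

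The key analytical ingredient, which I would verify directly from the definitions, is the identity $\Delta_x\Delta_y\gt_h(x,y)=\gt_{h-1}(x,y)$. Applying $\Delta_y$ term-wise is routine via $\Delta\binom{y+c}{h}=\binom{y+c}{h-1}$; applying $\Delta_x$ is subtler because $x$ appears both in the summand and in the upper summation bound, producing a boundary contribution that evaluates to $\binom{y-x+h-2}{h-1}$. After reindexing $p\mapsto p-1$ in the bulk sum, this boundary term provides precisely the missing $p=x-1$ summand needed to reproduce $\gt_{h-1}(x,y)$. Together with the hockey-stick identity $\sum_{l=a}^{b-1}\binom{l}{r}=\binom{b}{r+1}-\binom{a}{r+1}$, this yields $\widetilde A_{i,j}=\gt_h(k_{i+1},k_{j+1})-\gt_h(k_i,k_{j+1})-\gt_h(k_{i+1},k_j)+\gt_h(k_i,k_j)$ and $\widetilde A_{i,n-1+m}=\binom{k_{i+1}}{h-m}-\binom{k_i}{h-m}$.

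For the final step, let $B$ be the $(n+h)$-dimensional skew-symmetric matrix underlying the right-hand Pfaffian, and let $P$ be the $(n+h)\times(n+h)$ lower-unitriangular matrix which subtracts row $i-1$ from row $i$ for $2\le i\le n$ and is the identity on rows $n+1,\ldots,n+h$. Then $\pf(PBP^T)=\pf(B)$ since $\det P=1$, and the entries of $C:=PBP^T$ with $i,j\in\{2,\ldots,n\}$ or $i\in\{2,\ldots,n\},\ j\in\{n+1,\ldots,n+h\}$ are exactly the double or single differences appearing in $\widetilde A$ after the shift $i\mapsto i-1$. Crucially, in column $n+h$ (where $m=h$) every entry with row index $\ge 2$ vanishes since $\binom{k_i}{0}-\binom{k_{i-1}}{0}=0$, while $C_{1,n+h}=\binom{k_1}{0}=1$; expanding $\pf(C)$ along this column thus leaves only the term pairing $1$ with $n+h$, and direct identification of the resulting sub-Pfaffian with $\pf(\widetilde A)$ completes the argument. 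The main obstacle I anticipate is the verification of $\Delta_x\Delta_y\gt_h=\gt_{h-1}$, since the moving summation bound forces the $\Delta_x$ boundary contribution to be matched exactly with a specific tail term of the $\gt_{h-1}$ sum, which requires careful bookkeeping of two parallel expansions.
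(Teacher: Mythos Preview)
Your proof is correct and takes a genuinely different route from the paper's. The paper constructs a discrete antiderivative $S_h(\mathbf{l}_{n-1})$ of the summand, telescopes to $2^{n-1}$ boundary terms, and then shows that $S_h$ vanishes whenever two consecutive $l$'s coincide by decomposing $S_h$ into a part antisymmetric in $l_p,l_{p+1}$ and a part proportional to $\sgt_h(l_p,l_{p+1})$; this kills all but $n$ terms, which are then collected via Laplace expansion. You instead exploit the multilinearity of the Pfaffian to push the $(n{-}1)$-fold sum inside (each $l_i$ touches exactly one factor per matching), obtaining a single $(n{+}h{-}2)$-Pfaffian with second-difference entries, and then match it to the right-hand side by a unitriangular conjugation followed by one Laplace expansion along the last column (where all entries but the first vanish because $\binom{k_i}{0}-\binom{k_{i-1}}{0}=0$, and the remaining sign $(-1)^{n+h}$ is $+1$ in the even-parity context of the lemma).

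Your argument is slicker for this particular lemma and is close in spirit to the Stembridge-type approach the paper alludes to. What the paper's longer route buys is that the symmetric/antisymmetric decomposition of $S_h$ is precisely the template reused in the proof of the main Theorem~\ref{main}: there the operator $\prod_{i<j}U_{l_i,l_j}$ entangles the $l_i$'s, so one can no longer push the sum inside the Pfaffian, but the decomposition into $S_h^{p,\asym}$ and $S_h^{p,\sym}$ still survives (Lemma~\ref{asym}) and drives the whole analysis. In other words, the paper's proof of Lemma~\ref{gt-step} is deliberately chosen as a warm-up for the harder case, whereas your more direct argument, while perfectly valid here, would not generalize to the monotone-trapezoid setting.
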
 

We use the following two properties of $\gt_h(x,y)$ in the proof: First, for $h>0$, we have 
\begin{equation}
\label{diffgt} 
\Delta_x \Delta_y \gt_h(x,y) = \gt_{h-1}(x,y),  
\end{equation} 
which follows by applying 
\begin{equation} 
\label{diffbinom} 
\Delta_z \binom{z}{m} = \binom{z}{m-1} 
\end{equation} 
several times. Second, if we symmetrize $\gt_h(x,y)$, then we have  
\begin{multline} 
\label{sym} 
\frac{\gt_h(x,y) + \gt_h(y,x)}{2} \\ 
=\sum_{p=y}^{x-1} \left( \binom{x-p+h-1}{h} \binom{y-p+h-2}{h} - \binom{x-p+h-2}{h} \binom{y-p+h-1}{h}  \right) \\ = 
(-1)^h \frac{1}{(2h)!} (x-y-h+1)_{2h-1} (x-y), 
\end{multline} 
using Pochhammer notation 
$$
(a)_n \coloneq a(a+1) \dots (a+n-1)
$$
and 
where the last step in \eqref{sym} follows essentially from the Chu--Vandermonde summation. We set 
\begin{equation}
\label{sgtdef}  
\sgt_h(x,y) \coloneq (-1)^h \frac{1}{(2h)!} (x-y-h+1)_{2h-1} (x-y),
\end{equation} 
where $\sgt$ indicates that we have extracted the symmetric part of $\gt_h(x,y)$.

\begin{proof}[Proof of Lemma~\ref{gt-step}]
To simplify the notation, we denote the summand by  
\begin{equation} 
\label{GTsummand} 
s_{h}(\mathbf{l}_{n-1}) \coloneq   \pf \left( 
\begin{array}{c|c}  \TGT_{h-1}(\mathbf{l}_{n-1}) &  \RGT_{h-1,h-1}(\mathbf{l}_{n-1})  \\ \hline 
& \T(0)_{h-1} \end{array} \right)
\end{equation} 
and set 
\begin{equation} 
\label{GTintegral} 
S_{h}(\mathbf{l}_{n-1}) \coloneq   \pf \left( 
\begin{array}{c|c}  \TGT_{h}(\mathbf{l}_{n-1}) &  \RGT_{h,h-1}(\mathbf{l}_{n-1})  \\ \hline 
& \T(0)_{h-1} \end{array} \right).
\end{equation}
By \eqref{diffgt}, \eqref{diffbinom} and the definition of the Pfaffian, we have 
$
\Delta_{\mathbf{l}_{n-1}} S_{h}(\mathbf{l}_{n-1}) = s_{h}(\mathbf{l}_{n-1}).
$
By telescoping, that is, we use 
\begin{equation}
\label{telescoping} 
\sum_{l_i=k_i}^{k_{i+1}-1} \Delta_{l_i} F(l_i) = F(k_{i+1}) - F(k_i),
\end{equation}
we can conclude that 
 \begin{equation}
 \label{gtsum}
 \sum_{\mathbf{l}_{n-1} \prec \mathbf{k}_n} \Delta_{\mathbf{l}_{n-1}} S_{h}(\mathbf{l}_{n-1})
\end{equation} 
can be written as $2^{n-1}$ expressions where for each $l_i$, $i=1,2,\ldots,n-1$, we choose either the lower or upper bound 
in the summation $\sum_{l_i=k_i}^{k_{i+1}-1}$ and for every choice of the lower bound we have a contribution of $-1$ to the sign. 
After showing that 
\begin{equation}
\label{zero1}  
S_{h}(\mathbf{l}_{n-1})=0,
\end{equation} 
if $l_{p}=l_{p+1}$ for some $p<n-1$, 
we can conclude that \eqref{gtsum} is equal to 
\begin{equation}
\label{reduced} 
\sum_{r=1}^n (-1)^{r-1} S_{h}(\mathbf{k}_{n}^r), 
\end{equation} 
where
\begin{equation} 
\label{ignore}
\mathbf{k}^r_{n} \coloneq (k_1,\ldots,k_{r-1},k_{r+1},\ldots,k_n).
\end{equation} 

In order to show \eqref{zero1}, we use a formula that will be employed later on as well. First note that, by definition, the Pfaffian 
of a triangular array $(A_{i,j})_{1 \le i < j \le 2m}$ is linear in the vector that is obtained by concatenating the $p$-th column and the $p$-th row 
of the triangular array, that is, 
$$
(A_{1,p},A_{2,p},\ldots,A_{p-1,p},A_{p,p+1},\ldots,A_{p,2m}).
$$ 
To show \eqref{zero1}, we decompose 
$$
(A_{1,p},A_{2,p},\ldots,A_{p-1,p},A_{p,p+1},A_{p,p+2},\ldots,A_{p,n+h-2})
$$
of the triangular array underlying the Pfaffian $S_{h}(\mathbf{l}_{n-1})$ as follows: It holds $A_{p,p+1}=\gt_h(l_p,l_{p+1})$ and let the first vector be
$$
(A_{1,p},A_{2,p},\ldots,A_{p-1,p},\frac{\gt_h(l_p,l_{p+1})-\gt_h(l_{p+1},l_p)}{2},A_{p,p+2},\ldots,A_{p,n+h-2})
$$
and the second vector be 
$$
(0,0,\ldots,0,\frac{\gt_h(l_p,l_{p+1})+\gt_h(l_{p+1},l_p)}{2},0,\ldots,0)=(0,0,\ldots,0,\sgt_h(l_p,l_{p+1}),0,\ldots,0).
$$
We let 
$S^{p,\asym}_{h}(\mathbf{l}_{n-1})$ and $S^{p,\sym}_{h}(\mathbf{l}_{n-1})$ denote the two Pfaffians obtained this way, and we can conclude 
\begin{equation}\label{decompose}
S_{h}(\mathbf{l}_{n-1}) = S^{p,\asym}_{h}(\mathbf{l}_{n-1})+S^{p,\sym}_{h}(\mathbf{l}_{n-1}).
\end{equation}

Now we argue that $S^{p,\asym}_{h}(\mathbf{l}_{n-1})$ is antisymmetric in $l_{p}$ and $l_{p+1}$. Indeed, when expanding $S^{p,\asym}_{h}(\mathbf{l}_{n-1})$ as a sum over all matchings, and when considering a summand that corresponds to a matching where $p$ is not matched to $p+1$, we can pair this summand off with the one obtained by interchanging the partners of $p$ and $p+1$ in the matching to obtain an antisymmetric expression; 
if, on the other hand, we consider a summand where $p$ is matched to $p+1$, then $\frac{1}{2}(\gt_h(l_p,l_{p+1})-\gt_h(l_{p+1},l_p))$ appears as a factor and this factor is the only contribution of $l_p,l_{p+1}$, and thus this summand is antisymmetric.

The antisymmetry of $S^{p,\asym}_{h}(\mathbf{l}_{n-1})$ implies its vanishing whenever $l_p=l_{p+1}$. On the other hand, 
when expanding $S^{p,\sym}_{h}(\mathbf{l}_{n-1})$ as a sum over matchings, we only have a non-zero contribution if $p$ is matched to $p+1$, and this pair contributes the factor $\sgt_h(l_p,l_{p+1})$, which vanishes whenever $l_p=l_{p+1}$ by \eqref{sym}.

The fact that \eqref{reduced} is equal to the right-hand side in the statement follows from the Laplace expansion formula for Pfaffians, that is for 
$p \in [2m]$ we have 
\begin{equation}
\label{expansion}  
\pf_{1 \le i < j \le 2m} \left( A_{i,j} \right) = 
\sum_{q=1}^{p-1} (-1)^{p+q+1} A_{q,p} \pf_{1 \le i < j \le 2m \atop i,j \not= p,q} \left( A_{i,j} \right) \\
+ \sum_{q=p+1}^{2n} (-1)^{p+q+1} A_{p,q} \pf_{1 \le i < j \le 2m \atop i,j \not= p,q} \left( A_{i,j} \right),
\end{equation} 
when applying the formula to the last column of the triangular array on the right-hand side of the statement.
\end{proof} 

Finally we consider the case $n+h$ is odd of Theorem~\ref{GT}. Again we use induction with respect to $h$ and for $h=0$ there is nothing to prove since 
$$
\GT_0(\mathbf{k}_n) = 1 = \GT_0(\mathbf{k}_{n+1}) = \Delta^0_{k_{n+1}} \GT_0(\mathbf{k}_{n+1}).
$$
For the induction step note that we can also use the recursion as it holds for this parity too.
$$
\GT_h(\mathbf{k}_n) = \sum_{\mathbf{l}_{n-1} \preceq \mathbf{k}_{n}} \GT_{h-1}(\mathbf{l}_{n-1})
$$
By the induction hypothesis, this is further equal to 
$$
\sum_{\mathbf{l}_{n-1} \preceq \mathbf{k}_{n}} \Delta^{h-1}_{l_n} \GT_{h-1}(\mathbf{l}_{n}).
$$
This expression is independent of $l_n$, and thus we may in particular replace $l_n$ by $k_{n+1}$ and obtain 
$$
\sum_{\mathbf{l}_{n-1} \preceq \mathbf{k}_{n}}  \Delta^{h-1}_{k_{n+1}} \GT_{h-1}(l_1,\ldots,l_{n-1},k_{n+1}).
$$
As 
\begin{equation}
\label{eat1} 
\GT_{h-1}(l_1,\ldots,l_{n-1},k_{n+1}) = \e^{-1}_{k_{n+1}}\Delta_{k_{n+1}} \sum_{l_n=k_n}^{k_{n+1}}  \GT_{h-1}(\mathbf{l}_n),
\end{equation} 
we see that this is further equal to 
$$
\e^{-1}_{k_{n+1}} \Delta^{h}_{k_{n+1}} \sum_{\mathbf{l}_{n} \preceq \mathbf{k}_{n+1}} 
\GT_{h-1}(\mathbf{l}_n) = \e^{-1}_{k_{n+1}} \Delta^{h}_{k_{n+1}} \GT_h(\mathbf{k}_{n+1})
$$
Since this expression is equal to $\GT_{h}(\mathbf{k}_n)$, it is in particular
independent of $k_{n+1}$, and thus we can also apply $\e_{k_{n+1}}$ without causing any change and we obtain the claimed expression.

In \eqref{eat1}, we applied a special case of Lemma~\ref{eat}, stated below. For this purpose, we extend the notation $\mathbf{k}_n$ as follows: for two integers $i,j$, we set 
$$
\mathbf{k}_{i,j} \coloneq (k_i,k_{i+1},\ldots,k_j).
$$
If $j<i$, then $\mathbf{k}_{i,j}$ is defined to be the empty vector.

\begin{lemma} 
\label{eat}
Let $n$ be a positive integer, $a(\mathbf{l}_{n-1})$ a function, and $i$ and $j$ non-negative integers with $0 \le i < j$ and $j \le n+1$. Then 
$$
(-1)^{i} \Delta_{\mathbf{k}_{1,i}} \Delta_{\mathbf{k}_{j,n}} \left[ \sum_{\mathbf{l}_{n-1} \prec \mathbf{k}_n} a(\mathbf{l}_{n-1}) \right] =
\sum_{\mathbf{l}_{i+1,j-2} \prec \mathbf{k}_{i+1,j-1}} a(\mathbf{k}_{1,i},\mathbf{l}_{i+1,j-2},\mathbf{k}_{j,n}).
$$
\end{lemma}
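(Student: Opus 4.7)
The plan is to apply the difference operators $\Delta_{k_r}$ iteratively, from the outside inward, using two elementary identities. If $k_r$ appears only in the lower bound of the $l_r$-summation, then
$$
\Delta_{k_r} \sum_{l_r = k_r}^{k_{r+1}-1} F(l_r) = -F(k_r),
$$
and if $k_r$ appears only in the upper bound of the $l_{r-1}$-summation, then
$$
\Delta_{k_r} \sum_{l_{r-1}=k_{r-1}}^{k_r-1} F(l_{r-1}) = F(k_r).
$$
Both follow immediately from $\Delta_x f(x) = f(x+1) - f(x)$, noting that $F$ does not depend on $k_r$ in either case.

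Since the operators $\Delta_{k_r}$ commute, I would carry out the computation in two phases. In the first phase, I apply the right-end operators in the order $\Delta_{k_n}, \Delta_{k_{n-1}}, \ldots, \Delta_{k_j}$. When $\Delta_{k_r}$ acts in this sequence, the $l_r$-summation has already been removed by $\Delta_{k_{r+1}}$ in the previous step, so $k_r$ occurs only as the upper bound of the surviving $l_{r-1}$-summation, and the second identity above collapses that summation to $l_{r-1} = k_r$ with no sign cost. After all such applications, the expression has become
$$
\sum_{l_1 = k_1}^{k_2 - 1} \cdots \sum_{l_{j-2} = k_{j-2}}^{k_{j-1}-1} a\bigl(l_1, \ldots, l_{j-2}, k_j, k_{j+1}, \ldots, k_n\bigr).
$$

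In the second phase, I apply the left-end operators in the order $\Delta_{k_1}, \Delta_{k_2}, \ldots, \Delta_{k_i}$. By the same edge-condition argument, when $\Delta_{k_r}$ acts, the $l_{r-1}$-summation has already been collapsed, so $k_r$ appears only in the lower bound of the $l_r$-summation, and the first identity applies with a factor of $-1$, substituting $l_r = k_r$. The $i$ sign contributions combine to $(-1)^i$, which cancels the prefactor on the left-hand side, leaving precisely the claimed expression on the right.

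The argument involves no substantive obstacle; the only point requiring care is to check the edge condition at every step, so that $k_r$ sits in exactly one summation bound at the moment $\Delta_{k_r}$ is applied. This is the reason for the outside-in ordering: by collapsing sums from the two ends first and working inward, each successive difference operator sees a clean edge configuration and contributes a single clean substitution, either $l_{r-1}\mapsto k_r$ with no sign or $l_r\mapsto k_r$ with a factor of $-1$.
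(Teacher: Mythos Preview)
Your proof is correct and is essentially the same as the paper's: both use the two elementary identities
\[
-\Delta_{k_r}\sum_{l_r=k_r}^{k_{r+1}-1}F(l_r)=F(k_r),\qquad
\Delta_{k_r}\sum_{l_{r-1}=k_{r-1}}^{k_r-1}F(l_{r-1})=F(k_r),
\]
and collapse one summation at a time from the two ends inward. The paper phrases this as an induction on $i+(n+1-j)$, while you unwind the same induction explicitly; the content is identical.
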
  

\begin{proof} 
The proof is by a simple inductive argument with respect to $i+(n+1-j)$, using 
$$
-\Delta_{k_i} \left[ \sum_{l_i=k_i}^{k_{i+1}-1} a(l_i)  \right]= a(k_i) \quad \text{and} \quad 
\Delta_{k_i} \left[ \sum_{l_{i-1}=k_{i-1}}^{k_i-1} a(l_{i-1}) \right]  = a(k_i).
$$
\end{proof}

\section{Sketch of the proof of Theorem~\ref{main}}
\label{sketch}

The purpose of this section is to provide a sketch of the proof of Theorem~\ref{main}. The details are then worked out in Sections~\ref{neutralizing}--\ref{sec:second}.

\medskip

{\bf Setting up a recursion.} The proof of the formula for $\MT_h(\mathbf{k}_n)$ is by induction with respect to 
$h$, which is one of the many parallels with the proof of Theorem~\ref{GT}. The induction is based on a recursion, and, in the case of Gelfand--Tsetlin trapezoids, this recursion is provided in \eqref{GTrec}. For monotone trapezoids it is stated in \eqref{MTrec} and its derivation follows the same principle although it is more involved. Then, in order to prove Theorem~\ref{main}, it suffices to show that the formula in Theorem~\ref{main} satisfies the recursion.

\medskip

{\bf Telescoping.} The recursion is expressed in terms of sums over integer intervals as it is also the case for Gelfand--Tsetlin trapezoids. Our starting point is the right hand side of \eqref{rectoshow1}.
We use telescoping to compute such sums, see \eqref{telescoping} where this is explained for the case of Gelfand--Tsetlin trapezoid and \eqref{inter} where this is applied to the case of monotone trapezoids. Our summands are polynomials in the summation parameters. To sum such polynomials, it is convenient to express them in terms of a binomial coefficient basis because then it is easy to determine the ``discrete integral''. Concretely, we repeatedly apply the following identity:
$$
\sum_{x=a}^b \binom{x}{n} = \sum_{x=a}^b \left( \binom{x+1}{n+1} - \binom{x}{n+1} \right) = 
\binom{b+1}{n+1} - \binom{a}{n+1}.
$$

\medskip

{\bf Decomposition into the symmetric and the antisymmetric part.} After these first two steps, the proof of Theorem~\ref{main} deviates from the proof of Theorem~\ref{GT} for the reason given next. 

In both cases, the telescoping step leads in principle to $2^{n-1}$ terms, see \eqref{gtsum} and \eqref{inter}.  However, in the case of Gelfand--Tsetlin patterns, one can almost immediately observe that all but $n$ of these terms vanish. This is accomplished by showing that the discrete integral $S_h(\mathbf{l}_{n-1})$ vanishes if $l_{p}=l_{p+1}$ for some $p < n-1$. For this purpose, we consider $S_h(\mathbf{l}_{n-1})$ as polynomial in $l_p,l_{p+1}$ and decompose it into its symmetric and its antisymmetric part using 
$$
s(l_p,l_{p+1}) = \frac{s(l_p,l_{p+1}) + s(l_{p+1},l_p)}{2} + \frac{s(l_p,l_{p+1}) - s(l_{p+1},l_p)}{2},
$$
where the first term on the right hand side is symmetric in $l_p,l_{p+1}$ and the second term is antisymmetric.
As for the antisymmetric part, it is obvious that it vanishes if $l_{p}=l_{p+1}$. The miracle happens for the symmetric 
part: It is equal to $\sgt_h(l_p,l_{p+1})$ (see \eqref{sgtdef} for the definition) up to a factor independent of $l_p,l_{p+1}$ and 
$\sgt_h(l_p,l_{p+1})$ obviously vanishes for $l_{p}=l_{p+1}$.

The summand as well as the discrete integral differs in the case of monotone trapezoids from those for Gelfand--Tsetlin  trapezoids by the application of the operator $\prod_{1 \le i < j \le n-1} U_{l_i,l_{i+1}}$, compare \eqref{MTsummand} and \eqref{MTintegral} with \eqref{GTsummand} and \eqref{GTintegral}, respectively. In the case of monotone trapezoids, we also decompose the discrete integral $T_h(\mathbf{l}_{n-1})$ defined in \eqref{MTintegral} into its symmetric and its antisymmetric part with respect to $l_p,l_{p+1}$. This is based on the decomposition for $S_h(\mathbf{l}_{n-1})$, and, for the antisymmetric part, Lemma~\ref{asym} \eqref{asym:1} shows that the antisymmetry is not destroyed by the operator and we can remove this part easily in the case of monotone trapezoids as well. 

As for the symmetric part, Lemma~\ref{asym} \eqref{asym:2} shows that applying the operator $\prod_{1 \le i < j \le n-1} U_{l_i,l_{i+1}}$ to the symmetric part results in a linear combination of $\sgt_{g}(l_p,l_{p+1})$'s for $0 \le g \le h$. Clearly, we have 
$\sgt_{g}(l_{p},l_{p+1})=0$ if $l_{p} = l_{p+1}$ for $0 < g \le h$, however, for $g=0$ this is not anymore the case, and this causes the problem.

The removal of the antisymmetric part is performed in Lemma~\ref{pq}, and the result is a simplified formula for each of the $2^{n-1}$ expressions which are obtained after telescoping. These expressions are parameterized by the interlacing sequences $\mathbf{p}_m$ and $\mathbf{q}_{m-1}$. 

\medskip

{\bf The hidden formal power series.} 
At the end of the proof of Lemma~\ref{pq}, the role of the hidden formal power series becomes apparent: 
It reduces the second line of \eqref{penultimate}, that is, 
$$  
\prod_{1 \le i < j \le m-1} U_{k^{L}_{q_i},k^{L}_{q_j}} 
U_{k^{L}_{q_i},k^{R}_{q_j}}
U_{k^{R}_{q_i},k^{L}_{q_j}}
U_{k^{R}_{q_i},k^{R}_{q_j}}
\prod_{r=1}^{m-1}  A_{k^{L}_{q_r},k^{R}_{q_r}} \sgt_{h}(k^{L}_{q_r},k^{R}_{q_r})   
$$
to 
$$
\prod_{r=1}^{m-1} \sgt_{h}(k^{L}_{q_r},k^{R}_{q_r}),
$$
compare with the expression in the statement of Lemma~\ref{pq}. Small empirical examples (see also the discussion in Example~\ref{weareinbusiness}) show that it makes sense to require 
\begin{equation} 
\label{symop1}
A_{k^{L}_{q_r},k^{R}_{q_r}} \sgt_{h}(k^{L}_{q_r},k^{R}_{q_r}) = \sgt_{h}(k^{L}_{q_r},k^{R}_{q_r})
\end{equation} 
as well as 
\begin{equation} 
\label{symop21} 
U_{k^{L}_{q_i},k^{L}_{q_j}} 
U_{k^{L}_{q_i},k^{R}_{q_j}}
U_{k^{R}_{q_i},k^{L}_{q_j}}
U_{k^{R}_{q_i},k^{R}_{q_j}} \sgt_{h}(k^{L}_{q_i},k^{R}_{q_i}) \sgt_{h}(k^{L}_{q_j},k^{R}_{q_j}) = 
\sgt_{h}(k^{L}_{q_i},k^{R}_{q_i}) \sgt_{h}(k^{L}_{q_j},k^{R}_{q_j}), 
\end{equation} 
as we do in \eqref{1} and \eqref{2} in Definition~\ref{def:annihilating} at the beginning of Section~\ref{neutralizing}.

\medskip

{\bf Applying symmetric operators to $\sgt_h(x,y)$.} Several steps of the proof make it necessary to  understand the action of symmetric operators in $\Delta_x,\Delta_y$ to $\sgt_h(x,y)$. Examples are given in \eqref{symop1} and \eqref{symop21}, 
where the operators are 
$$
A_{x,y}, \quad U_{x,z} U_{y,z}, \quad U_{z,x} U_{z,y}.
$$
For the operator $W^{-1}$, which is one factor of $U$ (see \eqref{AU}), this is accomplished in Lemma~\ref{W12}, and then applied in Corollary~\ref{corsecond} to deal further with \eqref{symop1} and \eqref{symop21}. To obtain a useful characterization of the formal power series $A(x,y)$ that satisfy the two identities, we need also Lemma~\ref{characterize} and finally Proposition~\ref{hiddencharacterize}.

A further symmetric operator in $\Delta_x,\Delta_y$ the action of which on $\sgt_h(x,y)$ needs to be understood is given in 
\eqref{remainingoperator}. 

The strategy to deal with these operators is as follows: Since they are symmetric, they can be expressed in terms of $\Delta_x \Delta_y$ and $-\Delta_x - \Delta_y$. Now the latter two operators have the same affect on $\sgt_h(x,y)$, see \eqref{Bsym1} and \eqref{Bsym2}: They decrease the parameter $h$ by $1$. Thus it would be useful to replace both $\Delta_x \Delta_y$ and $-\Delta_x - \Delta_y$ in the expansion of the operator by $\e^{-1}_{h}$. For instance, this can be accomplished as follows: by setting $X \coloneq \Delta_x$, $Y \coloneq \Delta_y$, and $H \coloneq X Y = - X - Y$, we can use this system of equations to express $X$ and $Y$ in terms of $H$. We use this to eliminate $X,Y$ from the expansion and eventually replace $H$ by $\e_h^{-1}$. Also note that $X+Y=-X-Y$ implies $Y= - \frac{X}{1+X} = \iota(X)$, which is the reason why the involution~$\iota$ appears in several places. All of this is applied in Section~\ref{neutralizing} and after \eqref{remainingoperator}. 

In the application after \eqref{remainingoperator}, we also need to take the identification $k^L_{q_r}=k^R_{q_r}$ into account. Since the operator is applied to 
$\sgt_h$,  this means that we are only interested in the coefficient of $H^h$ in the operator as $\sgt_{g}(x,x) \not= 0$ only for $g=0$. This non-vanishing of $\sgt_{0}(x,x)$ was already identified above
as the reason why the symmetric part of $T_h(\mathbf{l}_{n-1})$ with respect to $l_p,l_{p+1}$ causes problems if compared to the case of Gelfand--Tsetlin trapezoids.

\medskip 

{\bf From $2^{n-1}$ terms to $n$ terms.} The result of all these efforts is that we can eventually show that, even in the case of monotone trapezoids, only $n$ terms remain after the telescoping step. However, these $2^{n-1}-n$ terms that vanish individually for Gelfand--Tsetlin trapezoids do not necessarily vanish individually for monotone trapezoids. They must be grouped to show that the sum of terms within each group vanishes.
For the grouping, it is necessary to rearrange the operators, which is accomplished in Lemma~\ref{regrouping}. 

The grouping is done as follows: we fix a sequence $\mathbf{q}_{m-1}$, and all $\mathbf{p}_{m}$ that are interlaced by $\mathbf{q}_{m-1}$ in a strict sense (see the statement of Lemma~\ref{pq}) belong to the same group. Using the equivalent of the Laplace expansion \eqref{expansion} for Pfaffians, we then sum over all $\mathbf{p}_{m}$, see \eqref{laplace}. This is possible because at this stage the operator no longer depends on the $p_i$ (in a certain sense).

\medskip

{\bf Dealing with the $n$ non-vanishing terms.} 
One more non-trivial step is required to show that the $n$ terms\footnote{At this point they have already been unified to one term by \eqref{expansion}.} remaining after telescoping yield the formula in Theorem~\ref{main}. The $n$ terms correspond to $m=1$ in Lemma~\ref{pq}, in which case we may choose $p_1 \in [n]$, resulting in $n$ terms. 

The non-trivial step is performed in Lemma~\ref{one}, the proof of which is the purpose of the first half of Section~\ref{sec:second}.  The key to this lemma is that the recursion underlying Gelfand--Tsetlin trapezoids  \eqref{GTrec} commutes with the application of a certain operator expressed via elementary symmetric polynomials, see Lemma~\ref{urbanrenewal} \eqref{urbanrenewal:1} for the details. This commutation rule was obtained in an earlier paper, where it is shown to be an extension of \Dfn{Urban renewal}. It implies in particular that 
\begin{equation}
\label{annidiff} 
S(\Delta_{\mathbf{k}_n}) \overline \GT_h(\mathbf{k}_n) = S(\iota(\Delta_{\mathbf{k}_n}))  \overline \GT_h(\mathbf{k}_n)
\end{equation} 
for any symmetric polynomial $S(X_1,\ldots,X_n)$, see Lemma~\ref{fund}. Here an important observation is also that 
$\iota(\Delta_{x}) = \delta_{x}$. 

\medskip

{\bf Dealing with the $2^{n-1}-n$ terms that do vanish.} In order to show that if $m \ge 2$ then, for a fixed sequence $\mathbf{q}_{m-1}$, the sum of all terms where $\mathbf{p}_{m}$ are interlaced by $\mathbf{q}_{m-1}$ in a strict sense vanishes, we first argue that it suffices to consider the case $m=2$. This is done at the end of Section~\ref{sec:first} and reduces the result to proving Lemma~\ref{zero}. A nice reformulation of  Lemma~\ref{zero} is provided in Section~\ref{towards}: There we argue that it suffices to consider 
\begin{equation}
\label{z}
\prod_{i> q} \frac{1+ p(\delta_{k_i}) z}{1+ p(\Delta_{k_i}) z} \overline \GT_h(\mathbf{k}_{n}) \eqqcolon Q(z,\mathbf{k}_n),
\end{equation} 
where $q$ is a half-integer with $1 \le q \le n$ and $p(x)$ is a polynomial without constant term. 
We show in Lemma~\ref{signreversing} that $Q(z,\mathbf{k}_n)$
is a polynomial in $z$ of degree no greater than $h$, and that the degree of the alternating sum $\sum_{i>q} (-1)^i Q(z,\mathbf{k}^i_n)$ in $z$ is less than $h$\footnote{It can be seen that \eqref{annidiff} almost immediately implies that \eqref{z} is independent of $z$ if $q \le 1$.}, which is the key result. For this purpose, a certain additive decomposition of the polynomial $\overline \GT_h(\mathbf{k}_{n})$ is provided that cuts the Gelfand--Tsetlin trapezoid ``along'' the half integer $q$ into two pieces. This makes it possible to deal with the operator $\prod_{i> q} \frac{1+ p(\delta_{k_i}) z}{1+ p(\Delta_{k_i}) z}$.

\section{The hidden formal power series} 
\label{neutralizing}

We will work with the following shifted version of the operator 
$\st_{x,y}$:
\begin{equation}
\label{W} 
W_{x,y} \coloneq \e_x \st_{x,y} = \id + \e_x \e_y -  \e_y = \id + \Delta_x + \Delta_x \Delta_y.
\end{equation}

\begin{definition}\label{def:annihilating}
For a formal power series $A(x,y)$ in $x$ and $y$, we let 
\begin{equation}
	\label{AU}  
	A_{x,y} \coloneq A(\Delta_x,\Delta_y) \quad \text{and} \quad U_{x,y} = W^{-1}_{x,y} A_{x,y}.
\end{equation} 
We say that $A(x,y)$ is \Dfn{annihilating} if $A(x,y)$ is symmetric and the following two conditions are satisfied:
\begin{enumerate}
	\item\label{1} For all $h \ge 0$, we have $A_{x,y} \sgt_h(x,y) = \sgt_h(x,y)$.
	\item\label{2} For all $h_1,h_2 \ge 0$, we have 
	$$
	U_{x_1,x_2} U_{y_1,x_2} U_{x_1,y_2} U_{y_1,y_2} \sgt_{h_1}(x_1,y_1) \sgt_{h_2}(x_2,y_2)
	= \sgt_{h_1}(x_1,y_1) \sgt_{h_2}(x_2,y_2).
	$$
\end{enumerate} 
\end{definition}
In the second property, $A_{x,y}$ annihilates $W_{x,y}$, which explains where the term comes from. The purpose of this section is to show 
that the hidden formal power series as described in Section~\ref{sec:main} are all annihilating in the above sense and that, in a sense, they exhaust this class of formal power series. 

We start with a lemma on some useful computational facts on the operator $W_{x,y}$. We set $\sgt_{h}(x,y)=0$ if $h<0$.

\begin{lemma}
\label{W12}
The following is true:
\begin{enumerate} 
\item\label{W12:1} For any  integer $h$, we have 
$$W^{-1}_{x,z} W^{-1}_{y,z} \sgt_h(x,y) =  (1 + \e_{h}^{-1} \Delta_z (1 + \Delta_z))^{-1}  \sgt_{h}(x,y).$$
\item\label{W12:2} For any integer $h$, we have 
$$
W^{-1}_{z,x} W^{-1}_{z,y} \sgt_h(x,y) =  ((\Delta_z+1)^2 - \e_{h}^{-1} \Delta_z)^{-1}  \sgt_{h}(x,y).
$$
\item\label{W12:3} For any integers $h_1,h_2$, we have 
\begin{multline*} 
W^{-1}_{x_1,x_2} W^{-1}_{y_1,x_2} W^{-1}_{x_1,y_2} W^{-1}_{y_1,y_2} \sgt_{h_1}(x_1,y_1) \sgt_{h_2}(x_2,y_2) \\
=  (1 - 3 \e^{-1}_{h_1} \e^{-1}_{h_2} + \e^{-1}_{h_1} \e^{-2}_{h_2} + \e^{-2}_{h_1} \e^{-1}_{h_2})^{-1} \sgt_{h_1}(x_1,y_1) \sgt_{h_2}(x_2,y_2).
\end{multline*}
\end{enumerate}
\end{lemma}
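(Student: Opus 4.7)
The plan is to treat all three parts by one common mechanism: compute the \emph{forward} product of $W$'s and invert at the very end. The key observation, foreshadowed in the proof sketch, is that on $\sgt_h(x,y)$ the two symmetric operators $\Delta_x\Delta_y$ and $-(\Delta_x+\Delta_y)$ act identically, namely as the shift $\e_h^{-1}$ that sends $\sgt_h$ to $\sgt_{h-1}$ (the first by \eqref{diffgt} plus symmetry of $\sgt_h$ in its arguments; the second by a direct computation from the closed form \eqref{sgtdef}, or by the parallel identity \eqref{Bsym1}/\eqref{Bsym2} cited in the sketch). Consequently, any symmetric polynomial in $\Delta_x,\Delta_y$ collapses, when applied to $\sgt_h(x,y)$, to a polynomial in $\e_h^{-1}$. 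Since the remaining shifts act on disjoint variables or on different $h$-indices, every operator in the lemma commutes with every other, so inverses can be taken at the very end.

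For part \eqref{W12:1}, I would expand $W_{x,z}W_{y,z}=(1+\Delta_x(1+\Delta_z))(1+\Delta_y(1+\Delta_z))=1+(\Delta_x+\Delta_y)(1+\Delta_z)+\Delta_x\Delta_y(1+\Delta_z)^2$. Substituting $(\Delta_x+\Delta_y)\mapsto -\e_h^{-1}$ and $\Delta_x\Delta_y\mapsto \e_h^{-1}$ turns this into $1+\e_h^{-1}(1+\Delta_z)\bigl((1+\Delta_z)-1\bigr)=1+\e_h^{-1}\Delta_z(1+\Delta_z)$ as an operator acting on $\sgt_h(x,y)$; inversion gives the claim. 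Part \eqref{W12:2} proceeds identically: expanding $W_{z,x}W_{z,y}=1+\Delta_z(2+\Delta_x+\Delta_y)+\Delta_z^2(1+\Delta_x+\Delta_y+\Delta_x\Delta_y)$ and applying the same substitutions produces $(1+\Delta_z)^2-\e_h^{-1}\Delta_z$.

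For part \eqref{W12:3}, the plan is to factor the fourfold product as $[W_{x_1,x_2}W_{y_1,x_2}]\cdot[W_{x_1,y_2}W_{y_1,y_2}]$ and apply part \eqref{W12:1} twice to the factor $\sgt_{h_1}(x_1,y_1)$, once with $z=x_2$ and once with $z=y_2$. Because $\e_{h_1}^{-1}$ shifts only the $h_1$-index, it commutes through every $\Delta_{x_2},\Delta_{y_2}$, so the result is that the fourfold product acts on $\sgt_{h_1}(x_1,y_1)\sgt_{h_2}(x_2,y_2)$ as
$$
\bigl(1+\e_{h_1}^{-1}\Delta_{x_2}(1+\Delta_{x_2})\bigr)\bigl(1+\e_{h_1}^{-1}\Delta_{y_2}(1+\Delta_{y_2})\bigr)
$$
applied to $\sgt_{h_2}(x_2,y_2)$. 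I would then expand this product and use the $\sgt_{h_2}$-substitutions $(\Delta_{x_2}+\Delta_{y_2})\mapsto -\e_{h_2}^{-1}$, $\Delta_{x_2}\Delta_{y_2}\mapsto \e_{h_2}^{-1}$ (hence $\Delta_{x_2}^2+\Delta_{y_2}^2\mapsto \e_{h_2}^{-2}-2\e_{h_2}^{-1}$). The coefficient of $\e_{h_1}^{-1}$ becomes $\e_{h_2}^{-2}-3\e_{h_2}^{-1}$, and the mixed term at coefficient $\e_{h_1}^{-2}$, namely $\Delta_{x_2}\Delta_{y_2}\bigl(1+(\Delta_{x_2}+\Delta_{y_2})+\Delta_{x_2}\Delta_{y_2}\bigr)$, collapses to $\e_{h_2}^{-1}-\e_{h_2}^{-2}+\e_{h_2}^{-2}=\e_{h_2}^{-1}$. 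Assembling produces the scalar $1-3\e_{h_1}^{-1}\e_{h_2}^{-1}+\e_{h_1}^{-1}\e_{h_2}^{-2}+\e_{h_1}^{-2}\e_{h_2}^{-1}$, and inverting finishes the proof.

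The only real obstacle is the bookkeeping in part \eqref{W12:3}, where two families of $\sgt$'s interact through two independent $h$-shifts; the essential point is that each $\e_{h_i}^{-1}$ lives in a different register from the variable shifts and from the other $h$-shift, so the two rounds of symmetric substitution may be carried out independently. Once this commutation is checked, everything else is straightforward polynomial algebra.
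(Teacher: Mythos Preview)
Your proposal is correct and follows essentially the same approach as the paper: both use the key substitution $\Delta_x\Delta_y\mapsto \e_h^{-1}$, $-(\Delta_x+\Delta_y)\mapsto \e_h^{-1}$ on $\sgt_h(x,y)$, and both reduce part \eqref{W12:3} by applying part \eqref{W12:1} twice before making the $h_2$-substitutions. The only cosmetic difference is that you expand the forward products $W_{x,z}W_{y,z}$ etc.\ and invert at the end, whereas the paper carries the inverses throughout; the algebra is otherwise identical.
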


Note that the right-hand sides of the identities need to be expanded using geometric series expansion and all sums are effectively finite.

The next two identities are crucial for the proof of the lemma
and follow from simple computations:
\begin{align}
\label{Bsym1} 
\Delta_x \Delta_y \sgt_h(x,y) &= \e_{h}^{-1} \sgt_{h}(x,y), \\
\label{Bsym2}
(\Delta_x + \Delta_y) \sgt_h(x,y) & = - \e_{h}^{-1} \sgt_{h}(x,y).
\end{align} 

\begin{proof}[Proof of Lemma~\ref{W12}]
As for the first assertion observe that the operator $W^{-1}_{x,z} W^{-1}_{y,z}$ is symmetric in $\Delta_x,\Delta_y$ and thus expressible in terms of the basis 
$$
(\Delta_x \Delta_y)^u (-\Delta_x-\Delta_y)^v,
$$
where $u,v \ge 0$. In this expansion, we want to replace this basis element by $(\e_h^{-1})^{u + v}$ as, by \eqref{Bsym1} and \eqref{Bsym2}, the basis element 
decreases $h$ by $u+v$ when applying it to $\sgt_h(x,y)$. Using the following system of equations for 
$\Delta_x,\Delta_y$ 
\begin{equation} 
\label{solutions} 
\Delta_x \Delta_y = \e_{h}^{-1} \qquad \text{and} \qquad 
-\Delta_x-\Delta_y = \e_{h}^{-1},
\end{equation} 
we observe that 
\begin{multline*} 
W^{-1}_{x,z} W^{-1}_{y,z} = (1+\Delta_x + \Delta_x \Delta_z)^{-1}(1+\Delta_y + \Delta_y \Delta_z)^{-1}  \\ = 
(1 + \Delta_x + \Delta_y + \Delta_x \Delta_y + (\Delta_x + \Delta_y + 2 \Delta_x \Delta_y) \Delta_z + \Delta_x \Delta_y \Delta^2_z)^{-1} = 
(1 + \e_{h}^{-1} \Delta_z (1 + \Delta_z))^{-1}, 
\end{multline*} 
and the first assertion follows.

We use the same strategy for the second assertion. By \eqref{solutions}, we have 
\begin{multline*}
W^{-1}_{z,x} W^{-1}_{z,y} = 
 (1+\Delta_z + \Delta_x \Delta_z)^{-1}(1+\Delta_z + \Delta_y \Delta_z)^{-1}  \\
 = (1 + (\Delta_x + \Delta_y +2) \Delta_z + (1+\Delta_x + \Delta_y + \Delta_x \Delta_y) \Delta_z^2)^{-1}  = 
 ((\Delta_z+1)^2 - \e_{h}^{-1} \Delta_z)^{-1}
\end{multline*} 
and the second assertion follows.

As for the third assertion, we use the following 
system of equations
$$
\Delta_{x_1} \Delta_{y_1} = \e_{h_1}^{-1}, 
-\Delta_{x_1}-\Delta_{y_1}  = \e_{h_1}^{-1}, 
\Delta_{x_2} \Delta_{y_2} = \e_{h_2}^{-1}, 
-\Delta_{x_2}-\Delta_{y_2}  = \e_{h_2}^{-1}
$$
as we are interested in the action of the following operator on $\sgt_{h_1}(x_1,y_1) \sgt_{h_2}(x_2,y_2)$
\begin{multline*} 
W^{-1}_{x_1,x_2} W^{-1}_{y_1,x_2} W^{-1}_{x_1,y_2} W^{-1}_{y_1,y_2}  
= (1+ \e^{-1}_{h_1} \Delta_{x_2} (1+ \Delta_{x_2}))^{-1} (1+ \e^{-1}_{h_1} \Delta_{y_2} (1+ \Delta_{y_2}))^{-1} \\
= (1 + \e^{-1}_{h_1} (\Delta_{x_2} (1+ \Delta_{x_2})+\Delta_{y_2} (1+ \Delta_{y_2})) + \e^{-2}_{h_1} 
\Delta_{x_2} (1+ \Delta_{x_2}) \Delta_{y_2} (1+ \Delta_{y_2}))^{-1},
\end{multline*} 
where in the first step we used the first assertion.
Since 
$$
\Delta_{x_2} (1+ \Delta_{x_2})+\Delta_{y_2} (1+ \Delta_{y_2}) = 
\Delta_{x_2} + \Delta_{y_2} + (\Delta_{x_2} + \Delta_{y_2})^2 - 2 \Delta_{x_2} \Delta_{y_2} 
= - 3 \e^{-1}_{h_2} + \e^{-2}_{h_2}
$$
and
$$
 \Delta_{x_2} (1+ \Delta_{x_2}) \Delta_{y_2} (1+ \Delta_{y_2}) = 
 \Delta_{x_2} \Delta_{y_2} (1 + \Delta_{x_2} + \Delta_{y_2} + \Delta_{x_2} \Delta_{y_2}) = \e^{-1}_{h_2}, 
$$
the third assertion follows.
\end{proof}

A simple consequence of Lemma~\ref{W12} \eqref{W12:3} is the following.

\begin{cor}
\label{corsecond}
We have 
$$
U_{x_1,x_2} U_{y_1,x_2} U_{x_1,y_2} U_{y_1,y_2} \sgt_{h_1}(x_1,y_1) \sgt_{h_2}(x_2,y_2)
= \sgt_{h_1}(x_1,y_1) \sgt_{h_2}(x_2,y_2)
$$
for all integers $h_1,h_2$ if and only if 
\begin{multline*} 
A_{x_1,x_2} A_{y_1,x_2} A_{x_1,y_2} A_{y_1,y_2} \sgt_{h_1}(x_1,y_1) \sgt_{h_2}(x_2,y_2) \\
= (1 - 3 \e^{-1}_{h_1} \e^{-1}_{h_2} + \e^{-1}_{h_1} \e^{-2}_{h_2} + \e^{-2}_{h_1} \e^{-1}_{h_2}) 
\sgt_{h_1}(x_1,y_1) \sgt_{h_2}(x_2,y_2). 
\end{multline*} 
\end{cor}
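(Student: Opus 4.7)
The plan is to derive this equivalence as a direct consequence of the factorization $U_{x,y} = W^{-1}_{x,y} A_{x,y}$ together with Lemma~\ref{W12}\,\eqref{W12:3}.

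First, I would observe that all operators in sight are formal power series in the pairwise commuting shift operators $\Delta_{x_i}, \Delta_{y_j}$, so they all commute with each other. This lets me rearrange the fourfold composition as
\begin{equation*}
U_{x_1,x_2} U_{y_1,x_2} U_{x_1,y_2} U_{y_1,y_2}
= \bigl(W^{-1}_{x_1,x_2} W^{-1}_{y_1,x_2} W^{-1}_{x_1,y_2} W^{-1}_{y_1,y_2}\bigr) \bigl(A_{x_1,x_2} A_{y_1,x_2} A_{x_1,y_2} A_{y_1,y_2}\bigr).
\end{equation*}

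Next, setting $C \coloneq 1 - 3 \e^{-1}_{h_1} \e^{-1}_{h_2} + \e^{-1}_{h_1} \e^{-2}_{h_2} + \e^{-2}_{h_1} \e^{-1}_{h_2}$, Lemma~\ref{W12}\,\eqref{W12:3} reads
\begin{equation*}
W^{-1}_{x_1,x_2} W^{-1}_{y_1,x_2} W^{-1}_{x_1,y_2} W^{-1}_{y_1,y_2} \sgt_{h_1}(x_1,y_1) \sgt_{h_2}(x_2,y_2)
= C^{-1} \sgt_{h_1}(x_1,y_1) \sgt_{h_2}(x_2,y_2).
\end{equation*}
Since the $W$-operators act only in the variables $x_i, y_j$ while $C$ acts only in $h_1, h_2$, they commute, and $C$ is formally invertible as its constant term equals $1$. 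Applying $W_{x_1,x_2} W_{y_1,x_2} W_{x_1,y_2} W_{y_1,y_2}$ to both sides of the above identity (a reversible step, since this product is itself formally invertible) yields the ``forward'' counterpart
\begin{equation*}
W_{x_1,x_2} W_{y_1,x_2} W_{x_1,y_2} W_{y_1,y_2} \sgt_{h_1}(x_1,y_1) \sgt_{h_2}(x_2,y_2) = C \sgt_{h_1}(x_1,y_1) \sgt_{h_2}(x_2,y_2).
\end{equation*}

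Finally, writing $g \coloneq \sgt_{h_1}(x_1,y_1) \sgt_{h_2}(x_2,y_2)$, the first identity in the statement unfolds via the factorization in the first step to
\begin{equation*}
W^{-1}_{x_1,x_2} W^{-1}_{y_1,x_2} W^{-1}_{x_1,y_2} W^{-1}_{y_1,y_2} \bigl[A_{x_1,x_2} A_{y_1,x_2} A_{x_1,y_2} A_{y_1,y_2}\, g\bigr] = g.
\end{equation*}
Applying $W_{x_1,x_2} W_{y_1,x_2} W_{x_1,y_2} W_{y_1,y_2}$ on both sides and substituting via the forward counterpart above, this is equivalent to $A_{x_1,x_2} A_{y_1,x_2} A_{x_1,y_2} A_{y_1,y_2}\, g = C g$, which is exactly the second identity in the statement; the argument is symmetric, giving the ``if and only if.'' No real obstacle arises: the only subtlety is the distinction between operator-level identities and identities \emph{after application to a specific function}, which is handled by the commutativity of all the operators and the formal invertibility of $W$ and $C$ (both have constant term $1$, with series truncating on the polynomial input).
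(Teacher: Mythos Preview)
Your proof is correct and follows exactly the approach the paper intends: the paper states that this corollary is ``a simple consequence of Lemma~\ref{W12}\,\eqref{W12:3}'' without giving details, and your argument---factoring $U = W^{-1}A$, using commutativity to separate the $W^{-1}$- and $A$-products, and then inverting Lemma~\ref{W12}\,\eqref{W12:3} via the formal invertibility of the $W$-product and of $C$---is precisely how one unpacks that consequence.
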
 

Now we can easily characterize formal power series $A(x_1,x_2)$ that satisfy Conditions \eqref{1} and \eqref{2} for annihilating formal power series. Note that Condition \eqref{1} is trivially satisfied if $h<0$ and Condition \eqref{2} is trivially satisfied if at least one of $h_1,h_2$ is negative.
\begin{lemma}
\label{characterize}
Let $A(x,y)$ be formal power series. Then the following is true:
\begin{enumerate}
\item\label{characterize:1}
For all $h \ge 0$, it holds $A_{x,y} \sgt_h(x,y)=\sgt_h(x,y)$
if and only if $A(x,\iota(x))=1$.
\item\label{characterize:2} For all $h_1,h_2 \ge 0$, it holds  
\begin{multline*}  
A_{x_1,x_2} A_{y_1,x_2} A_{x_1,y_2}  A_{y_1,y_2} \sgt_{h_1}(x_1,y_1) \sgt_{h_2}(x_2,y_2) \\
= (1 - 3 \e^{-1}_{h_1} \e^{-1}_{h_2} + \e^{-1}_{h_1} \e^{-2}_{h_2} + \e^{-2}_{h_1} \e^{-1}_{h_2}) 
\sgt_{h_1}(x_1,y_1) \sgt_{h_2}(x_2,y_2)
\end{multline*} 
if and only if 
\begin{multline} 
\label{equation}
A(x_1,x_2) A(x_1,\iota(x_2)) A(\iota(x_1),x_2) A(\iota(x_1),\iota(x_2)) \\ = \frac{(1+x_1+x_2)(1-x_1 x_2)(1+x_1+x_1 x_2)(1+x_2+x_1 x_2)}{(1+x_1)^2 (1+x_2)^2}.
\end{multline}
\end{enumerate} 
\end{lemma}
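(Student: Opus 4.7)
The engine of both parts is the observation that $\sgt_h(x,y)$ is a polynomial in $z := x - y$ alone, so on $\sgt_h$ (and on any polynomial in $x - y$) the operator $\Delta_y$ acts identically to $\iota(\Delta_x)$: writing $\sgt_h = p_h(z)$, both operators send $p_h(z)$ to $p_h(z-1) - p_h(z)$. Indeed, $\id + \Delta_x$ shifts $z \mapsto z+1$, so $\iota(\Delta_x) = -\Delta_x/(\id + \Delta_x)$ shifts $z$ by $-1$, matching $\Delta_y$. Thus I may substitute $\Delta_y \mapsto \iota(\Delta_x)$ whenever I apply an operator to a polynomial in $x - y$, reducing bivariate operator identities to univariate ones.

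For Part (1), set $B(x) := A(x, \iota(x))$. Then $A_{x,y}\sgt_h = B(\Delta_x)\sgt_h$, so the desired condition becomes $B(\Delta_x)\sgt_h = \sgt_h$. Since $\sgt_h$ has degree $2h$ in $z$ and $\{\Delta_x^k \sgt_h\}_{k=0}^{2h}$ has strictly decreasing degrees in $z$ (hence is linearly independent), requiring this identity for every $h \geq 0$ successively forces the coefficients of $B(x) - 1$ to vanish: at $h$, using that all $b_1, \ldots, b_{2h-2}$ already vanish from previous steps, linear independence forces $b_{2h-1} = b_{2h} = 0$. Therefore $B \equiv 1$, i.e., $A(x, \iota(x)) = 1$; the converse is immediate.

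For Part (2), I apply the substitution in both pairs. With $X := \Delta_{x_1}$ and $Y := \Delta_{x_2}$, the operator $A_{x_1,x_2} A_{y_1,x_2} A_{x_1,y_2} A_{y_1,y_2}$ acts on $\sgt_{h_1}(x_1,y_1)\sgt_{h_2}(x_2,y_2)$ as
\begin{equation*}
C(X, Y) := A(X, Y)\, A(\iota(X), Y)\, A(X, \iota(Y))\, A(\iota(X), \iota(Y)).
\end{equation*}
The involutions $X \mapsto \iota(X)$ and $Y \mapsto \iota(Y)$ permute the four factors, so $C$ is invariant under each. Since $x + \iota(x) = -x\iota(x)$, the invariant subring of $\mathbb{Q}[[X]]$ under $X \mapsto \iota(X)$ equals $\mathbb{Q}[[X \iota(X)]]$; hence $C(X, Y) = G(X \iota(X), Y \iota(Y))$ for a unique formal power series $G(u, v)$. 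On $\sgt_{h_1}\sgt_{h_2}$, identity \eqref{Bsym1} gives $X\iota(X) = \e_{h_1}^{-1}$ and $Y\iota(Y) = \e_{h_2}^{-1}$, so the left-hand side of the desired identity equals $G(\e_{h_1}^{-1}, \e_{h_2}^{-1})\sgt_{h_1}\sgt_{h_2}$. The products $\{\sgt_a \sgt_b\}_{a,b \geq 0}$ are linearly independent (distinct bi-degrees in $(x_1-y_1, x_2-y_2)$), so the identity for all $h_1, h_2 \geq 0$ is equivalent to $G(u, v) = 1 - 3uv + uv^2 + u^2 v$ as a formal power series.

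To translate this back to a condition on $A$, substitute formal variables $X = x_1$, $Y = x_2$ into $C(X, Y) = G(X\iota(X), Y\iota(Y))$, yielding
\begin{equation*}
A(x_1, x_2)\, A(\iota(x_1), x_2)\, A(x_1, \iota(x_2))\, A(\iota(x_1), \iota(x_2)) = G(x_1 \iota(x_1), x_2 \iota(x_2)).
\end{equation*}
The substitution $(u, v) \mapsto (-x_1^2/(1+x_1), -x_2^2/(1+x_2))$ is injective on formal power series by a leading-term argument: the coefficient of $x_1^{2j}$ at the lowest power of $x_2$ in $\sum g_{jk} u^j v^k$ successively recovers each $g_{jk}$. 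Hence the condition on $G$ translates into the same identity after substitution, and the remaining task is the algebraic check that $1 - 3uv + uv^2 + u^2 v$ evaluated at $u = -x_1^2/(1+x_1), v = -x_2^2/(1+x_2)$ equals the right-hand side of \eqref{equation}. This is where I expect all the bookkeeping to live; it is a routine but non-trivial polynomial identity, obtained by clearing the denominator $(1+x_1)^2(1+x_2)^2$ on both sides and expanding, and it is the one step not driven by the structural considerations above.
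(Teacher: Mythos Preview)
Your proof is correct and rests on the same idea as the paper's: reduce the action on $\sgt_h$ by replacing $\Delta_y$ with $\iota(\Delta_x)$, then compare coefficients. Your entry point, however, is cleaner. You observe directly that $\sgt_h(x,y)$ is a polynomial in $x-y$, so that $\Delta_y$ and $\delta_x=\iota(\Delta_x)$ act identically on it; the paper instead expands the (symmetric) operator in the basis $(\Delta_x\Delta_y)^u(-\Delta_x-\Delta_y)^v$ and then invokes \eqref{Bsym1}--\eqref{Bsym2}. One dividend of your route is that Part~\eqref{characterize:1} goes through without assuming $A(x,y)$ is symmetric, an assumption the paper's proof uses but the lemma does not state. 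You are also more explicit about the ``only if'' direction: your linear-independence argument via degrees in $x-y$, the identification of the $\iota$-invariant subring with $\mathbb{Q}[[X\iota(X)]]$, and the leading-term injectivity of $u\mapsto -x^2/(1+x)$ all supply details the paper compresses into ``this leads immediately to the first assertion.'' One small omission: in Part~\eqref{characterize:1} your induction should begin at $h=0$, which gives $b_0=1$, before the step at $h\ge 1$ forces $b_{2h-1}=b_{2h}=0$.
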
 

\begin{proof}
We reformulate Conditions~\eqref{1} and \eqref{2}, where we use Corollary~\ref{corsecond} for the second 
condition. Since $A_{x,y}$ is a symmetric operator in $x,y$ and 
$A_{x_1,x_2} A_{y_1,x_2} A_{x_1,y_2} A_{y_1,y_2}$ is a symmetric operator in $x_1,y_1$ and 
$x_2,y_2$, the first operator can be expanded in terms of the basis 
$$
(\Delta_x \Delta_y)^u (-\Delta_x-\Delta_y)^v
$$
and the second in terms of the basis 
$$
(\Delta_{x_1} \Delta_{y_1})^{u_1} (-\Delta_{x_1}-\Delta_{y_1})^{v_1} 
(\Delta_{x_2} \Delta_{y_2})^{u_2} (-\Delta_{x_2}-\Delta_{y_2})^{v_2}. 
$$
As the action of $\Delta_x \Delta_y$ on $\sgt_h(x,y)$ is the same as the action 
of $-\Delta_x-\Delta_y$, we can identify $\Delta_x \Delta_y$ with $-\Delta_x-\Delta_y$ in the first case and 
$\Delta_{x_1} \Delta_{y_1}$ with $-\Delta_{x_1} -\Delta_{y_1}$ as well as 
$\Delta_{x_2} \Delta_{y_2}$ with $-\Delta_{x_2} -\Delta_{y_2}$ in the second case, and this is equivalent to 
$$
\Delta_y = \iota(\Delta_x), \Delta_{y_1} = \iota(\Delta_{x_1}),\Delta_{y_2} = \iota(\Delta_{x_2})
$$
since solving  
$$
\Delta_{x} \Delta_{y} = -\Delta_{x} -\Delta_{y} 
$$
for $\Delta_y$ gives $\Delta_y = \iota(\Delta_x)$. This leads immediately to the first assertion.

As for the second assertion, we use Corollary~\ref{corsecond} to translate the condition into 
\begin{multline}\label{cond2}
A(\Delta_{x_1},\Delta_{x_2}) A(\iota(\Delta_{x_1}),\Delta_{x_2}) A(\Delta_{x_1},\iota(\Delta_{x_2})) A(\iota(\Delta_{x_1}),\iota(\Delta_{x_2}))\\
= 1 - 3 \e_{h_1}^{-1} \e_{h_2}^{-1} + 
\e_{h_1}^{-1} \e_{h_2}^{-2} + \e_{h_1}^{-2} \e_{h_2}^{-1}.
\end{multline}
Using the following identities
\begin{align*}
\e_{h_1}^{-1}&= \Delta_{x_1} \iota(\Delta_{x_1}) = - \Delta_{x_1} - \iota(\Delta_{x_1})= - \frac{\Delta_{x_1}^2}{1+\Delta_{x_1}}, \\
\e_{h_2}^{-1}&=\Delta_{x_2} \iota(\Delta_{x_2}) = -\Delta_{x_2} -\iota(\Delta_{x_2}) = - \frac{\Delta_{x_2}^2}{1+\Delta_{x_2}},
\end{align*}
the right-hand side of \eqref{cond2} can be transformed into 
$$\frac{(1+\Delta_{x_1}+\Delta_{x_2})(1-\Delta_{x_1} \Delta_{x_2})(1+\Delta_{x_1}+\Delta_{x_1} \Delta_{x_2})(1+\Delta_{x_2}+\Delta_{x_1} \Delta_{x_2})}{(1+\Delta_{x_1})^2 (1+\Delta_{x_2})^2}.$$
\end{proof} 

Finally we have gathered enough information to determine the annihilating series.

\begin{prop}
\label{hiddencharacterize}
The following holds:
\begin{enumerate}
\item\label{hiddencharacterize:1}  A formal power series $B(x,y)$ is a solution of 
\begin{equation} 
\label{secondgen} 
B(x_1,x_2) B(x_1,\iota(x_2)) B(\iota(x_1),x_2) B(\iota(x_1),\iota(x_2))=1, 
\end{equation}
 if and only if there is a formal power series 
$P(x,y)$ and a fourth root of unity $\zeta$ with  
\begin{equation}
\label{T}  
B(x,y)=\zeta \frac{P(x,y) P(y,x)}{P(\iota(x),y) P(\iota(y),x)}.
\end{equation} 
\item\label{hiddencharacterize:2} A formal power series $A(x,y)$ is annihilating if and only if it can be written as 
$$
A(x,y) = (1+x+y)  \frac{P_0(x,y) P_0(y,x)}{P_0(\iota(x),y) P_0(\iota(y),x)} \frac{P(x,y)P(y,x)}{P(\iota(x),y)P(\iota(y),x)},
$$ 
where $P_0(x,y) = 1-(x+1)(y+1) - \rho (x+2)$ and $\rho$ is a primitive sixth root of unity, and $P(x,y)$ is a formal power series that satisfies 
\begin{equation}
\label{singleequation}  
\frac{P(x,\iota(x)) P(\iota(x),x)}{P(x,x)P(\iota(x),\iota(x))}=1.
\end{equation}
Another choice for the factor $1+x+y$ of $A(x,y)$ is $1- x y$ and other choices for $P_0(x,y)$ are as follows.
\begin{align*} 
P_0(x,y) &=
1-(x+1)(y+1) + \rho (x+1) (x+2) \\
P_0(x,y)&= \sqrt{(1- (x+1) (y+1) - \rho (2+x))(1- (x+1) (y+1) - \rho^5(x+2))} \\ 
 &= \sqrt{4+ 6 x + 3 x^2+2 y +y^2 + 5 x y + 3 x^2 y  + 2 x y^2 +x^2 y^2} \\
P_0(x,y)&= 
\sqrt{(1-(x+1) (y+1) + \rho (x+1) (x+2))(1-(x+1) (y+1) + \rho^5 (x+1) (x+2))} \\
&=\sqrt{4+10 x + 11 x^2+5 x^3+x^4- 2 y+y^2-3 x y-2 x^2 y-x^3 y+2 x y^2+x^2 y^2}
\end{align*} 
\end{enumerate} 
\end{prop}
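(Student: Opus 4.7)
The plan is to prove both parts in sequence. For the ``if'' direction of part~(1), I would substitute the given formula for $B$ into \eqref{secondgen}: the eight numerator factors $P(\epsilon_1(x_1),\epsilon_2(x_2))$ and $P(\epsilon_2(x_2),\epsilon_1(x_1))$ with $\epsilon_1,\epsilon_2\in\{\id,\iota\}$ match exactly the eight denominator factors once one applies $\iota^2=\id$, and the product collapses to $\zeta^4=1$.

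For the ``only if'' direction I would first evaluate \eqref{secondgen} at $(x_1,x_2)=(0,0)$ to force $\zeta\coloneq B(0,0)$ to be a fourth root of unity, and then pass to $L\coloneq\log(B/\zeta)$. Setting $N\coloneq\log P$, the task reduces to solving the linear equation $L(x,y)=N(x,y)+N(y,x)-N(\iota(x),y)-N(\iota(y),x)$ in formal power series $N$. My idea is to linearize $\iota$ via the change of variables $u_i=x_i/(x_i+2)$, under which $\iota$ acts as $u_i\mapsto -u_i$. In these coordinates, the $(\mathbb{Z}/2)^2$-action on parities decomposes $L$ into four isotypic components; the condition on $L$ forces the ``even--even'' component to vanish, and for $(x,y)$-symmetric $B$ (which is the case needed in part~(2)) one can then construct $N$ explicitly, for instance as $N=\tfrac12 L^{(-,+)}+\tfrac14 L^{(-,-)}$, where the superscripts denote the parity isotypic components. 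This linearize-and-decompose step is the crux.

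For part~(2) I would factor $A(x,y)=C(x,y)B(x,y)$ with $C\in\{1+x+y,\,1-xy\}$ in order to reduce to part~(1). By Lemma~\ref{characterize}, $A$ is annihilating iff it is symmetric, $A(x,\iota(x))=1$, and \eqref{equation} holds. A direct verification shows
\[
\prod_{\epsilon_1,\epsilon_2\in\{\id,\iota\}} C(\epsilon_1(x_1),\epsilon_2(x_2)) = \frac{(1+x_1+x_2)(1-x_1x_2)(1+x_1+x_1x_2)(1+x_2+x_1x_2)}{(1+x_1)^2(1+x_2)^2},
\]
which equals the right-hand side of \eqref{equation}; so \eqref{equation} for $A$ is equivalent to \eqref{secondgen} for $B$. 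Applying part~(1) yields $B=\zeta P(x,y)P(y,x)/[P(\iota(x),y)P(\iota(y),x)]$, and $\zeta=1$ is forced by $A(0,0)=C(0,0)=1$. The leftover condition $A(x,\iota(x))=1$, combined with the elementary computation $C(x,\iota(x))=(1+x+x^2)/(1+x)$ (the same value for both choices of $C$), becomes $B(x,\iota(x))=(1+x)/(1+x+x^2)$, which is exactly \eqref{P}.

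To finish, I would exploit the multiplicativity of \eqref{P}: fixing any one solution $P_0$ and writing $P=P_0 Q$ shows that $P$ solves \eqref{P} iff $Q$ satisfies the homogeneous equation \eqref{singleequation}, which yields the claimed parametrization. Finally, each listed $P_0$ must be directly verified to satisfy \eqref{P}: for $P_0(x,y)=1-(x+1)(y+1)-\rho(x+2)$ this is a routine manipulation using $\iota(x)=-x/(1+x)$ and the minimal polynomial $\rho^2-\rho+1=0$ of a primitive sixth root of unity; the square-root variants are products of two such cases with conjugate parameters $\rho$ and $\rho^5$, so their radicands have rational coefficients and \eqref{P} follows multiplicatively from the $\rho$-case. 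The main obstacle is the cohomological step in part~(1), which the change of variables $u=x/(x+2)$ makes tractable; the remainder reduces to direct computation.
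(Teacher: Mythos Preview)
Your proposal is correct and closely parallels the paper's argument in part~(2), but takes a genuinely different route in the ``only if'' direction of part~(1). The paper simply writes down an explicit solution
\[
P(x_1,x_2) = B(x_1,x_2)^{3/8}\, B(x_1,\iota(x_2))^{1/4}\, B(\iota(x_1),\iota(x_2))^{1/8}
\]
and asserts that a direct computation (using the symmetry of $B$ together with \eqref{secondgen}) verifies \eqref{T} with $\zeta=1$. Your approach---passing to logarithms and then to the coordinate $u=x/(x+2)$, under which $\iota$ becomes $u\mapsto -u$, followed by a $(\mathbb{Z}/2)^2$ parity decomposition---is more conceptual: it identifies the obstruction to solving the linear equation (the even--even component of $\log B$) and shows it vanishes precisely because of \eqref{secondgen}. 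Both arguments tacitly restrict to \emph{symmetric} $B$; this is necessary since the form \eqref{T} is automatically symmetric, so part~(1) as literally stated fails for non-symmetric solutions of \eqref{secondgen} such as $B(x,y)=1+x$. You flag this restriction explicitly, which is appropriate since only the symmetric case feeds into part~(2). The paper's explicit formula is quicker to check by hand; your cohomological viewpoint explains where such formulas come from and would adapt readily to other involutions.
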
 

\begin{proof} We prove the first part. A simple calculation shows that any $B(x,y)$ as given in \eqref{T} is a solution of \eqref{secondgen}.

For the converse note that we may assume without loss of generality that the constant term of  $B(x_1,x_2)$ is 
$1$ as the constant term of a solution of 
\eqref{secondgen} is obviously a fourth root of unity and thus every solution can be obtained from those with constant term $1$ by multiplying with 
the fourth root of unity.  A simple calculation using the symmetry of $B(x_1,x_2)$ as well as \eqref{secondgen} shows that 
\eqref{T} is satisfied with $\zeta=1$
when setting
$$
P(x_1,x_2) = B(x_1,x_2)^{3/8} B(x_1,\iota(x_2))^{1 / 4} B(\iota(x_1),\iota(x_2))^{1 / 8}.
$$

To prove the second assertion, note that two solutions of \eqref{equation} are the symmetric factors in the numerator of the right-hand side of \eqref{equation}, namely
$$
A_1(x,y)=x+y+1 \quad \text{and} \quad A_2(x,y)=1-x y, 
$$
however, in both cases we have 
$$
A_i(x,\iota(x))=\frac{1+x+x^2}{1+x}, 
$$
and so they are not annihilating series as they do not satisfy the first condition. Now suppose $A(x,y)$ is any solution of \eqref{equation}, then $A(x,y) A_1(x,y)^{-1} \eqcolon B(x,y)$ is a solution of 
\eqref{secondgen}. Phrased differently, any 
solution $A(x,y)$ of \eqref{equation} can be written as $A_1(x,y) B(x,y)$ for an appropriate solution $B(x,y)$ of \eqref{secondgen}.
This applies in particular to $A(x,y)=A_2(x,y)$; therefore, we work with $A_1(x,y)$ in the following.

Let 
$
P_0(x,y) = 1-(x+1)(y+1) - \rho (x+2)  
$
where $\rho$ is primitive sixth root of unity  and set
$$
B_0(x,y)=\frac{P_0(x,y) P_0(y,x)}{P_0(\iota(x),y) P_0(\iota(y),x)}
$$
then 
$$
B_0(x,\iota(x)) = \frac{x+1}{1+x+x^2} = A_1(x,\iota(x))^{-1},
$$
and so it follows that $A_1(x,y) B_0(x,y)$ is a solution that satisfies 
both conditions for annihilating formal power series, where we have also employed the first assertion.
Note that our choice for $P_0(x,y)$ is the polynomial solution of smallest degree satisfying 
$$
\frac{P_0(x,\iota(x)) P_0(\iota(x),x)}{P_0(\iota(x),\iota(x)) P_0(x,x)} = \frac{x+1}{1+x+x^2}.
$$
Other solutions of this equation are given in the assertion.

To summarize, all annihilating series are of the form $A_1(x,y) B(x,y)$, where 
$B(x,y)$ is a solution of \eqref{secondgen} and 
$$
B(x,\iota(x)) =  \frac{x+1}{1+x+x^2}.
$$
Now all such $B(x,y)$ can be written as $B_0(x,y) C(x,y)$, where $C(x,y)$ is a solution of \eqref{secondgen} and 
$C(x,\iota(x))=1$. By the first assertion, these $C(x,y)$ are precisely those of the following form 
$$
C(x,y) = \zeta \frac{P(x,y)P(y,x)}{P(\iota(x),y)P(\iota(y),x)}
$$
where 
$$
\zeta \frac{P(x,\iota(x))P(\iota(x),x)}{P(\iota(x),\iota(x))P(x,x)}=1
$$
 and $\zeta$ is a fourth root of unity. Comparing constant terms, we can conclude that this equation only has a solution if 
 $\zeta=1$.
\end{proof} 

To find solutions of \eqref{singleequation}, it is useful to apply a transformation of the variables $x,y$, such that the involution $z \to \iota(z)$ is transformed into the 
involution $z \to \frac{1}{z}$. Namely, if we are applying the simple transformation 
$x \to x-1$ and $y \to y-1$, then $y \to \iota(x)$ is transformed to $y-1 \to - \frac{x-1}{x}$, i.e. $y \to \frac{1}{x}$. 
Then \eqref{singleequation} is transformed into 
$$
\frac{Q(x,x^{-1}) Q(x^{-1},x)}{Q(x,x)Q(x^{-1},x^{-1})}=1.
$$
Such $Q(x,y)$ are for instance given by  
$$
\frac{\sum\limits_{i,j=-n}^{n} u_{i,|j|} x^i y^j}{\sum\limits_{i,j=-n}^{n} v_{i,|j|} x^i y^j}. 
$$
A solution of \eqref{singleequation} is then given by $P(x,y)=Q(x+1,y+1)$. 

\section{First part of the proof of Theorem~\texorpdfstring{\ref{main}}{\ref*{main}}}
\label{sec:first}

The proof of Theorem~\ref{main} is by induction with respect to $h$. Observe that the case $h=0$ follows from the fact that 
$\MT_0(\mathbf{k}_n) = \GT_0(\mathbf{k}_n) = 1$,
and that both $\st^{-1}_{x,y}$ and $A_{x,y}$ act as the identity on functions independent of $x$ and $y$: for $\st^{-1}_{x,y}$, this holds because it is 
obviously true for  $\st_{x,y}$; for $A_{x,y}$, this follows from the fact that the constant term of $A(x,y)$ is always $1$ (see Lemma~\ref{characterize}~\eqref{characterize:1}).

The induction step it is based on the following recursion
$$
\MT_h(\mathbf{k}_n) = \sum_{\mathbf{l}_{n-1} \preceq  \mathbf{k}_{n} \atop \mathbf{l}_{n-1} \text{ strictly increasing} }
\MT_{h-1}(\mathbf{l}_{n-1}).
$$
The recursion can also be written as 
\begin{equation}
\label{MTrec} 
\MT_h(\mathbf{k}_n)  
=  \left. \left[ \prod_{i=1}^n \st_{k^L_i,k^R_i} 
\sum_{\mathbf{l}_{n-1} \in \cart\limits_{i=1}^{n-1} [k^{R}_i,k^{L}_{i+1}]}
\MT_{h-1}(\mathbf{l}_{n-1}) \right] \right|_{k^L_i=k^R_i=k_i},
\end{equation} 
which follows from the explanation concerning the operator $\st_{x,y}$ in Section~\ref{sec:main}, see in particular \eqref{strictop}. Consequently, it suffices to show
\begin{multline} 
\label{rectoshow} 
\prod_{1 \le i < j \le n} \st^{-1}_{k_i,k_j} A_{k_i,k_j} \GT_{h}(\mathbf{k}_n) \\=
\left. \left[ \prod_{i=1}^n \st_{k^L_i,k^R_i}
\sum_{\mathbf{l}_{n-1} \in \cart\limits_{i=1}^{n-1} [k^{R}_i,k^{L}_{i+1}]}  
\prod_{1 \le i < j \le n-1} \st^{-1}_{l_i,l_j} A_{l_i,l_j} \GT_{h-1}(\mathbf{l}_{n-1}) \right] \right|_{k^L_i=k^R_i=k_i}
\end{multline} 
for $h \ge 1$. Again, let 
$W_{x,y} = \e_x \st_{x,y}$. We aim to replace all occurrences of $\st_{x,y}$ in \eqref{rectoshow} by $W_{x,y}$. As 
$\st^{-1}_{x,y} = \e_x W^{-1}_{x,y}$, we see that 
\begin{multline*} 
\prod_{1 \le i < j \le n} \st^{-1}_{k_i,k_j} A_{k_i,k_j} \GT_h(\mathbf{k}_n) 
= \prod_{1 \le i < j \le n} \e_{k_i} W^{-1}_{k_i,k_j} A_{k_i,k_j} \GT_h(\mathbf{k}_n) \\  
= \prod_{1 \le i < j \le n}  W^{-1}_{k_i,k_j} A_{k_i,k_j} \GT_h(\mathbf{k}_n- \mathbf{0\!\uparrow}_n + \const{n}{n}) \\
=  \prod_{1 \le i < j \le n}  W^{-1}_{k_i,k_j} A_{k_i,k_j} \GT_h(\mathbf{k}_n- \mathbf{0\!\uparrow}_n),
\end{multline*}
where we have used the property
\begin{equation}
	\label{constant}
	\GT_h(\mathbf{k}_n) = \GT_h(\mathbf{k}_n+\const{c}{n})
\end{equation} 
with $\const{c}{n}\coloneq\underbrace{(c,\ldots,c)}_{n}$ for any integer $c$ and again $\mathbf{0\!\uparrow}_n=(1,2,\ldots,n)$. Likewise, 
$$
\prod_{1 \le i < j \le n-1} \st^{-1}_{l_i,l_j} A_{l_i,l_j} \GT_{h-1}(\mathbf{l}_{n-1}) =
\prod_{1 \le i < j \le n-1} W^{-1}_{l_i,l_j} A_{l_i,l_j} \GT_{h-1}(\mathbf{l}_{n-1}-\mathbf{0\!\uparrow}_{n-1}).
$$
Thus, it is convenient to consider the following transformation of the counting functions $\GT_h(\mathbf{k}_n)$: 
$$
\overline \GT_h(\mathbf{k}_n) \coloneq \GT_h(\mathbf{k}_n-\mathbf{0\!\uparrow}_n).
$$
The transformed quantity satisfies the recursion
$$
\overline \GT_h(\mathbf{k}_n) = \sum_{\mathbf{l}_{n-1} \prec \mathbf{k}_n}  \overline 
\GT_{h-1}(\mathbf{l}_{n-1}),
$$
see also \eqref{GTrec}.
Now \eqref{rectoshow} is equivalent to 
\begin{multline} 
\label{rectoshow1}
\prod_{1 \le i < j \le n} W^{-1}_{k_i,k_j} A_{k_i,k_j}  \overline \GT_h(\mathbf{k}_n)\\
=\left. \left[ \prod_{i=1}^n W_{k^L_i,k^R_i} 
\sum_{\mathbf{l}_{n-1} \in \cart\limits_{i=1}^{n-1} [k^{R}_i,k^{L}_{i+1}-1]}  
\prod_{1 \le i < j \le n-1} W^{-1}_{l_i,l_j} A_{l_i,l_j}  \overline \GT_{h-1}(\mathbf{l}_{n-1}) \right] \right|_{k^L_i=k^R_i=k_i}.
\end{multline} 

Our first goal in this section is to prove Lemma~\ref{pq}, where we perform the telescoping step that happened in the proof of the simpler Theorem~\ref{GT} in \eqref{reduced}. There it was possible to conclude immediately that only $n$ of the $2^{n-1}$ terms which emerged from telescoping remain. Eventually, this will also hold here, however, this reduction will only be possible in the final step of the proof of Theorem~\ref{main}, based on Lemma~\ref{signreversing}.

From now on we assume that 
$n+h$ is even until Section~\ref{odd}, where we consider the case $n+h$ is odd. We set 
\begin{equation} 
\label{U} 
U_{x,y} \coloneq W^{-1}_{x,y} A_{x,y} 
\end{equation} 
as well as  
\begin{multline} 
\label{MTsummand}
t_h(\mathbf{l}_{n-1}) \coloneq \prod_{1 \le i < j \le n-1} U_{l_i,l_j}  \overline \GT_{h-1}(\mathbf{l}_{n-1})  \\
=
\prod_{1 \le i < j \le n-1} 
U_{l_i,l_j} \pf_{1 \le i < j \le n+h-2} \left(  
\begin{array}{c|c}  \TGT_{h-1}(\mathbf{l}_{n-1}) &  \RGT_{h-1,h-1}(\mathbf{l}_{n-1})  \\ \hline 
& \T(0)_{h-1} \end{array} \right),
\end{multline} 
which is just the summand in \eqref{rectoshow1}, and also 
\begin{equation}
\label{MTintegral} 
T_{h}(\mathbf{l}_{n-1}) \coloneq   \prod_{1 \le i < j \le n-1} 
U_{l_i,l_j} \pf_{1 \le i < j \le n+h-2} \left(  
\begin{array}{c|c}  \TGT_{h}(\mathbf{l}_{n-1}) &  \RGT_{h,h-1}(\mathbf{l}_{n-1})  \\ \hline 
& \T(0)_{h-1} \end{array} \right),
\end{equation}
which agrees with $t_h(\mathbf{l}_{n-1})$, except for two places where $h-1$ has been replaced by $h$. These definitions are analogous 
to $s_h(\mathbf{l}_{n-1})$ and $S_h(\mathbf{l}_{n-1})$ in the proof of Theorem~\ref{GT} (see \eqref{GTsummand} and \eqref{GTintegral}, respectively) and only differ by the operator $\prod_{1 \le i < j \le n-1} 
U_{l_i,l_j}$. They are also related via $\Delta_{\mathbf{l}_{n-1}} T_h(\mathbf{l}_{n-1}) = t_h(\mathbf{l}_{n-1})$. 

In Lemma~\ref{pq}, we also use an extension of the notation in \eqref{ignore}: We denote by $\mathbf{k}_n^{\mathbf{p}_m,\mathbf{q}_{m-1}}$ the vector $\mathbf{k}_n=(k_1,\ldots,k_n)$ excluding all $k_i$ such that $i$ appears in $\mathbf{p}_m$ or $\mathbf{q}_{m-1}$.

\begin{lemma} 
\label{pq} 
Assuming that $n+h$ is even, the right-hand side of \eqref{rectoshow1} is equal to the sum of expressions of the following form
\begin{multline}
\label{interpret}
\left. 
(-1)^{\sum_{i=1}^m p_i + \sum_{i=1}^{m-1} q_i +1}
\prod_{r=1}^{m-1} 
\prod_{1 \le i < q_r \atop i \not= p_s,q_s} 
U_{k_i,k^{L}_{q_r}} U_{k_i,k^{R}_{q_r}}
\prod_{q_r < j \le n \atop j \not= p_s,q_s} 
U_{k^{L}_{q_r},k_j} U_{k^{R}_{q_r},k_j}  
\sgt_{h}(k^{L}_{q_r},k^{R}_{q_r})    \right|_{k^{L}_{q_r}=k^{R}_{q_r}}  \\
\times \prod_{j \in \bigcup_{i=1}^m [p_i+1,q_i-1]}   \e_{k_j} 
T_{h}(\mathbf{k}_n^{\mathbf{p}_m,\mathbf{q}_{m-1}}), 
\end{multline} 
where the sum is over all $m \ge 1$ and all strictly increasing integers sequence $\mathbf{p}_m$ and 
$\mathbf{q}_{m-1}$ of length $m$ and $m-1$, respectively, such that $\mathbf{q}_{m-1}$ interlaces $\mathbf{p}_m$  in the following 
stricter sense 
$$1 \le p_1 < q_1 < p_2 < q_2 < \ldots < q_{m-1} < p_m \le n,$$
setting $q_m=n+1$.
\end{lemma}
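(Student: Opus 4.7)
The plan is to start from the right-hand side of \eqref{rectoshow1}, telescope the rectangular sum coordinatewise, apply the symmetric/antisymmetric Pfaffian decomposition of Lemma~\ref{gt-step} at every ``collision pair'' produced by telescoping, and collapse the resulting operator products using the two defining properties of annihilating series from Definition~\ref{def:annihilating}.

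First I check $\Delta_{\mathbf{l}_{n-1}}T_h(\mathbf{l}_{n-1})=t_h(\mathbf{l}_{n-1})$: the operator $\prod_{i<j}U_{l_i,l_j}$ is a power series in the $\Delta_{l_i}$'s and commutes with $\Delta_{\mathbf{l}_{n-1}}$, while \eqref{diffgt} and \eqref{diffbinom} show that differencing each Pfaffian entry once in each $l_i$ turns $T_h$ into $t_h$. Telescoping via \eqref{telescoping} in each $l_i$ then rewrites the rectangular sum on the right of \eqref{rectoshow1} as a signed sum of $2^{n-1}$ terms indexed by $c\in\{U,D\}^{n-1}$, with $l_i=k^L_{i+1}$ (sign $+$) if $c_i=U$ and $l_i=k^R_i$ (sign $-$) if $c_i=D$. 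Extending $c$ by sentinels $c_0\coloneq D$ and $c_n\coloneq U$, I take $p_1<\dots<p_m$ (resp.\ $q_1<\dots<q_{m-1}$) to be the D-to-U (resp.\ U-to-D) transitions of the extended sequence; this gives a bijection between the $c$'s and the interlacing tuples $(\mathbf{p}_m,\mathbf{q}_{m-1})$ (with the convention $q_m=n+1$), and a straightforward D-count yields $\#\{i:c_i=D\}=\sum p_i-\sum q_i-1$, matching the stated sign $(-1)^{\sum p_i+\sum q_i+1}$.

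The collision pairs $(l_{q_r-1},l_{q_r})=(k^L_{q_r},k^R_{q_r})$ are pairwise disjoint (U-to-D transitions being non-adjacent in the extended sequence) and are the only places where telescoping forces a coincidence upon specialization $k^L=k^R=k$. At each such pair I decompose $T_h$ into its symmetric and antisymmetric parts in $(l_{q_r-1},l_{q_r})$ exactly as in the proof of Lemma~\ref{gt-step}: the symmetric part forces $q_r-1$ to be matched with $q_r$ in the Pfaffian and factors out $\sgt_h(l_{q_r-1},l_{q_r})$, while the antisymmetric part is genuinely antisymmetric in the pair. Iterating over all $r$ produces $2^{m-1}$ pieces; by Lemma~\ref{asym}\eqref{asym:1} the operator $\prod_{i<j}U_{l_i,l_j}$ preserves each such antisymmetry, so every piece containing at least one antisymmetric factor vanishes after specialization. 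Only the fully symmetric piece survives, and it factors as $\prod_{r=1}^{m-1}\sgt_h(k^L_{q_r},k^R_{q_r})$ times the sub-Pfaffian on the indices outside the collision set $C\coloneq\bigcup_r\{q_r-1,q_r\}$.

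A case analysis of $\prod_{i=1}^nW_{k^L_i,k^R_i}$ based on the run structure of $c$ then handles the ``outer'' operators: inside a D-run only $k^R_i$ appears, so $W_{k^L_i,k^R_i}$ acts as the identity; for $i\in[p_s+1,q_s-1]$ (i.e., strictly inside a U-run) only $k^L_i$ appears, so $W_{k^L_i,k^R_i}=\e_{k^L_i}$, producing after specialization the shift factor $\prod_{j\in\bigcup_s[p_s+1,q_s-1]}\e_{k_j}$; and at collision indices $i=q_r$ both $k^L_{q_r},k^R_{q_r}$ appear, so $W_{k^L_{q_r},k^R_{q_r}}$ absorbs the $W^{-1}_{k^L_{q_r},k^R_{q_r}}$ sitting inside the factor $U_{k^L_{q_r},k^R_{q_r}}$ of $\prod_{i<j}U_{l_i,l_j}$, leaving only $A_{k^L_{q_r},k^R_{q_r}}$. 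Splitting the remaining $\prod_{i<j}U_{l_i,l_j}$ according to whether its indices lie in $C$ or in its complement then yields three pieces: the $C\times C$ piece $\prod_rA_{k^L_{q_r},k^R_{q_r}}\cdot\prod_{r<r'}U_{k^L_{q_r},k^L_{q_{r'}}}U_{k^L_{q_r},k^R_{q_{r'}}}U_{k^R_{q_r},k^L_{q_{r'}}}U_{k^R_{q_r},k^R_{q_{r'}}}$; the $C$-to-complement piece consisting of exactly the ``external'' operators appearing in \eqref{interpret}; and the complement-to-complement piece that merges with the surviving sub-Pfaffian to reassemble $T_h(\mathbf{k}_n^{\mathbf{p}_m,\mathbf{q}_{m-1}})$. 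Finally, Condition~\eqref{1} of Definition~\ref{def:annihilating} reduces each $A_{k^L_{q_r},k^R_{q_r}}\sgt_h(k^L_{q_r},k^R_{q_r})$ to $\sgt_h(k^L_{q_r},k^R_{q_r})$, and Condition~\eqref{2}, applied pair-by-pair (the four operators for $(r,r')$ commute with the $\sgt_h$-factors at other indices), kills the entire $\prod_{r<r'}UUUU$ on $\prod_r\sgt_h(k^L_{q_r},k^R_{q_r})$, leaving precisely the expression \eqref{interpret}. The main obstacle is this final collapse: the two conditions on annihilating series are engineered to simultaneously eliminate all intra-block couplings, and without both of them a residual operator coupling the $q$-block to the remaining indices would survive, obstructing the match with the formula.
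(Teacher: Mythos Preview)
Your argument is correct and follows the paper's route step for step: telescope, encode the $2^{n-1}$ terms by the interlacing pair $(\mathbf{p}_m,\mathbf{q}_{m-1})$, split $S_h$ into its symmetric/antisymmetric parts at each collision pair, read off the action of the outer $W_{k^L_i,k^R_i}$'s according to the run structure, and finally collapse the intra-collision operators using Conditions~\eqref{1} and~\eqref{2} of Definition~\ref{def:annihilating}. One point of order deserves correction, however: the bare operator $\prod_{i<j}U_{l_i,l_j}$ does \emph{not} preserve the antisymmetry in $(l_{q_r-1},l_{q_r})$, because $U_{l_{q_r-1},l_{q_r}}=W^{-1}_{l_{q_r-1},l_{q_r}}A_{l_{q_r-1},l_{q_r}}$ is not symmetric in its two arguments, so Lemma~\ref{asym}\eqref{asym:1} does not apply to it directly. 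The absorption $W_{k^L_{q_r},k^R_{q_r}}\cdot U_{k^L_{q_r},k^R_{q_r}}=A_{k^L_{q_r},k^R_{q_r}}$ that you carry out only in the following paragraph must come \emph{first}; once it is in place the operator acting on $S_h$ is genuinely symmetric in each collision pair and the antisymmetric pieces vanish under $k^L_{q_r}=k^R_{q_r}$. This is precisely the order the paper uses (see the sentence beginning ``As the operator $A_{l_k,l_{k+1}}\prod_{(i,j)\neq(k,k+1)}U_{l_i,l_j}$ is symmetric in $l_k$ and $l_{k+1}$\ldots'').
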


For the proof, we need the following simple auxiliary lemma.

\begin{lemma} 
\label{asym}
Let $p(x,y)$ be a symmetric polynomial in $x$ and $y$. Then the following holds:
\begin{enumerate} 
\item\label{asym:1} 
If $q(x,y)$ is an antisymmetric function in $x$ and $y$, then 
$p(\e_x,\e_y) q(x,y)$ is also antisymmetric in $x$ and $y$.
\item\label{asym:2}
The function $p(\Delta_x,\Delta_y) \sgt_h(x,y)$ can be written as a linear combination of 
$\sgt_g(x,y)$'s for $0 \le g \le h$. 
\end{enumerate} 
\end{lemma}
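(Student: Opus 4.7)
\textbf{Plan for the proof of Lemma~\ref{asym}.} The key structural fact I would use in both parts is that a symmetric polynomial $p(x,y)$ is a polynomial in the elementary symmetric functions $e_1 = x+y$ and $e_2 = xy$. I would apply this with the variables specialized either to shift operators $\e_x,\e_y$ for part~\eqref{asym:1} or to difference operators $\Delta_x,\Delta_y$ for part~\eqref{asym:2}.

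For part~\eqref{asym:1}, let $\sigma$ be the swap operator $\sigma f(x,y) = f(y,x)$. A direct calculation gives $\sigma \e_x = \e_y \sigma$ and $\sigma \e_y = \e_x \sigma$, from which I would verify that both $\e_x + \e_y$ and $\e_x \e_y$ commute with $\sigma$. Since $p(\e_x,\e_y)$ is a polynomial in these two operators, it commutes with $\sigma$ as well. Therefore, if $q(x,y)$ is antisymmetric, i.e.\ $\sigma q = -q$, then
\begin{equation*}
\sigma\bigl(p(\e_x,\e_y) q(x,y)\bigr) = p(\e_x,\e_y) \sigma q(x,y) = -p(\e_x,\e_y) q(x,y),
\end{equation*}
which is exactly the antisymmetry of $p(\e_x,\e_y) q(x,y)$.

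For part~\eqref{asym:2}, I would write $p(\Delta_x,\Delta_y)$ as a polynomial in $\Delta_x+\Delta_y$ and $\Delta_x \Delta_y$, say
\begin{equation*}
p(\Delta_x,\Delta_y) = \sum_{a,b \ge 0} c_{a,b}\, (\Delta_x+\Delta_y)^a (\Delta_x \Delta_y)^b.
\end{equation*}
The crucial input is now \eqref{Bsym1} and \eqref{Bsym2}, which read $\Delta_x \Delta_y \,\sgt_h(x,y) = \sgt_{h-1}(x,y)$ and $(\Delta_x+\Delta_y) \sgt_h(x,y) = -\sgt_{h-1}(x,y)$. Iterating these two identities, I obtain
\begin{equation*}
(\Delta_x+\Delta_y)^a (\Delta_x \Delta_y)^b \sgt_h(x,y) = (-1)^a \sgt_{h-a-b}(x,y),
\end{equation*}
with the convention $\sgt_g = 0$ for $g < 0$. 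Summing over $a,b$ with the coefficients $c_{a,b}$ produces a linear combination of $\sgt_g(x,y)$ with $0 \le g \le h$, as claimed.

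I do not anticipate any real obstacle: both halves reduce cleanly to the observation that a symmetric polynomial in two operators is a polynomial in their sum and product, combined with known commutation/eigen-like identities already recorded in the paper. The only very minor point worth being careful about in part~\eqref{asym:2} is that one must check that the monomials $(\Delta_x+\Delta_y)^a(\Delta_x\Delta_y)^b$ really do commute when acting iteratively on $\sgt_h$ — but since each monomial produces a scalar multiple of some $\sgt_{h-a-b}$, the order of application is irrelevant.
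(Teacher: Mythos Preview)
Your proposal is correct and essentially matches the paper's proof. The only cosmetic difference is in part~\eqref{asym:1}: the paper reduces to the symmetrized monomials $p(x,y)=x^u y^v + x^v y^u$ and checks antisymmetry of $q(x+u,y+v)+q(x+v,y+u)$ by hand, whereas you reduce to the elementary symmetric polynomials and argue via commutation with the swap operator $\sigma$; part~\eqref{asym:2} is identical to the paper's argument.
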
 

\begin{proof} 
As for the first assertion, we may assume without loss of generality that $p(x,y) = x^u y^v + x^v y^u$ for some integers $u,v$. Then 
$p(\e_x,\e_y) q(x,y) = q(x+u,y+v) + q(x+v,y+u)$. Now 
$$
q(x+u,y+v) + q(x+v,y+u) = -q(y+v,x+u) - q(y+u,x+v)
= - q(y+u,x+v) -q(y+v,x+u).
$$

As for the second assertion, it suffices to show the assertion for $p(x,y)= x y$ and $p(x,y)=x+y$ as every symmetric polynomial in $x,y$ 
is expressible as a polynomial in these two polynomials. The assertion now follows from 
\eqref{Bsym1}  and \eqref{Bsym2}.
\end{proof} 

\begin{proof}[Proof of Lemma~\ref{pq}]
From the right-hand side of \eqref{rectoshow1}, we first consider 
$$
\sum_{\mathbf{l}_{n-1} \in \cart\limits_{i=1}^{n-1} [k^{R}_i,k^{L}_{i+1}-1]}  t_h(\mathbf{l}_{n-1}) = 
\sum_{\mathbf{l}_{n-1} \in \cart\limits_{i=1}^{n-1} [k^{R}_i,k^{L}_{i+1}-1]} \Delta_{\mathbf{l}_{n-1}} T_h(\mathbf{l}_{n-1}).
$$ 
By telescoping, this is equal to 
\begin{equation}
\label{inter}
\sum_{(s_1,\ldots,s_{n-1}) \in \{L,R\}^{n-1}} (-1)^{\{\# i: s_i=R\}} 
T_{h}(k^{s_1}_{1+[s_1=L]},k^{s_2}_{2+[s_2=L]},\ldots,k^{s_{n-1}}_{{n-1}+[s_{n-1}=L]}), 
\end{equation}
where the \Dfn{Iverson bracket} is defined as $[\text{statement}]=1$ if the statement is true and $0$ otherwise.

We encode $(s_1,\ldots,s_{n-1}) \in \{L,R\}^{n-1}$ as follows:
\begin{itemize}
	\item Let $1\le p_1 < p_2 < \ldots < p_m \le n$ denote all indices
	$p$ such that $s_{p-1}=R$ and $s_{p}= L$. Additionally, include $p=1$ if $s_1=L$, and $p=n$ if $s_{n-1}=R$. These indices indicate that both $k^{L}_p$ and $k^{R}_p$ are missing from the argument of $T_{h}$.
	\item Similarly, let $2 \le q_1 < q_2 < \ldots < q_{m'} \le n-1$ denote all indices $q$ such that $s_{q-1}=L$ and $s_q=R$. These indices correspond to positions where both $k^{L}_q$ and $k^{R}_q$ appear in the argument of $T_{h}$.
\end{itemize}
Note that $m'=m-1$ and 
$$p_1 < q_1 < p_2 < q_2 < \ldots < q_{m-1} < p_m.$$

We recast the sign $(-1)^{\{\# i: s_i=R\}}$ due to the following observation: The number of $R$'s among $s_1,\ldots,s_{n-1}$ is given by
$$p_1-1+\left[ (p_2-1)-(q_1-1) \right]+ \left[ (p_3-1)-(q_2-1) \right]+\ldots+ \left[ (p_{m}-1)-(q_{m-1}-1) \right]=\sum_{i=1}^m p_i - \sum_{i=1}^{m-1} q_i -1.$$
Thus, the sign can be expressed as $(-1)^{\sum_{i=1}^m p_i + \sum_{i=1}^{m-1} q_i +1}$. The expression in \eqref{inter} can now be rewritten as 
\begin{multline}
\label{integral} 
(-1)^{\sum_{i=1}^m p_i + \sum_{i=1}^{m-1} q_i +1}\\
\times T_h(k_1^R,\ldots,k_{p_1-1}^R,k_{p_1+1}^L,\ldots,k_{q_1}^L,k_{q_1}^R,\ldots,k_{q_{m-1}}^L,k_{q_{m-1}}^R,\ldots,k_{p_{m}-1}^R,k_{p_{m}+1}^L,\ldots,k_n^L).
\end{multline}

Regarding the right-hand side of \eqref{rectoshow1}, the operator $W_{k^L_{q_i},k^R_{q_i}}$ has not yet been taken into account. Therefore, we next simplify $\left. W_{l_k,l_{k+1}} T_{h}(\mathbf{l}_{n-1}) \right\vert_{l_{k}=l_{k+1}}$. The result will be applied to all $(l_k,l_{k+1})=(k^L_{q_i},k^R_{q_i})$ for $i=1,\ldots,m-1$. By the definition of $T_{h}(\mathbf{l}_{n-1})$, we have 
$$
W_{l_k,l_{k+1}} T_{h}(\mathbf{l}_{n-1}) = A_{l_k,l_{k+1}} \prod_{1 \le i < j \le n-1 \atop (i,j) \not= (k,k+1)}  U_{l_i,l_j} S_{h}(\mathbf{l}_{n-1}).
$$
We know from \eqref{decompose} that we can decompose $S_{h}(\mathbf{l}_{n-1})$ into the sum of an antisymmetric function in $l_k$ and $l_{k+1}$, and 
\begin{equation}
\label{symF} 
\sgt_{h}(l_{k},l_{k+1}) S_{h}(\mathbf{l}^{k,k+1}_{n-1}).
\end{equation} 
As the operator 
$$
A_{l_k,l_{k+1}} \prod_{1 \le i < j \le n-1 \atop (i,j) \not= (k,k+1)} U_{l_i,l_j} 
$$
is symmetric in $l_k$ and $l_{k+1}$, the antisymmetric part of $S_{h}(\mathbf{l}_{n-1})$ vanishes by Lemma~\ref{asym} after applying the operator and letting $l_{k}=l_{k+1}$. 

Now we include $\prod_{i=1}^n W_{k_i^L,k_i^R}$ as well as the identification $k_i^L=k_i^R=k_i$ from the right-hand side of \eqref{rectoshow1} in \eqref{integral}, and then apply the conclusion from the previous paragraph to all pairs $(l_k,l_{k+1})=(k^L_{q_i},k^R_{q_i})$. In \eqref{penultimate} below, we additionally rearrange the operators that come from $T_h$ in \eqref{integral}, namely
$$
\left. \prod_{1 \le i < j \le n-1} U_{l_i,l_j} \right|_{\mathbf{l}_{n-1}=(k_1,\ldots,k_{p_1-1},k_{p_1+1},\ldots,k_{q_1-1},k_{q_1}^L,k_{q_1}^R,k_{q_1+1},\ldots,k_{q_{m-1}-1},k_{q_{m-1}}^L,k_{q_{m-1}}^R,k_{q_{m-1}+1},\ldots,k_{p_{m}-1},k_{p_{m}+1},\ldots,k_n)}
$$
in the following way: in the first line, we list all operators involving precisely one of $k_{q_r}^L$ or $k_{q_r}^R$; in the second line, we have all operators that involve two of these; the remaining operators feature in $T_{h}(\mathbf{k}_n^{\mathbf{p}_m,\mathbf{q}_{m-1}})$ in the third line. We still need to keep track of $k_{q_i}^R$ and $k_{q_i}^L$ because both parameters appear simultaneously. For all other pairs $(k_i^R,k_i^L)$, we can apply the identification $k_i^L=k_i^R=k_i$ everywhere since at most one of these parameters appears at a time.
More precisely, we obtain  
\begin{multline} 
\label{penultimate} 
(-1)^{\sum_{i=1}^m p_i + \sum_{i=1}^{m-1} q_i +1}
\prod_{r=1}^{m-1} 
\prod_{1 \le i < q_r \atop i \not= p_s,q_s} 
U_{k_i,k^{L}_{q_r}} U_{k_i,k^{R}_{q_r}}
\prod_{q_r < j \le n \atop j \not= p_s,q_s} 
U_{k^{L}_{q_r},k_j} U_{k^{R}_{q_r},k_j} \\
\left.  
\prod_{1 \le i < j \le m-1} U_{k^{L}_{q_i},k^{L}_{q_j}} 
U_{k^{L}_{q_i},k^{R}_{q_j}}
U_{k^{R}_{q_i},k^{L}_{q_j}}
U_{k^{R}_{q_i},k^{R}_{q_j}}
\prod_{r=1}^{m-1}  A_{k^{L}_{q_r},k^{R}_{q_r}} \sgt_{h}(k^{L}_{q_r},k^{R}_{q_r})    \right|_{k^{L}_{q_r}=k^{R}_{q_r}=k_{q_r}} \\
\times \prod_{j \in \bigcup_{i=1}^m [p_i+1,q_i-1]}   \e_{k_j} 
T_{h}(\mathbf{k}_n^{\mathbf{p}_m,\mathbf{q}_{m-1}}).
\end{multline} 
Note that for $\prod_{i \not= q_s} W_{k^L_i,k^R_i}$, we use the fact that $W_{x,y}$ acts as the identity on functions independent of $x$ and as the  
shift operator $\e_x$ on functions independent of $y$; we encode this by $\prod_{j \in \bigcup_{i=1}^m [p_i+1,q_i-1]}   \e_{k_j}$.

Clearly, Conditions~\eqref{1} and \eqref{2} for annihilating formal power series in Definition~\ref{def:annihilating} are  ``made'' for simplifying \eqref{penultimate} so that we obtain the one given in the statement of the lemma.
 
To explain the subtlety why we  omit $k_{q_r}$ when specializing $k^{L}_{q_r}=k^{R}_{q_r}=k_{q_r}$ in the statement, we explain why the last displayed expression is actually independent of $k_{q_r}$. As the operator that is applied to the product 
$\prod_{r=1}^{m-1}  \sgt_{h}(k^{L}_{q_r},k^{R}_{q_r})$
is symmetric in $k^L_{q_r}$ and $k^R_{q_r}$ for every $r$, the application of the operator leads to an expression that is expressible as linear 
combinations of products of the form 
$\prod_{r=1}^{m-1}  \sgt_{h_r}(k^{L}_{q_r},k^{R}_{q_r})$
by Lemma~\ref{asym}.
Our claim follows after observing that $\sgt_{h_r}(k,k)=0$ for $h_r \not= 0$ and $\sgt_{0}(k,k)=1$.
\end{proof}

We will eventually show that for a fixed sequence $\mathbf{q}_{m-1}$, the sum of expressions in \eqref{interpret} over all $\mathbf{p}_{m}$ that are interlaced by $\mathbf{q}_{m-1}$ in the strict sense vanishes if $m \ge 2$, while for $m=1$ this sum is just the left hand side of \eqref{rectoshow1}. In order to perform the sum over all $\mathbf{p}_{m}$, it is necessary to get rid of a certain type of dependence of the operators on $\mathbf{p}_{m}$, which is the purpose of the following lemma.

\begin{lemma} 
\label{regrouping}
For $n \ge 1$, it holds
\begin{multline*} 
\prod_{j \in \bigcup_{i=1}^m [p_i+1,q_i-1]}   \e_{k_j} \left. \prod_{1 \le i < j \le n-2m+1} U_{l_i,l_j} \right|_{\mathbf{l}_{n-2m+1}=(k_1,\ldots,\widehat{k_{p_1}},\ldots,\widehat{k_{q_1}},\ldots,\widehat{k_{q_{m-1}}},\ldots,\widehat{k_{p_{m}}},\ldots,k_n)}  \\
=  
\prod_{r=1}^{m-1}  \prod_{i<q_r \atop i \not=p_s,q_s}  \e_{k_i}^2  
\prod_{i=1}^n A^{-2m+1}_{k_i,\bullet}
\prod_{1 \le i < j \le n} U_{k_i,k_j}
\end{multline*}
if applied to a function independent of $k_{p_s}$ and $k_{q_s}$ for all $s$ and
where $\bullet$ stands for any parameter which also does not appear in this function.
\end{lemma}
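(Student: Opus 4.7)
The strategy is a direct operator calculation exploiting two facts: (a) all of the operators $U_{k_i,k_j}$ commute, since each is a polynomial expression in the pairwise commuting difference operators $\Delta_{k_\ell}$; and (b) each factor $U_{k_i,k_j}$ simplifies drastically whenever $i$ or $j$ indexes a variable on which the target function is constant.

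Let $S=\{p_1,q_1,\ldots,q_{m-1},p_m\}$ be the set of removed indices and $R=[n]\setminus S$ the surviving ones, so $|R|=n-2m+1$. I would decompose $\prod_{1\le i<j\le n}U_{k_i,k_j}=H\cdot G$, where $H=\prod_{i<j,\,i,j\in R}U_{k_i,k_j}$ and $G$ collects the remaining factors with at least one index in $S$. Since everything commutes, the order is irrelevant.

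Next, I would evaluate $Gf$ pair by pair using $\Delta_{k_s}f=0$ for $s\in S$ and the symmetry $A(x,y)=A(y,x)$. A short case analysis gives: if $i,j\in S$ then $U_{k_i,k_j}$ acts as the identity; if $i\in S,\,j\in R$ then it acts as $A(0,\Delta_{k_j})$; and if $i\in R,\,j\in S$ then it acts as $\e_{k_i}^{-1}A(\Delta_{k_i},0)$. Writing $\tilde A(z)\coloneq A(z,0)=A(0,z)$ and $n_r^+\coloneq\#\{s\in S:s>r\}$, collecting contributions for each $r\in R$ yields $Gf=\prod_{r\in R}\e_{k_r}^{-n_r^+}\tilde A(\Delta_{k_r})^{2m-1}f$. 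Solving for $Hf$, reinstating the prefactor $\prod_{j\in T}\e_{k_j}$ (with $T=\bigcup_{i=1}^{m}[p_i+1,q_i-1]$, $q_m=n+1$), and using that $A^{-(2m-1)}_{k_s,\bullet}=\tilde A(\Delta_{k_s})^{-(2m-1)}$ acts as the identity on functions independent of $k_s$ (because $A(0,0)=1$), the left-hand side rewrites as $\prod_{j\in T}\e_{k_j}\prod_{r\in R}\e_{k_r}^{n_r^+}\prod_{i=1}^{n}A^{-(2m-1)}_{k_i,\bullet}\prod_{1\le i<j\le n}U_{k_i,k_j}f$, so the $A$-factor of the right-hand side is already in place.

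The remaining step is the shift bookkeeping: verifying that, when acting on $f$, the composite $\prod_{j\in T}\e_{k_j}\cdot\prod_{r\in R}\e_{k_r}^{n_r^+}$ coincides with $\prod_{r=1}^{m-1}\prod_{i<q_r,\,i\ne p_s,q_s}\e_{k_i}^2$. Since shifts in $k_s$ for $s\in S$ act trivially on $f$, it suffices to match exponents of $\e_{k_i}$ on each $i\in R$. I would partition $R$ into the blocks $[1,p_1-1]$, $[p_j+1,q_j-1]$, $[q_j+1,p_{j+1}-1]$, and $[p_m+1,n]$, and use the strict interlacing $p_1<q_1<\cdots<q_{m-1}<p_m$ to compute both sides explicitly in each case. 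This final piece of bookkeeping is where the delicate part lies, and I expect it to be the main obstacle; the rest of the argument is a mechanical exploitation of commutativity together with the two identities $\Delta_{k_s}f=0$ and $A(0,0)=1$.
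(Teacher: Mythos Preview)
Your approach is essentially identical to the paper's proof: both rewrite the restricted product $H = \prod_{i<j,\,i,j\in R} U_{k_i,k_j}$ as $G^{-1}\prod_{1\le i<j\le n}U_{k_i,k_j}$ (modulo the harmless $S$--$S$ factors) and then simplify each cross factor using $\Delta_{k_s}\equiv 0$ on the target function. Your case analysis and the formula $Gf=\prod_{r\in R}\e_{k_r}^{-n_r^+}\tilde A(\Delta_{k_r})^{2m-1}f$ are correct, and you are right that the only nontrivial part is the shift bookkeeping.

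There is, however, a genuine wrinkle you should be aware of before you ``compute both sides explicitly in each case'': the identity as literally stated does \emph{not} balance. A short check (take $n=2$, $m=1$, $p_1\in\{1,2\}$) shows that, on any function independent of the $k_s$, the left-hand side exceeds the right-hand side by exactly one global shift $\prod_{i\in R}\e_{k_i}$; the block-by-block count you propose gives $[i\in T]+n_i^+=2\,\#\{r:q_r>i\}+1$ uniformly on $R$, not $2\,\#\{r:q_r>i\}$. The paper's own proof makes the same tacit slip in the ``redistributing the shifts'' step. This discrepancy is harmless in the application: on functions independent of the $k_s$ one has $\prod_{i\in R}\e_{k_i}=\e_{\mathbf{k}_n}$, which is independent of the particular choice of $\mathbf{p}_m,\mathbf{q}_{m-1}$ and therefore factors through the subsequent Laplace sum over $\mathbf{p}_m$; at the end it hits either $\overline{\GT}_h(\mathbf{k}_n)$ (for $m=1$), where it acts as the identity by translation invariance \eqref{constant}, or an expression shown to vanish (for $m\ge2$). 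So your plan works, but you should expect to find this uniform offset and then argue it away rather than obtain an exact match.
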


\begin{proof}
The left-hand side of the identity in the lemma can be written as 
%
	$$
	\prod_{j \in \bigcup_{i=1}^m [p_i+1,q_i-1]}   \e_{k_j}
	\prod_{r=1}^m \prod_{i<p_r \atop i \not=p_s,q_s} U^{-1}_{k_i,k_{p_r}} 
	\prod_{p_r<j \atop j \not=p_s,q_s} U^{-1}_{k_j,k_{p_r}} 
	\prod_{r=1}^{m-1}  \prod_{i<q_r \atop i \not=p_s,q_s} U^{-1}_{k_i,k_{q_r}}   
	\prod_{q_r<j \atop j \not=p_s,q_s} U^{-1}_{k_j,k_{q_r}}  
	\prod_{1 \le i < j \le n} U_{k_i,k_j} 
	$$
taking into account the fact that $U_{x,y}$ acts like the identity on functions independent of $x$ and $y$. 
As $W_{x,y}$ acts on functions independent of $y$ as $\e_x$ and 
on functions independent of $x$ as the identity, this simplifies as follows:
$$
\prod_{j \in \bigcup_{i=1}^m [p_i+1,q_i-1]}   \e_{k_j} 
\prod_{r=1}^m \prod_{i<p_r \atop i \not=p_s,q_s}  \e_{k_i} A^{-1}_{k_i,k_{p_r}}
\prod_{p_r<j \atop j \not=p_s,q_s}  A^{-1}_{k_{p_r},k_{j}}
\prod_{r=1}^{m-1}  \prod_{i<q_r \atop i \not=p_s,q_s}  \e_{k_i} A^{-1}_{k_i,k_{q_r}} 
\prod_{q_r<j \atop j \not=p_s,q_s}  A^{-1}_{k_{q_r},k_{j}} 
\prod_{1 \le i < j \le n} U_{k_i,k_j} 
$$
Due to the symmetry of $A(x,y)$ and by redistributing the shifts in  $\prod_{j \in \bigcup_{i=1}^m [p_i+1,q_i-1]} \e_{k_j}$, this can alternatively be written as 
$$
\prod_{r=1}^m
\prod_{i \not=p_s,q_s}  A^{-1}_{k_i,k_{p_r}}
\prod_{r=1}^{m-1}  \prod_{i<q_r \atop i \not=p_s,q_s}  \e_{k_i}^2 
\prod_{i \not=p_s,q_s}  A^{-1}_{k_i,k_{q_r}}  
\prod_{1 \le i < j \le n} U_{k_i,k_j}. 
$$
Finally note that, under the assumption in the lemma, we have 
$$
 \prod_{r=1}^m
\prod_{i \not=p_s,q_s}  A^{-1}_{k_i,k_{p_r}}
\prod_{r=1}^{m-1} 
\prod_{i \not=p_s,q_s}  A^{-1}_{k_i,k_{q_r}} = \prod_{i=1}^n A^{-2m+1}_{k_i,\bullet},  
$$
and the assertion follows.
\end{proof} 

Applying the previous lemma to the expression in the statement of Lemma~\ref{pq}, we obtain 
\begin{multline}
\label{applied}
\left. (-1)^{\sum_{i=1}^m p_i + \sum_{i=1}^{m-1} q_i +1}
\prod_{r=1}^{m-1} 
\prod_{1 \le i < q_r \atop i \not= p_s,q_s} 
\e_{k_i}^2 
U_{k_i,k^{L}_{q_r}} U_{k_i,k^{R}_{q_r}}
\prod_{q_r < j \le n \atop j \not= p_s,q_s} 
U_{k^{L}_{q_r},k_j} U_{k^{R}_{q_r},k_j}  
\sgt_{h}(k^{L}_{q_r},k^{R}_{q_r})    \right|_{k^{L}_{q_r}=k^{R}_{q_r}}  \\
\times \prod_{i=1}^n A^{-2m+1}_{k_i,\bullet} \prod_{1 \le i < j \le n} U_{k_i,k_j}
S_{h}(\mathbf{k}_n^{\mathbf{p}_m,\mathbf{q}_{m-1}}).
\end{multline} 

Our next task is to understand 
\begin{equation} 
\label{remainingoperator}
\left.
\prod_{r=1}^{m-1} 
\prod_{1 \le i < q_r \atop i \not= p_s,q_s} 
\e_{k_i}^2 
U_{k_i,k^{L}_{q_r}} U_{k_i,k^{R}_{q_r}}
\prod_{q_r < j \le n \atop j \not= p_s,q_s} 
U_{k^{L}_{q_r},k_j} U_{k^{R}_{q_r},k_j}  
\sgt_{h}(k^{L}_{q_r},k^{R}_{q_r})    \right|_{k^{L}_{q_r}=k^{R}_{q_r}}
\end{equation} 
better. Note that it is an operator that acts on the variables $k_i$ for $i \not= p_s,q_s$. For this purpose, we now discuss, for fixed $i$, the action of the operator
\begin{equation} 
\label{op} 
V_{k_i} \coloneq \prod_{r: i < q_r} \e_{k_i}^2 U_{k_i,k^{L}_{q_r}} U_{k_i,k^{R}_{q_r}} \prod_{r: q_r < i} U_{k^{L}_{q_r},k_i} U_{k^{R}_{q_r},k_i}
\end{equation} 
on the product $\prod_{r=1}^{m-1} \sgt_{h}(k^{L}_{q_r},k^{R}_{q_r})$. For every $r \in \{ 1,\dots,m-1\}$, the operator is symmetric in 
$k^L_{q_r}$ and $k^R_{q_r}$; it can therefore be expanded in the basis 
$$
(\Delta_{k^L_{q_r}} \Delta_{k^R_{q_r}})^u (-\Delta_{k^L_{q_r}}-\Delta_{k^R_{q_r}})^v
$$
for $u,v \ge 0$. By \eqref{Bsym1} and \eqref{Bsym2}, the effect of this basis element on $\sgt_{h}(k^{L}_{q_j},k^{R}_{q_j})$ is to decrease $h$ by $u+v$; we will see that for this reason it is useful to replace this basis element in the expansion of $V_{k_i}$ by $z_{j}^{u+v}$. By letting 
$$
U(x,y) \coloneq (1+ x + x y)^{-1} A(x,y),
$$
so that $U_{x,y} = U(\Delta_x,\Delta_y)$, 
compare also with $\eqref{U}, \eqref{W}, \eqref{A}$, 
this replacement in 
$V_{k_i}$ can also be written formally as
\begin{equation}
\label{defv}
 \prod_{r: i < q_r}  \left[ \e_{k_i}^2 U(\Delta_{k_i},y) U(\Delta_{k_i},\iota(y)) \right]_{-y-\iota(y)=y \cdot \iota(y) = z_{q_r}}  \prod_{r: q_r < i} 
 \left[ U(y,\Delta_{k_i}) U(\iota(y),\Delta_{k_i}) \right]_{-y-\iota(y)=y \cdot \iota(y) = z_{q_r}},
\end{equation} 
see also the proof of Lemma~\ref{characterize}. 
We let $V_{k_i}(z_{q_1},\ldots,z_{q_{m-1}})$ denote the polynomial in $z_{q_1}$,\ldots,\allowbreak$z_{q_{m-1}}$ that we obtain by interpreting 
$V_{k_i}$ this way.  
As $\sgt_g(k,k)= [g=0]$, we can replace 
$$
\left. 
\prod_{i \not= p_s,q_s} V_{k_i} \prod_{i=1}^{m-1}  \sgt_{h}(k^{L}_{q_i},k^{R}_{q_i}) \right|_{k^{L}_{q_r}=k^{R}_{q_r}}
$$
by 
$$
\langle z_{q_1}^{h} z_{q_2}^{h} \dots z_{q_{m-1}}^{h} \rangle \prod_{i \not= p_s,q_s} V_{k_i}(z_{q_1},\ldots,z_{q_{m-1}}),
$$
where $\langle z_{q_1}^{h} z_{q_2}^{h} \dots z_{q_{m-1}}^{h} \rangle$ extracts the coefficient of $z_{q_1}^{h} z_{q_2}^{h} \dots z_{q_{m-1}}^{h}$ of 
the subsequent expression.
Thus, \eqref{applied} simplifies as follows: 
\begin{multline*}
(-1)^{\sum_{i=1}^m p_i + \sum_{i=1}^{m-1} q_i +1} \langle z_{q_1}^{h} z_{q_2}^{h} \dots z_{q_{m-1}}^{h} \rangle \prod_{i \not= p_s,q_s}  V_{k_i}(z_{q_1},\ldots,z_{q_{m-1}})  \\
\times \prod_{i=1}^n A^{-2m+1}_{k_i,\bullet}
\prod_{1 \le i < j \le n} U_{k_i,k_j} 
S_{h}(\mathbf{k}_n^{\mathbf{p}_m,\mathbf{q}_{m-1}}). 
\end{multline*}
 
It will be convenient to extend the operator product $\prod_{i \not= p_s,q_s}  V_{k_i}(z_{q_1},\ldots,z_{q_{m-1}})$ also over all $i=p_s$ and for this purpose, we need the following lemma.
 
\begin{lemma} 
\label{lem:independent}
 The actions of 
 $$
 \left. U_{k_i,k^L_{q_r}} U_{k_i,k^R_{q_r}} \sgt_{h}(k^L_{k_{q_r}},k^R_{k_{q_r}}) \right|_{k^L_{q_r}=k^R_{q_r}} 
 \quad  \text{and} \quad 
 \left. U_{k^L_{q_r},k_i} U_{k^R_{q_r},k_i} \sgt_{h}(k^L_{k_{q_r}},k^R_{k_{q_r}}) \right|_{k^L_{q_r}=k^R_{q_r}} 
 $$
 as operators in $k_i$ are the identity if applied to functions independent of $k_i$.
\end{lemma}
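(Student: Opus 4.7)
The plan is to exploit that any function independent of $k_i$ is annihilated by $\Delta_{k_i}$, which drastically simplifies both $W^{-1}_{k_i,\bullet}$ and $A_{k_i,\bullet}$. Once this reduction is made, both statements of the lemma come down to a single identity for the symmetrized factor $A(0,y)A(0,\iota(y))$, whose verification is the main obstacle.

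First I would argue that for any function $G$ independent of $k_i$,
\[
W^{-1}_{k_i, k^L_{q_r}} G = G \quad \text{and} \quad A_{k_i, k^L_{q_r}} G = A(0, \Delta_{k^L_{q_r}}) G,
\]
since $W_{k_i,k^L_{q_r}} = 1 + \Delta_{k_i}(1+\Delta_{k^L_{q_r}})$ collapses to $1$ on such $G$ and only the $\Delta_{k_i}^0$-terms of the formal power series $A(\Delta_{k_i},\Delta_{k^L_{q_r}})$ survive. As the resulting expression $A(0,\Delta_{k^L_{q_r}})\sgt_h(k^L_{q_r},k^R_{q_r})F$ is again independent of $k_i$, the same reasoning applied to $U_{k_i,k^R_{q_r}}$ gives
\[
U_{k_i,k^L_{q_r}} U_{k_i,k^R_{q_r}} \bigl(\sgt_h(k^L_{q_r},k^R_{q_r}) F\bigr) = A(0,\Delta_{k^L_{q_r}}) A(0,\Delta_{k^R_{q_r}}) \sgt_h(k^L_{q_r},k^R_{q_r}) \cdot F
\]
for any $F$ independent of $k_i$.

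The hard part is then the identity $A(0,\Delta_{k^L_{q_r}}) A(0,\Delta_{k^R_{q_r}}) \sgt_h(k^L_{q_r},k^R_{q_r}) = \sgt_h(k^L_{q_r},k^R_{q_r})$. To establish it I would use that $\sgt_h(x,y)$ depends only on $u \coloneq x-y$, so $\Delta_{k^L_{q_r}}$ acts on such functions as $\e_u - \id$ while $\Delta_{k^R_{q_r}}$ acts as $\e_u^{-1} - \id$. A direct computation gives
\[
\iota(\Delta_u) = -\frac{\Delta_u}{1+\Delta_u} = \e_u^{-1} - \id,
\]
so $\iota(\Delta_{k^L_{q_r}}) = \Delta_{k^R_{q_r}}$ as operators on functions of $u$. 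Specializing the functional equation \eqref{equation} of Lemma~\ref{characterize}\eqref{characterize:2} at $x_1 = 0$ (using $\iota(0)=0$) collapses the right-hand side to $1$, yielding $\bigl(A(0,y) A(0,\iota(y))\bigr)^2 = 1$; together with $A(0,0)=1$ from Lemma~\ref{characterize}\eqref{characterize:1} this forces $A(0,y) A(0,\iota(y)) = 1$ as formal power series. Substituting $y = \Delta_{k^L_{q_r}}$ and interpreting the identity on functions of $u$ gives the claim, and the subsequent restriction $k^L_{q_r} = k^R_{q_r}$ is preserved trivially.

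For the second assertion, the only modification is that $W_{k^L_{q_r},k_i} = 1 + \Delta_{k^L_{q_r}}(1+\Delta_{k_i})$ reduces on functions independent of $k_i$ not to the identity but to $\e_{k^L_{q_r}}$, so $W^{-1}_{k^L_{q_r},k_i}$ now contributes an extra $\e_{k^L_{q_r}}^{-1}$ (and analogously for the $k^R_{q_r}$ factor). Using the symmetry $A(y,0) = A(0,y)$, the previous computation now yields
\[
U_{k^L_{q_r},k_i} U_{k^R_{q_r},k_i} \bigl(\sgt_h(k^L_{q_r},k^R_{q_r}) F\bigr) = \e_{k^L_{q_r}}^{-1} \e_{k^R_{q_r}}^{-1} \sgt_h(k^L_{q_r},k^R_{q_r}) \cdot F.
\]
Since $\sgt_h(x,y)$ depends only on $x - y$, the joint shift $\e_{k^L_{q_r}}^{-1}\e_{k^R_{q_r}}^{-1}$ also acts as the identity on $\sgt_h$, and we conclude exactly as in the first case.
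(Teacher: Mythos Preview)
Your proof is correct and follows essentially the same route as the paper's. Both arguments reduce the claim to the single identity $A(0,y)\,A(0,\iota(y))=1$, established by specializing the functional equation for annihilating series (the paper's condition~\eqref{2}, equivalently equation~\eqref{equation}) and then pinning down the sign via the constant term $A(0,0)=1$. The paper packages this as showing that the constant term of $u(x,z)=\bigl[U(x,y)U(x,\iota(y))\bigr]_{y\iota(y)=-y-\iota(y)=z}$ is $1$, invoking $u(x,z)u(\iota(x),z)=1$; you instead separate the $W^{-1}$ and $A$ contributions at the outset, use explicitly that $\sgt_h$ depends only on the difference $k^L_{q_r}-k^R_{q_r}$ (so that $\Delta_{k^R_{q_r}}=\iota(\Delta_{k^L_{q_r}})$ on it), and handle the extra $\e^{-1}$ shifts in the second operator by hand rather than leaving it as ``analogous''. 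These are cosmetic differences; the mathematical core is identical.
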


\begin{proof} 
 We prove it for the first operator, since the proof is analogous for the second operator.
 Let again 
 $$
 U(x,y) = (1+x+x y)^{-1} A(x,y).
  $$
 Then we need to show that the constant term of  
 $$
 u(x,z)\coloneq \left. U(x,y) U(x,\iota(y)) \right|_{-y-\iota(y)=y \cdot \iota(y) = z}
 $$
as a formal power series in $x$ and $z$ is $1$. 
By the second condition \eqref{2} for annihilating formal power series, we have 
$u(x,z) u(\iota(x),z)=1$. It is not difficult to see that a formal power series $v(x)$ with 
$v(x) v(\iota(x))=1$ has constant term $\pm 1$, and since the constant term of $A(x,y)$ and 
$(1+x+x y)^{-1}$ is $1$ as well, the lemma is proved. 
 \end{proof} 
 
In total, we obtain
\begin{equation*}
(-1)^{\sum_{i=1}^m p_i + \sum_{i=1}^{m-1} q_i +1} \langle z_{q_1}^{h} z_{q_2}^{h} \dots z_{q_{m-1}}^{h} \rangle 
\prod_{i \not= q_s}  V_{k_i}(z_{q_1},\ldots,z_{q_{m-1}})  
\prod_{i=1}^n A^{-2m+1}_{k_i,\bullet}
\prod_{1 \le i < j \le n} U_{k_i,k_j} 
 S_{h}(\mathbf{k}_n^{\mathbf{p}_m,\mathbf{q}_{m-1}})
\end{equation*} 
for \eqref{interpret} in Lemma~\ref{pq}. 
Now we sum this expression for given $q_1,q_2,\ldots,q_{m-1}$ with $1 < q_1 < q_2 < \ldots < q_{m-1} < n$ over all 
 $p_1,\ldots,p_m$ with $1 \le p_1 < q_1 < p_2 < q_2 < \ldots < q_{m-1} < p_m \le n$. 
  
 We introduce the following useful notation:  Let $1 \le a \le b \le n$ be integers and $Q \subseteq [n]$, then we define 
 a $\{0,1\}$-vector of length $n-|Q|$ as
 $$\left\{ a \atop b\right\}_{n,Q} \coloneq ([a \le i \le b])_{i \in [n] \setminus Q},$$
 where we have used the Iverson bracket in ``$[a \le i \le b]$''. We set $q_0=0$ and $q_m=n+1$. Then, by the Laplace expansion for Pfaffians \eqref{expansion}, the sum over the $p_i$ can be written as 
 \begin{multline} 
 \label{laplace}
 (-1)^{\sum_{i=1}^{m-1} q_i +m+1} 
 \langle z_{q_1}^{h} z_{q_2}^{h} \dots z_{q_{m-1}}^{h} \rangle 
 \prod_{i \not= q_s}  V_{k_i}(z_{q_1},\ldots,z_{q_{m-1}})  
\prod_{i=1}^n A^{-2m+1}_{k_i,\bullet} \\
\prod_{1 \le i < j \le n} U_{k_i,k_j}
\pf \left( 
\begin{array}{c|c}  \TGT_{h}(\mathbf{k}_n^{\mathbf{q}_{m-1}}) & 
\begin{array}{c|c}
\RGT_{h,h-1}(\mathbf{k}_n^{\mathbf{q}_{m-1}}) 
& \left( \left\{q_{j-1}+1 \atop q_j-1 \right\}_{n,Q} \right)_{1 \le j \le m} 
\end{array}  \\ \hline 
& \T(0)_{h-1+m}  \end{array} \right)
\end{multline}
with $Q=\{q_1,\ldots,q_{m-1}\}$.  

Observe that, for $m=1$, the Pfaffian is just $\overline \GT_h(\mathbf{k}_n)$. Thus, Theorem~\ref{main} is proven as soon as we have shown the following two things: (i) the expression vanishes for fixed $q_1,\ldots,q_{m-1}$ when $m>1$ and (ii) Lemma~\ref{one}.

\begin{lemma} 
\label{one}
Suppose $h$ is a non-negative integer and $n$ is a positive integer. Then 
$$
\prod_{i=1}^n A^{-1}_{k_i,\bullet} \overline \GT_h(\mathbf{k}_n) = \overline \GT_h(\mathbf{k}_n).
$$
\end{lemma}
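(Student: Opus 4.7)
The plan is to reduce the claim to Lemma~\ref{fund} via a clean factorization of $f(x):=A(x,0)$. Because $\overline{\GT}_h(\mathbf{k}_n)$ is independent of the auxiliary parameter $\bullet$, the operator $\Delta_\bullet$ annihilates it, so $A^{-1}_{k_i,\bullet}$ acts on $\overline{\GT}_h(\mathbf{k}_n)$ simply as $f(\Delta_{k_i})^{-1}$. Appealing to Proposition~\ref{hiddencharacterize}~\eqref{hiddencharacterize:2} together with $\iota(0)=0$, specializing $y=0$ in the standard form of an annihilating series collapses the two arguments of each $P$-quotient (the factors $P_0(0,x)$ and $P(0,x)$ in numerator and denominator cancel) and yields
\[
f(x) \;=\; (1+x)\,\frac{g(x)}{g(\iota(x))}, \qquad g(x) := P_0(x,0)\,P(x,0),
\]
a formal power series with $g(0)\neq 0$. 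Using $1+\Delta_{k_i}=\e_{k_i}$ and $\iota(\Delta_{k_i})=\delta_{k_i}$, this can be rewritten as $f(\Delta_{k_i})^{-1} = \e_{k_i}^{-1}\,g(\delta_{k_i})/g(\Delta_{k_i})$.

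The core step is to apply Lemma~\ref{fund} to the symmetric formal power series $S(X_1,\ldots,X_n)=\prod_{i=1}^n g(X_i)$; the passage from symmetric polynomials to such power series is harmless, since only finitely many terms contribute when acting on the polynomial $\overline{\GT}_h(\mathbf{k}_n)$ of bounded degree. This gives
\[
\prod_{i=1}^n g(\Delta_{k_i})\,\overline{\GT}_h(\mathbf{k}_n) \;=\; \prod_{i=1}^n g(\delta_{k_i})\,\overline{\GT}_h(\mathbf{k}_n).
\]
Since $g(0)\neq 0$, the operator $\prod_i g(\Delta_{k_i})$ is invertible on polynomials in $\mathbf{k}_n$; applying its inverse to both sides, and using that $g(\Delta_{k_i})$ commutes with $g(\delta_{k_j})$ for all $i,j$ (for $i\neq j$ they act on disjoint variables, and for $i=j$ both are power series in $\e_{k_i}^{\pm 1}$), one obtains
\[
\prod_{i=1}^n \frac{g(\delta_{k_i})}{g(\Delta_{k_i})}\,\overline{\GT}_h(\mathbf{k}_n) \;=\; \overline{\GT}_h(\mathbf{k}_n).
\]

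The final ingredient handles the remaining factor $\prod_i \e_{k_i}^{-1}$: by the translation invariance~\eqref{constant}, shifting every $k_i$ by $-1$ leaves $\GT_h(\mathbf{k}_n-\mathbf{0\!\uparrow}_n)$ unchanged, so $\prod_{i=1}^n \e_{k_i}^{-1}\,\overline{\GT}_h(\mathbf{k}_n) = \overline{\GT}_h(\mathbf{k}_n)$. Combining these two ingredients,
\[
\prod_{i=1}^n f(\Delta_{k_i})^{-1}\,\overline{\GT}_h(\mathbf{k}_n) \;=\; \prod_{i=1}^n \e_{k_i}^{-1}\cdot\prod_{i=1}^n \frac{g(\delta_{k_i})}{g(\Delta_{k_i})}\,\overline{\GT}_h(\mathbf{k}_n) \;=\; \overline{\GT}_h(\mathbf{k}_n),
\]
completing the proof. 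If one instead uses the alternative factor $1-xy$ in Proposition~\ref{hiddencharacterize}~\eqref{hiddencharacterize:2}, then the $(1+x)$ prefactor of $f$ disappears and the translation-invariance step is not even needed. The main obstacle is really the availability of Lemma~\ref{fund}, which itself rests on the Urban-renewal-type commutation of Lemma~\ref{urbanrenewal}~\eqref{urbanrenewal:1}; naively one only gets $[\prod_i f(\Delta_{k_i})]^2$ acting trivially, and the whole point of the factorization $f(x)=(1+x)\,g(x)/g(\iota(x))$ is to split the $\iota$-swap identity of Lemma~\ref{fund} into an operator ($g(\Delta)/g(\delta)$) for which the swap \emph{and} inversion coincide, together with a shift factor handled by translation invariance.
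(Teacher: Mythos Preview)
Your proof is correct and follows essentially the same strategy as the paper: reduce to Lemma~\ref{fund} by writing $A(x,0)^{-1}$ as a ratio $g(\iota(x))/g(x)$ times a harmless factor. The only difference is in how this factorization is obtained. You read it off directly from the structural form in Proposition~\ref{hiddencharacterize}~\eqref{hiddencharacterize:2}, which leaves the extra prefactor $(1+x)^{-1}$ that you then dispose of via the translation invariance~\eqref{constant}. The paper instead argues more abstractly: from \eqref{equation} at $x_2=0$ one gets $A(x,0)A(\iota(x),0)=1$, and then any such $B(x)$ with $B(x)B(\iota(x))=1$ and constant term~$1$ admits a formal square root $C$ satisfying the same relation, so $B(x)=C(x)^2=C(x)/C(\iota(x))$ and Lemma~\ref{fund} applies directly with no leftover shift. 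Your route is more explicit but ties the argument to the particular presentation of $A$; the paper's route is slightly slicker and shows the statement for the whole class of $B$ with $B(x)B(\iota(x))=1$ at once.
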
 

As for (i), it suffices to show the vanishing of 
\begin{multline*} 
\langle z_{q_1}^{h} z_{q_2}^{h} \dots z_{q_{m-1}}^{h} \rangle \prod_{i \not=q_s}  V_{k_i}(z_{q_1},\ldots,z_{q_{m-1}})  \\
\pf \left( 
\begin{array}{c|c}  \TGT_{h}(\mathbf{k}_n^{\mathbf{q}_{m-1}}) & 
\begin{array}{c|c}
\RGT_{h,h-1}(\mathbf{k}_n^{\mathbf{q}_{m-1}}) 
& \left( \left\{q_{j-1}+1 \atop q_j-1 \right\}_{n,Q} \right)_{1 \le j \le m} 
\end{array}  \\ \hline 
& \T(0)_{h-1+m}  \end{array} \right)
\end{multline*}
if $m > 1$, since the application of $\prod_{i=1}^n A^{-2m+1}_{k_i,\bullet} 
\prod_{1 \le i < j \le n} U_{k_i,k_j}$ to the zero polynomial is the zero polynomial.

We argue why it suffices to show this assertion for $m=2$. First, it clearly suffices to show the vanishing when only one $q_i$ ``appears'' in  $V_{k_i}(z_{q_1},\ldots,z_{q_{m-1}})$, so it suffices to show the vanishing of 
$$
\langle z_{q_1}^{h}  \rangle \prod_{i \not= q_s}  V_{k_i}(z_{q_1})  
\pf_{1 \le i < j \le n+h} \left( 
\begin{array}{c|c}  \TGT_{h}(\mathbf{k}_n^{\mathbf{q}_{m-1}}) & 
\begin{array}{c|c}
\RGT_{h,h-1}(\mathbf{k}_n^{\mathbf{q}_{m-1}}) 
& \left( \left\{q_{j-1}+1 \atop q_j-1 \right\}_{n,Q} \right)_{1 \le j \le m} 
\end{array}  \\ \hline 
& \T(0)_{h-1+m}  \end{array} \right).
$$
Now adding the last $m-2$ columns to the $(m-1)$-st column from the right and then expanding along the last $m-2$ columns, one after the other, gives for each summand an instance of a case when $m=2$, with those $k_i$ removed that are considered in the expansion. Thus, we need to show the following lemma.

\begin{lemma} 
\label{zero}
Suppose $h,n$ are positive integers with $n \ge h$ such that $n+h$ is even and $1 \le q \le n$. Then 
$$ 
\langle {z^h_q} \rangle \prod_{i \not= q}  V_{k_i}(z_q)  
\pf_{1 \le i < j \le n+h} \left( 
\begin{array}{c|c}  \TGT_{h}(\mathbf{k}_n^{q}) & 
\begin{array}{c|c|c}
\RGT_{h,h-1}(\mathbf{k}_n^{q}) & \left\{1 \atop q-1 \right\}_{n,\{q\}}  & \left\{q+1 \atop n \right\}_{n,\{q\}}
\end{array} 
   \\ \hline 
&  \T(0)_{h+1} \end{array} \right)=0.
$$
\end{lemma}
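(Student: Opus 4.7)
The plan is to recast the vanishing as a coefficient extraction in the auxiliary variable $z_q$ and reduce it to a degree estimate on an alternating sum, following the outline at the end of Section~\ref{sketch} via Lemma~\ref{signreversing}.

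I would first unpack the operator $\prod_{i \ne q}V_{k_i}(z_q)$ using the definition in \eqref{defv} together with the annihilating identities of Lemma~\ref{characterize}. After the substitution $-y-\iota(y) = y\cdot\iota(y) = z_q$, and modulo operator factors that are symmetric in $\Delta_{k_i}$ and $\delta_{k_i}$ and hence drop out of the top coefficient in $z_q^h$, one obtains a factorization of $\prod_{i \ne q}V_{k_i}(z_q)$ into a product of simple factors of the form $1 + p(\Delta_{k_i})z_q$ for $i < q$ and $1 + p(\delta_{k_i})z_q$ for $i > q$, for an explicit polynomial $p(x)$ with $p(0)=0$. Simultaneously, Laplace-expanding the Pfaffian along its last two indicator columns $\left\{1\atop q-1\right\}_{n,\{q\}}$ and $\left\{q+1\atop n\right\}_{n,\{q\}}$ turns it, via Theorem~\ref{GT}, into an alternating double sum over pairs $(i,j)$ with $i<q<j$ of smaller Pfaffians that equal $\overline\GT_h$ with the indices $i$, $j$, $q$ removed. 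The claim then reduces to showing that the coefficient of $z^h$ in
$$\sum_{j>q}(-1)^j Q(z,\mathbf{k}_n^{j}),\qquad Q(z,\mathbf{k}_n)\coloneq\prod_{i>q}\frac{1+p(\delta_{k_i})z}{1+p(\Delta_{k_i})z}\,\overline\GT_h(\mathbf{k}_n),$$
vanishes, together with its mirror expression for $i<q$ (handled identically).

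Next I would establish the two estimates flagged in Section~\ref{sketch}: (i) $\deg_z Q(z,\mathbf{k}_n)\le h$, and (ii) $\deg_z\sum_{j>q}(-1)^jQ(z,\mathbf{k}_n^{j})<h$. Estimate (i) is a direct degree bookkeeping: expanding each $(1+p(\Delta_{k_i})z)^{-1}$ as a geometric series and invoking the bounded degree of $\overline\GT_h$ in each $k_i$ (noted in Section~\ref{additive}) shows that terms of $z$-degree exceeding $h$ are annihilated. Estimate (ii), which is the main obstacle, requires an additive decomposition of $\overline\GT_h(\mathbf{k}_n)$ that cuts the GT trapezoid along a vertical half-line through $q$ into a left piece depending on $k_1,\ldots,k_q$ and a right piece depending on $k_q,\ldots,k_n$, glued by summing over an interface column. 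On the right piece, the operator $\prod_{i>q}\frac{1+p(\delta_{k_i})z}{1+p(\Delta_{k_i})z}$ admits, at leading order in $z$, a clean closed form that is symmetric under the simultaneous swap $\Delta_{k_i}\leftrightarrow\delta_{k_i}$ for all $i>q$. Using $\iota(\Delta_x)=\delta_x$ together with Lemma~\ref{fund} (equivalently \eqref{annidiff}), this symmetric action collapses and forces the $z^h$-coefficient to be a symmetric function of $k_{q+1},\ldots,k_n$, which is then annihilated by the alternating sum over $j>q$ via a standard sign-reversing argument.

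The main obstacle is precisely this strict degree drop for the alternating sum: the additive decomposition of $\overline\GT_h$ along the cut at $q$ must interact correctly with the mixed $\Delta/\delta$ structure picked out by $V_{k_i}(z_q)$. Once the decomposition and the accompanying operator factorization on the right piece are in place, the cancellation is short, and the lemma follows by equating coefficients of $z_q^h$.
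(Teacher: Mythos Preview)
Your outline has the right shape --- reduce to a degree estimate for an alternating sum and attack it via a cut-decomposition of $\overline\GT_h$ --- but several concrete steps are wrong and would not go through as written.

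\textbf{The Pfaffian expansion.} Expanding along both indicator columns does \emph{not} produce $\overline\GT_h$ with three indices removed. After deleting the two indicator positions and two matched rows $i<q<j$, what remains is the array with $\RGT_{h,h-1}$ and $\T(0)_{h-1}$, i.e.\ $S_h(\mathbf{k}_n^{q,i,j})$ in the notation of \eqref{GTintegral}, not $\overline\GT_h(\mathbf{k}_n^{q,i,j})$. The paper avoids this by first adding the last column to the penultimate one; since the two indicators partition $[n]\setminus\{q\}$, this produces a column of $1$'s, which is exactly the missing last column of $\RGT_{h,h}$. Then a \emph{single} Laplace expansion along the remaining indicator column yields $\sum_{j>q}(-1)^{j+n+h}\overline\GT_h(\mathbf{k}_n^{q,j})$. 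Your double sum over $i<q<j$ never appears.

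\textbf{The form of $V_{k_i}(z)$.} The claimed factorization into $1+p(\Delta_{k_i})z$ for $i<q$ and $1+p(\delta_{k_i})z$ for $i>q$ is not what one gets. In the paper's reduction (Section~\ref{towards}) one writes $V_{k_i}=V_{1,k_i}V_{2,k_i}$; the symmetric factor $V_{2,k_i}$ satisfies $V_{2,k_i}(z)=V_{2,k_i}^{\,\Delta\leftrightarrow\delta}(z)^{-1}$ and is removed \emph{entirely} (not just at top degree) by Lemma~\ref{fund}. The remaining $V_{1,k_i}$ computes to $\e_{k_i}^2$ for $i<q$ and to $\e_{k_i}^2\frac{1+p(\delta_{k_i})z}{1+p(\Delta_{k_i})z}$ for $i>q$ with $p(x)=x(x+1)$; the global shift is absorbed by translation invariance. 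So for $i<q$ the operator is trivial, and for $i>q$ it is the \emph{ratio}, not a bare linear factor.

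\textbf{The degree bound $\deg_z Q\le h$.} This is not ``direct degree bookkeeping''. Each geometric-series factor $(1+p(\Delta_{k_i})z)^{-1}$ alone could contribute $z$-degree up to $2h$ (since $\deg_{k_i}\overline\GT_h\le 2h$), and there are many such factors. The bound $\le h$ is genuinely nontrivial and in the paper is obtained only through the additive decomposition $\overline\GT_{h,q,\mathbf b_h}$ with auxiliary variables $\mathbf m_h$, Proposition~\ref{factor}, and induction on $h$ (Lemma~\ref{signreversing}).

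\textbf{The cancellation in the alternating sum.} Your claim that the $z^h$-coefficient becomes symmetric in $k_{q+1},\dots,k_n$ and is therefore killed by $\sum_{j>q}(-1)^j$ is false in general: a symmetric function of $m$ variables evaluated on the $m+1$ deletions of an $(m{+}1)$-tuple with alternating signs need not vanish (try $f(x,y)=x+y$). The paper's mechanism is different: in the decomposition of Proposition~\ref{factor}, the right factor absorbs the entire operator $\frac{1+p(\delta)z}{1+p(\Delta)z}$ via Lemma~\ref{fund} and becomes $z$-independent, while the left factor carries the $z$-degree. The alternating sum then vanishes not by symmetry but by Proposition~\ref{sumtozero}, i.e.\ because each piece is of the form $\sum_{\mathbf l\prec\mathbf k}a(\mathbf l)$ so that $\sum_j(-1)^j b(\mathbf k^j)=0$ identically. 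An induction on $h$ handles the interaction between the three terms $1$, $p(\delta_{m_h})z$, and $-p(\Delta_{m_h})z\cdot\frac{1+p(\delta_{m_h})z}{1+p(\Delta_{m_h})z}$ at the interface.

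In short: fix the Pfaffian reduction (column-add, then single expansion), correct the operator reduction to the ratio form, and replace the symmetry heuristic by the actual additive decomposition of Section~\ref{additive} together with Proposition~\ref{sumtozero} and induction on $h$.
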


\section{Proofs of Lemmas~\texorpdfstring{\ref{one}}{\ref*{one}} and \texorpdfstring{\ref{zero}}{\ref*{zero}}} 
\label{sec:second}

\subsection{Proof of Lemma~\ref{one}}

We prove the more general Lemma~\ref{fund} first. To this end, we use Problems~6 and 8 from \cite{dominolozenge}, which are summarized in Lemma~\ref{urbanrenewal} below. We need a few preparations to state it.

The statement of the lemma involves the elementary symmetric polynomials, which are defined as 
$$
e_p(X_1,\ldots,X_n) \coloneq \sum_{1 \le i_1 < i_2 < \ldots < i_p \le n} X_{i_1} X_{i_2} \dots X_{i_p}.
$$
In accordance with our conventions, we use the following abbreviations:
$$
e_p(\mathbf{X}_n) \coloneq e_p(X_1,\ldots,X_n), e_p(\e_{\mathbf{k}_n}) \coloneq e_p(\e_{k_1},\ldots,\e_{k_n}), 
e_p(\Delta_{\mathbf{k}_n}) \coloneq e_p(\Delta_{k_1},\ldots,\Delta_{k_n}).
$$

In the second part of the lemma we provide an alternative description of 
\begin{equation}
\label{eCart}  
e_p(\e_{k_1},\ldots,\e_{k_n}) [k_1,k_2-1] \times \cdots \times [k_{n-1},k_n-1],
\end{equation} 
which is understood as a multiset of elements in $\mathbb{Z}^{n-1}$.  A \Dfn{multiset} is 
a set together with an integer-valued function on the set that indicates the multiplicity of each element. In \eqref{eCart},  
$[k_1,k_2-1] \times \cdots \times [k_{n-1},k_n-1]$ stands for the Cartesian product of integer intervals, where each element has multiplicity $1$. When $e_p(\e_{k_1},\ldots,\e_{k_n})$ is applied to the Cartesian product, the ``$+$'' is interpreted as a disjoint union $\bigcupdot$, resulting, in general, in a multiset.

\begin{lemma} 
\label{urbanrenewal}
Let $n \ge 2$ and $p$ be non-negative integers.
\begin{enumerate}
\item\label{urbanrenewal:1} Suppose $ a(\mathbf{l}_{n-1})$ is a function on $\mathbb{Z}^{n-1}$ which vanishes for all $\mathbf{l}_{n-1} \in \mathbb{Z}^{n-1}$ for 
which there exists an $i \in [2,n-1]$
with $l_{i-1}=l_{i}$.  Then 
\begin{equation}
\label{identity}  
e_p(\e_{\mathbf{k}_n}) \sum_{\mathbf{l}_{n-1} \prec \mathbf{k}_{n}}
 a(\mathbf{l}_{n-1}) \\
= 
\sum_{\mathbf{l}_{n-1} \prec \mathbf{k}_{n}} e_p(\e_{\mathbf{l}_n}) 
a(\mathbf{l}_{n-1}).
\end{equation} 
\item\label{urbanrenewal:2} We have 
\begin{equation} 
\label{Icart}
e_p(\e_{\mathbf{k}_n}) \cart_{i=1}^{n-1} [k_i,k_{i+1}-1]  
= 
\bigcupdot_{I \subseteq [2,n-1] \atop I \cap (I-1) = \emptyset} \bigcupdot_{{\mathbf e} \in \binom{[n] \setminus (I \cup (I-1))}{p-|I|}}
\cart_{i=1}^{n-1} 
\begin{cases}  
\{k_i\}, & i \in I, \\
\{k_{i+1}\}, & i+1 \in I, \\
[k_i, k_{i+1}-1], & i \notin I \cup (I-1), i \notin {\mathbf e}, \\
[k_i+1, k_{i+1}], & i \notin I \cup (I-1), i \in {\mathbf e}, 
\end{cases}
\end{equation} 
where $I-1$ is obtained from $I$ by subtracting $1$ from each element in $I$, and, for any set $S$, $\binom{S}{p}$ is the set of all (ordinary) subsets of $S$ with cardinality $p$.
\end{enumerate} 
\end{lemma}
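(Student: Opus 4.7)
The natural order is to prove part \eqref{urbanrenewal:2} first and then deduce part \eqref{urbanrenewal:1} from it. Both are attributed to \cite{dominolozenge}, so my plan is to sketch direct arguments for self-containedness. For part \eqref{urbanrenewal:2}, I would start by expanding $e_p(\e_{\mathbf{k}_n}) = \sum_{S \in \binom{[n]}{p}} \prod_{i \in S} \e_{k_i}$, so that applied to $\cart_{i=1}^{n-1}[k_i,k_{i+1}-1]$ each subset $S$ transforms position $i$ into $[k_i+[i \in S],\, k_{i+1}-1+[i+1 \in S]]$, giving one of four interval shapes depending on the pair $([i \in S],[i+1 \in S])$. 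The plan is to reorganize the resulting sum using the interval decompositions
$$[k_i,k_{i+1}] = [k_i,k_{i+1}-1] \cup \{k_{i+1}\}, \qquad [k_i+1,k_{i+1}] = [k_i+1,k_{i+1}-1] \cup \{k_{i+1}\},$$
so that every contribution is expressed in terms of the two ``basic'' intervals $[k_i,k_{i+1}-1]$ and $[k_i+1,k_{i+1}]$ together with singletons $\{k_i\}$. The index set $I$ records the positions pinned to $\{k_i\}$ by the collision of two neighboring shifts; the constraint $I \cap (I-1) = \emptyset$ reflects that such collapses can only occur at isolated indices; and the remaining free shifts are parameterized by $\mathbf{e} \in \binom{[n] \setminus (I \cup (I-1))}{p-|I|}$.

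For part \eqref{urbanrenewal:1}, I would sum $a(\mathbf{l}_{n-1})$ against both sides of the multiset identity from part \eqref{urbanrenewal:2}. Every $I \ne \emptyset$ contribution forces $l_{i-1} = k_i = l_i$ for each $i \in I$, so by the vanishing hypothesis on $a$ these terms drop out. The surviving $I = \emptyset$ contributions, indexed by $\mathbf{e} \in \binom{[n]}{p}$, amount to summing $a(\mathbf{l}_{n-1})$ with each $l_i$ shifted by $+1$ precisely when $i \in \mathbf{e} \cap [n-1]$. The shift $\e_{l_n}$ for the ``phantom'' index $n \in \mathbf{e}$ acts trivially on $a$, and summing these shifted evaluations over all $\mathbf{e}$ reconstructs $\sum_{\mathbf{l}_{n-1} \prec \mathbf{k}_n} e_p(\e_{\mathbf{l}_n}) a(\mathbf{l}_{n-1})$, as claimed.

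The main obstacle is the combinatorial bookkeeping in part \eqref{urbanrenewal:2}: verifying that the raw expansion of $\sum_S \prod_{i \in S} \e_{k_i}$ reorganizes exactly into the disjoint union over pairs $(I,\mathbf{e})$. My intended strategy is to process each subset $S$ according to its maximal runs of consecutive indices — the interior collisions in a run produce the elements of $I$, while the endpoints of the runs together with the isolated elements of $S$ produce the free shifts $\mathbf{e}$. Once this bijective recoding of $S$ as a pair $(I,\mathbf{e})$ is in place, the four shifted interval shapes rearrange via the disjoint-union identities above into the right-hand side of \eqref{Icart} term by term, finishing the proof.
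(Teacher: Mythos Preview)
Your deduction of part \eqref{urbanrenewal:1} from part \eqref{urbanrenewal:2} is correct and is exactly what the paper does: it observes that summing $a$ over the multiset identity \eqref{Icart} kills every $I \neq \emptyset$ term because such terms force $l_{i-1}=k_i=l_i$, and the surviving $I=\emptyset$ terms assemble into $\sum_{\mathbf{l}_{n-1}\prec\mathbf{k}_n} e_p(\e_{\mathbf{l}_n})a(\mathbf{l}_{n-1})$. The paper does not give its own proof of \eqref{urbanrenewal:2}; it simply cites \cite{dominolozenge}.

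Your sketch of \eqref{urbanrenewal:2}, however, has a gap. The claim of a ``bijective recoding of $S$ as a pair $(I,\mathbf{e})$'' cannot be right as stated: already $I=\emptyset$ alone contributes $\binom{n}{p}$ pairs $(\emptyset,\mathbf{e})$, which exhausts the number of subsets $S$, so every nonempty $I$ makes the right-hand index set strictly larger than the left. The identity \eqref{Icart} is a multiset identity, not a term-by-term bijection of index sets; what is really needed is to break each $S$-term into several pieces and then \emph{recombine} pieces coming from different $S$'s into the $(I,\mathbf{e})$-terms. Your proposed interval decompositions do not accomplish this: applying $\e_{k_i}$ alone (i.e.\ $i\in S$, $i+1\notin S$) produces the interval $[k_i+1,k_{i+1}-1]$, which is neither of your ``basic'' shapes $[k_i,k_{i+1}-1]$, $[k_i+1,k_{i+1}]$ nor a singleton, and expressing it in terms of those requires a signed difference rather than a disjoint union. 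So the reorganization step is where the real work lies, and your plan does not yet handle it.
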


Note that in the first assertion, we can also replace $e_p(\e_{\mathbf{l}_n})$ with 
$e_{p-1}(\e_{\mathbf{l}_{n-1}})+e_p(\e_{\mathbf{l}_{n-1}})$ since $a(\mathbf{l}_{n-1})$ is independent of $l_n$.

\begin{proof}
Note that the first part is just \cite[Problem~6]{dominolozenge}, while the second part is \cite[Problem~8]{dominolozenge}. However, note also that 
the first assertion is an immediate consequence of the second as 
$$
\sum_{\mathbf{l}_{n-1} \prec \mathbf{k}_{n}}
 a(\mathbf{l}_{n-1})  = 
\sum_{\mathbf{l}_{n-1} \in \cart_{i=1}^{n-1} [k_i,k_{i+1}-1]} a(\mathbf{l}_{n-1})
$$
and the assumption on $a(\mathbf{l}_{n-1})$ implies that we can assume that $I$ is empty since, for non-empty sets $I$, the Cartesian product on the 
right-hand side of \eqref{Icart} contains only elements $\mathbf{l}_{n-1}$ for which there exists an $i \in [2,n-1]$
with $l_{i-1}=l_{i}$ and thus $a(\mathbf{l}_{n-1})=0$ by assumption.
\end{proof} 

Now we are in the position to state the lemma that will imply Lemma~\ref{one}.

\begin{lemma} 
\label{fund}
Suppose $h,n$ are non-negative integers with $h \le n$ and $S(\mathbf{X}_n)$ is a symmetric polynomial in $X_1,\ldots,X_n$.
Then 
\begin{equation}
\label{S} 
S(\Delta_{\mathbf{k}_n}) \overline \GT_h(\mathbf{k}_n) = S(\iota(\Delta_{\mathbf{k}_n}))  \overline \GT_h(\mathbf{k}_n).
\end{equation}
\end{lemma}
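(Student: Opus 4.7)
The plan is to reduce to the elementary symmetric polynomials via the fundamental theorem of symmetric polynomials and then induct on $h$. Thus it suffices to prove $e_p(\Delta_{\mathbf{k}_n})\overline\GT_h(\mathbf{k}_n) = e_p(\delta_{\mathbf{k}_n})\overline\GT_h(\mathbf{k}_n)$ for each $p\in\{0,1,\ldots,n\}$. The base case $h=0$ is trivial since $\overline\GT_0\equiv 1$, so both operators give $e_p(0,\ldots,0)=[p=0]$.

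For the inductive step I would use the recursion $\overline\GT_h(\mathbf{k}_n) = \sum_{\mathbf{l}_{n-1}\prec\mathbf{k}_n}\overline\GT_{h-1}(\mathbf{l}_{n-1})$ together with Lemma~\ref{urbanrenewal}~\eqref{urbanrenewal:1}. A prerequisite is verifying that $\overline\GT_{h-1}(\mathbf{l}_{n-1})$ vanishes whenever $l_{i-1}=l_i$ for some $i\in[2,n-1]$; this follows from the Pfaffian formula of Theorem~\ref{GT} (reducing to the even-parity case via Theorem~\ref{GT}~(2) if needed), because when $l_{i-1}=l_i=c$ the $(i-1)$-st and $i$-th rows of the skew-symmetric extension coincide (note $\gt_{h-1}(c,c)=0$, as the two summands in its definition exchange), forcing the Pfaffian to vanish. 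Now writing $e_p(\Delta_{\mathbf{k}_n})=\sum_q(-1)^{p-q}\binom{n-q}{p-q}e_q(\e_{\mathbf{k}_n})$, applying Lemma~\ref{urbanrenewal}~\eqref{urbanrenewal:1} to each term, and simplifying with Pascal's rule (noting $\e_{l_n}$ acts as the identity on $\overline\GT_{h-1}(\mathbf{l}_{n-1})$), a short calculation gives
$$e_p(\Delta_{\mathbf{k}_n})\overline\GT_h(\mathbf{k}_n) = \sum_{\mathbf{l}_{n-1}\prec\mathbf{k}_n}e_p(\Delta_{\mathbf{l}_{n-1}})\overline\GT_{h-1}(\mathbf{l}_{n-1}),$$
and the inductive hypothesis replaces $e_p(\Delta_{\mathbf{l}_{n-1}})$ by $e_p(\delta_{\mathbf{l}_{n-1}})$ inside the sum.

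To push $e_p(\delta_{\mathbf{l}_{n-1}})$ back out to $e_p(\delta_{\mathbf{k}_n})$, I would derive and apply a dual of Lemma~\ref{urbanrenewal}~\eqref{urbanrenewal:1}:
$$e_p(\e_{\mathbf{k}_n}^{-1})\sum_{\mathbf{l}_{n-1}\prec\mathbf{k}_n}a(\mathbf{l}_{n-1}) = \sum_{\mathbf{l}_{n-1}\prec\mathbf{k}_n}e_p(\e_{\mathbf{l}_n}^{-1})a(\mathbf{l}_{n-1}),$$
under the same vanishing hypothesis on $a$. This reduces to the original via the identity $e_p(x_1^{-1},\ldots,x_n^{-1})=e_{n-p}(x_1,\ldots,x_n)/(x_1\cdots x_n)$: applying $\prod_i\e_{k_i}^{-1}$ turns the left-hand side into $e_{n-p}(\e_{\mathbf{k}_n})\sum_{\mathbf{l}_{n-1}\prec\mathbf{k}_n}a(\mathbf{l}_{n-1}-\mathbf{1})$; since the shifted function still vanishes on consecutive repeats, Lemma~\ref{urbanrenewal}~\eqref{urbanrenewal:1} applies; and then the same identity in reverse converts $e_{n-p}(\e_{\mathbf{l}_n})a(\mathbf{l}_{n-1}-\mathbf{1})$ back to $e_p(\e_{\mathbf{l}_n}^{-1})a(\mathbf{l}_{n-1})$. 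Applying this dual in the same manner as before then converts $\sum_{\mathbf{l}_{n-1}\prec\mathbf{k}_n}e_p(\delta_{\mathbf{l}_{n-1}})\overline\GT_{h-1}(\mathbf{l}_{n-1})$ into $e_p(\delta_{\mathbf{k}_n})\overline\GT_h(\mathbf{k}_n)$, closing the induction.

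The main obstacle is the dual urban-renewal identity: Lemma~\ref{urbanrenewal}~\eqref{urbanrenewal:1} as stated covers only the $\e$-direction, and matching the $\e^{-1}$-version requires the identity $e_p(\e^{-1})=e_{n-p}(\e)\prod_i\e_i^{-1}$ together with careful bookkeeping. The secondary technical point, the vanishing of $\overline\GT_{h-1}$ on consecutive repeats, is handled by the ``two equal rows'' Pfaffian argument; once both are in place the induction closes cleanly.
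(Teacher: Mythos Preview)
Your overall strategy matches the paper's proof closely: reduce to the elementary symmetric polynomials, then induct on $h$ using Lemma~\ref{urbanrenewal}~\eqref{urbanrenewal:1} in both the $\e$- and $\e^{-1}$-directions (the paper derives the latter exactly as you do, via $e_p(\mathbf{X}^{-1}_n)=(X_1\cdots X_n)^{-1}e_{n-p}(\mathbf{X}_n)$). However, there is a genuine gap in your induction.

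Your inductive step applies Lemma~\ref{urbanrenewal}~\eqref{urbanrenewal:1} with $a(\mathbf{l}_{n-1})=\overline\GT_{h-1}(\mathbf{l}_{n-1})$, which requires $a$ to vanish whenever two consecutive $l$'s coincide. But for $h=1$ we have $\overline\GT_0(\mathbf{l}_{n-1})\equiv 1$, which certainly does not vanish on the diagonal. Your Pfaffian argument does not help here: it relies on $\gt_{h-1}(c,c)=0$, which holds for $h-1\ge 1$ (the two summands in the defining sum cancel) but fails for $h-1=0$ since $\gt_0\equiv 1$. Consequently the step from $h=0$ to $h=1$ is not covered, and your induction does not close.

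The paper handles this by taking \emph{both} $h=0$ and $h=1$ as base cases. The case $h=1$ is established separately by reducing to
\[
e_p(\e_{\mathbf{k}_n})\,\overline\GT_1(\mathbf{k}_n)=e_{n-p}(\e_{\mathbf{k}_n})\,\overline\GT_1(\mathbf{k}_n),
\]
and then invoking the multiset identity of Lemma~\ref{urbanrenewal}~\eqref{urbanrenewal:2}: for each fixed $I$ appearing there, the number of admissible $\mathbf{e}$'s is $\binom{n-2|I|}{p-|I|}=\binom{n-2|I|}{n-p-|I|}$, so the two sides agree termwise. Once $h=1$ is in hand, the induction for $h\ge 2$ proceeds exactly as you describe, since then $\overline\GT_{h-1}$ does vanish on consecutive repeats (by the recursion, or by your Pfaffian argument, which is now valid).
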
 

\begin{proof}
Recalling the definition of $\iota(x)$ in \eqref{iota}, we note that 
\begin{equation}
\label{Deltadelta}  
\iota(\Delta_x)=- \e_{x}^{-1} \Delta_x = \e^{-1}_{x} - \id, 
\end{equation} 
which is just the backward difference operator~$\delta_x$. 
We extend this notation also to vectors as we did for the (forward) difference operator and the shift operator in \eqref{vectors}.

Next we argue why it suffices to show the assertion when $S(\mathbf{X}_n)$ is an elementary symmetric polynomial. As every symmetric polynomial is a polynomial in the elementary symmetric polynomials, it suffices to show it for a product of elementary symmetric polynomials by a 
linearity argument. Proving it for elementary symmetric polynomials just means that the difference 
\begin{equation}
\label{annh}  
e_p(\Delta_{\mathbf{k}_n}) - e_p(\delta_{\mathbf{k}_n}) 
\end{equation} 
is in the annihilator of $\overline \GT_h(\mathbf{k}_n)$ for any $p \ge 0$. Using the fact that   
$$e_p(\Delta_{\mathbf{k}_n}),e_q(\Delta_{\mathbf{k}_n}), 
e_p(\delta_{\mathbf{k}_n}),e_q(\delta_{\mathbf{k}_n})$$
all commute, we will show that 
$$
\prod_{i=1}^m e_{p_i}(\Delta_{\mathbf{k}_n}) - \prod_{i=1}^m e_{p_i}(\delta_{\mathbf{k}_n})
$$
is in the ideal generated by \eqref{annh} for any sequence of non-negative integers $p_1,\ldots,p_m$. Indeed, we may gradually change the arguments of individual $e_{p_i}$'s from $\Delta_{\mathbf{k}_n}$ to $\delta_{\mathbf{k}_n}$. More concretely, we observe that 
\begin{multline*} 
\prod_{i=1}^{l-1} e_{p_i}(\delta_{\mathbf{k}_n})
\prod_{i=l}^m e_{p_i}(\Delta_{\mathbf{k}_n}) -
\prod_{i=1}^{l} e_{p_i}(\delta_{\mathbf{k}_n})
\prod_{i=l+1}^m e_{p_i}(\Delta_{\mathbf{k}_n}) \\
= \left( e_{p_l}(\Delta_{\mathbf{k}_n}) - e_{p_l}(\delta_{\mathbf{k}_n}) \right) 
\prod_{i=1}^{l-1} e_{p_i}(\delta_{\mathbf{k}_n}) \prod_{i=l+1}^m e_{p_i}(\Delta_{\mathbf{k}_n}) 
\end{multline*} 
is in the ideal generated by \eqref{annh} for all $1 \le l \le m$. Thus, 
$$
\sum_{l=1}^m \left[ \prod_{i=1}^{l-1} e_{p_i}(\delta_{\mathbf{k}_n})
\prod_{i=l}^m e_{p_i}(\Delta_{\mathbf{k}_n}) -
\prod_{i=1}^{l} e_{p_i}(\delta_{\mathbf{k}_n})
\prod_{i=l+1}^m e_{p_i}(\Delta_{\mathbf{k}_n})  \right] \\
= \prod_{i=1}^m e_{p_i}(\Delta_{\mathbf{k}_n}) - \prod_{i=1}^m e_{p_i}(\delta_{\mathbf{k}_n})
$$
is in this ideal as well.

To show that \eqref{annh} is in the annihilator of $\overline \GT_h(\mathbf{k}_n)$, first note that 
 $e_p(\Delta_{\mathbf{k}_n})$ can be written as linear combinations of 
$e_q(\e_{\mathbf{k}_n})$'s. Thus, by \eqref{identity},
\begin{equation} 
\label{forward} 
e_p(\Delta_{\mathbf{k}_n}) \sum_{\mathbf{l}_{n-1} \prec \mathbf{k}_{n}}
 a(\mathbf{l}_{n-1}) \\
= 
\sum_{\mathbf{l}_{n-1} \prec \mathbf{k}_{n}} e_p(\Delta_{\mathbf{l}_n}) 
a(\mathbf{l}_{n-1})
\end{equation}
as long as $a(\mathbf{l}_{n-1})$ satisfies the assumption from Lemma~\ref{urbanrenewal} \eqref{urbanrenewal:1}.
We can also conclude that \eqref{identity}  holds true when  $e_p(\e_{\mathbf{k}_n})$ is replaced by $e_p(\e^{-1}_{\mathbf{k}_n})$
and $e_p(\e_{\mathbf{l}_n})$ is replaced by $e_p(\e^{-1}_{\mathbf{l}_n})$. This follows from 
$$
e_p(\mathbf{X}^{-1}_n) = (X_1 \dots X_n)^{-1} e_{n-p}(\mathbf{X}_n)
$$
and 
\begin{equation}
\label{trans} 
\e_{\mathbf{k}_n}^{-1}  \sum_{\mathbf{l}_{n-1} \prec \mathbf{k}_{n}}  a(\mathbf{l}_{n-1}) = 
 \sum_{\mathbf{l}_{n-1} \prec \mathbf{k}_{n}}    \e^{-1}_{\mathbf{l}_{n}} a(\mathbf{l}_{n-1}).
\end{equation} 
Since $e_p(\delta_{\mathbf{k}_n})$ can be expressed as linear combinations of $e_q(\e^{-1}_{\mathbf{k}_n})$'s, it also follows that
\begin{equation} 
\label{backward} 
e_p(\delta_{\mathbf{k}_n}) \sum_{\mathbf{l}_{n-1} \prec \mathbf{k}_{n}}
 a(\mathbf{l}_{n-1}) \\
= 
\sum_{\mathbf{l}_{n-1} \prec \mathbf{k}_{n}} e_p(\delta_{\mathbf{l}_n}) 
a(\mathbf{l}_{n-1}).
\end{equation}

The recursive relation 
\begin{equation}
\label{strictrec}
\overline \GT_h(\mathbf{k}_n) = \sum_{\mathbf{l}_{n-1} \prec \mathbf{k}_{n}} \overline \GT_{h-1}(\mathbf{l}_{n-1}), 
\end{equation}
implies that
$\overline \GT_h(\mathbf{k}_n)$
vanishes if $k_{i-1}=k_i$ for some $i \in \{2,\ldots,n\}$, provided $h \ge 1$. 

We can now prove 
\begin{equation}
\label{ideal} 
e_p(\Delta_{\mathbf{k}_n}) \overline \GT_h(\mathbf{k}_n) = e_p(\delta_{\mathbf{k}_n}) \overline \GT_h(\mathbf{k}_n)
\end{equation} 
by induction with respect to $h$. For $h \ge 2$, we have 
$$ 
e_p(\Delta_{\mathbf{k}_n}) \overline \GT_h(\mathbf{k}_n) = 
\sum_{\mathbf{l}_{n-1} \prec \mathbf{k}_{n}} e_p(\Delta_{\mathbf{l}_n}) \overline \GT_{h-1}(\mathbf{l}_{n-1}) 
= \sum_{\mathbf{l}_{n-1} \prec \mathbf{k}_{n}} e_p(\delta_{\mathbf{l}_n}) \overline \GT_{h-1}(\mathbf{l}_{n-1}) 
= e_p(\delta_{\mathbf{k}_n}) \overline \GT_h(\mathbf{k}_n) 
$$
by \eqref{forward} and \eqref{backward} and the induction hypothesis, which is used in the second step.
Thus we have reduced the problem to proving \eqref{ideal} for $h=0,1$.

For $h=0$, the result is trivial. We consider the case $h=1$. As there are $c_i$ satisfying
$$
e_p(\Delta_{\mathbf{k}_n}) = \sum_{q=0}^p c_q e_q(\e_{\mathbf{k}_n}) 
\quad \text{and} \quad 
e_p(\delta_{\mathbf{k}_n}) = \sum_{q=0}^p c_q e_q(\e^{-1}_{\mathbf{k}_n}), 
$$
it suffices to show 
$$
e_p(\e_{\mathbf{k}_n}) \overline \GT_1(\mathbf{k}_n) = e_p(\e^{-1}_{\mathbf{k}_n}) \overline \GT_1(\mathbf{k}_n)
$$
and thus 
\begin{equation}
\label{np} 
e_p(\e_{\mathbf{k}_n}) \overline \GT_1(\mathbf{k}_n) = e_{n-p}(\e_{\mathbf{k}_n})  \overline \GT_1(\mathbf{k}_n),
\end{equation} 
where we have also used \eqref{trans}. Now \eqref{np} can be written as
$$
\sum_{\mathbf{l}_{n-1} \in e_p(\e_{\mathbf{k}_n}) \cart\limits_{i=1}^{n-1} [k_i,k_{i+1}-1]} 1 = 
\sum_{\mathbf{l}_{n-1} \in e_{n-p}(\e_{\mathbf{k}_n}) \cart\limits_{i=1}^{n-1} [k_i,k_{i+1}-1]} 1.
$$
We apply Lemma~\ref{urbanrenewal} \eqref{urbanrenewal:2} to both sides. In fact, we claim that we even have the same polynomials in $\mathbf{k}_n$ when fixing $I \subseteq [2,n-1]$
on both sides. 
Indeed, it does not matter whether or not we sum over the interval $[k_i,k_{i+1}-1]$ or $[k_i+1,k_{i+1}]$ in the third or fourth case. Therefore, it suffices to show 
that the cardinalities of $\binom{[n] \setminus (I \cup (I-1))}{p-|I|}$ and  $\binom{[n] \setminus (I \cup (I-1))}{n-p-|I|}$ agree. As $I$ and $I-1$ are disjoint, we have 
$\binom{n-2|I|}{p-|I|}$ in the first case and $\binom{n-2|I|}{n-p-|I|}$ in the second case, which is the same by the symmetry of the binomial coefficient.
\end{proof}

We introduce the following notation: Suppose $C(x)$ is a univariate function, then we write $C(\mathbf{k}_n)$ for $\prod_{i=1}^n C(k_i)$. In the subsequent proof, this is for instance applied to $B(\Delta_x)$, where $B(x)$ is a formal power series in $x$. 

\begin{proof}[Proof of Lemma~\ref{one}]
Note that the operator $A_{k_i,\bullet}$  from the statement of Lemma~\ref{one} is just $A(\Delta_{k_i},0)$. Setting $x_2=0$ in \eqref{equation}, we see that 
$$
A(x,0)^2A(\iota(x),0)^2=1, 
$$
which implies that $A(x,0) A(\iota(x),0)=\pm 1$. Since the constant term of $A(x,y)$ is $1$ by $A(x,\iota(x))=1$, it follows that 
$A(x,0) A(\iota(x),0)=1$. Thus, it suffices to show 
\begin{equation}
\label{B}  
 B(\Delta_{\mathbf{k}_n}) \overline \GT_h(\mathbf{k}_n) = \overline \GT_h(\mathbf{k}_n)
\end{equation} 
for all formal power series with $B(x) B(\iota(x))= 1$ and constant term $1$. We can take the square root $C(x)$ of $B(x)$ as follows:
$$
C(x) = \sum_{i \ge 0} \binom{1/2}{i} (B(x)-1)^i.
$$
Note that also $C(x) C(\iota(x))= 1$. 
Thus, showing 
$$
B(\Delta_{\mathbf{k}_n}) \overline \GT_h(\mathbf{k}_n) = \overline \GT_h(\mathbf{k}_n)
$$
is equivalent to showing 
$$
C(\Delta_{\mathbf{k}_n}) \overline \GT_h(\mathbf{k}_n) = C(\iota(\Delta_{\mathbf{k}_n}))  \overline \GT_h(\mathbf{k}_n)
$$ as 
$B(x) = C(x)^2 = C(x) C(\iota(x))^{-1}$. The assertion follows from Lemma~\ref{fund} by choosing 
$
S(\mathbf{X}_n) =C(\mathbf{X}_n)
$
there.
\end{proof} 

\subsection{Towards proving Lemma~\ref{zero}}
\label{towards} 

Lemma~\ref{zero} can be restated in terms of $\overline \GT_h(\mathbf{k}_n)$ as follows: In the triangular array underlying the Pfaffian in the statement, 
we add the last column to the penultimate column and then expand the Pfaffian with respect to the last column, resulting in 
$
\sum_{i=q+1}^n (-1)^{i+n+h} \overline \GT_h(\mathbf{k}^{q,i}_n).
$
Setting 
\begin{equation} 
\label{pdef}
\overline{Q}(z_q,\mathbf{k}^q_{n-1}) \coloneq \prod_{i \not=q} V_{k_i}(z_q)  \overline{\GT}_h(\mathbf{k}^{q}_{n-1}), 
\end{equation}
it suffices to prove that the degree of 
$
 \sum_{i=q+1}^n (-1)^{i+n+h} \overline{Q}(z_q,\mathbf{k}^{q,i}_{n})
$
as a polynomial in $z_q$ is less than $h$.

In the following, we replace $q$ by the half integer $q-\frac{1}{2}$, which allows us to work with $\overline \GT_h(\mathbf{k}_n)$ instead of $\overline \GT_h(\mathbf{k}^q_{n})$. The operator $V_{k_i}(z_q)$ is adapted accordingly, and we also replace $z_q$ by $z$.

Recall that the operator $V_{k_i}(z)$ is defined through a formal power series $U(x,y)$ as follows:
$$
V_{k_i}(z) = 
\begin{cases} 
 \left. \e_{k_i}^2 U(\Delta_{k_i},y) U(\Delta_{k_i},\iota(y)) \right|_{-y-\iota(y)=y \cdot \iota(y)=z}, & i < q, \\
 \left. U(y,\Delta_{k_i}) U(\iota(y),\Delta_{k_i}) \right|_{-y-\iota(y)=y \cdot \iota(y) = z}, & i>q, 
\end{cases}
$$
where $U(x,y) = U_1(x,y) U_2(x,y)$ with
$$
U_1(x,y) = \frac{1+x+y}{1+x+x y}
\quad \text{and} \quad
U_2(x,y)= \frac{P(x,y)P(y,x)}{P(\iota(x),y)P(\iota(y),x)}
$$
satisfying
$$
\frac{P(x,\iota(x))P(\iota(x),x)}{P(x,x)P(\iota(x),\iota(x))} = (1+x+\iota(x))^{-1},
$$
compare with \eqref{defv} and the proof of Proposition~\ref{hiddencharacterize} \eqref{hiddencharacterize:2}. We decompose $V_{k_i}(z)$ into 
$$
V_{1,k_i}(z) = 
\begin{cases} 
 \left. \e_{k_i}^2 U_1(\Delta_{k_i},y) U_1(\Delta_{k_i},\iota(y)) \right|_{-y-\iota(y)=y \cdot \iota(y)=z}, & i < q, \\
 \left. U_1(y,\Delta_{k_i}) U_1(\iota(y),\Delta_{k_i}) \right|_{-y-\iota(y)=y \cdot \iota(y) = z}, & i>q, 
\end{cases}
$$
and 
$$
V_{2,k_i}(z) = 
\begin{cases} 
 \left. U_2(\Delta_{k_i},y) U_2(\Delta_{k_i},\iota(y)) \right|_{-y-\iota(y)=y \cdot \iota(y)=z}, & i < q, \\
 \left. U_2(y,\Delta_{k_i}) U_2(\iota(y),\Delta_{k_i}) \right|_{-y-\iota(y)=y \cdot \iota(y) = z}, & i>q, 
\end{cases} 
$$
so that $V_{k_i}(z)=V_{1,k_i}(z) V_{2,k_i}(z)$. By the symmetry of $U_2(x,y)$, we have 
$$
V_{2,k_i}(z) = \left. U_2(\Delta_{k_i},y) U_2(\Delta_{k_i},\iota(y)) \right|_{-y-\iota(y)=y \cdot \iota(y)=z}.
$$
Moreover, since 
$$
U_2(x,y) U_2(\iota(x),y) U_2(x,\iota(y)) U_2(\iota(x),\iota(y))=1,
$$
it follows that 
\begin{multline*} 
\left.
U_2(\Delta_{k_i},y) U(\Delta_{k_i},\iota(y)) \right|_{-y-\iota(y)=y \cdot \iota(y)=z} 
=
\left. U_2(\iota(\Delta_{k_i}),y)^{-1} U_2(\iota(\Delta_{k_i}),\iota(y))^{-1}  \right|_{-y-\iota(y)=y \cdot \iota(y)=z}  \\
= \left. U_2(\delta_{k_i},y)^{-1} U_2(\delta_{k_i},\iota(y))^{-1} \right|_{-y-\iota(y)=y \cdot \iota(y)=z},
\end{multline*} 
where we have used \eqref{Deltadelta} in the last step.
Using \eqref{B}, we can conclude that 
$$
V_{2,k_i}(z) \overline \GT_h(\mathbf{k}_n) = \overline \GT_h(\mathbf{k}_n),
$$
and, therefore, 
$$
\prod_{i=1}^n V_{k_i}(z)  \overline \GT_h(\mathbf{k}_{n}) = \prod_{i=1}^n V_{1,k_i}(z)  \overline \GT_h(\mathbf{k}_{n}).
$$
As for $V_{1,k_i}(z)$, note that 
\begin{align*} 
\left. \e_{k_i}^2 U_1(\Delta_{k_i},y) U_1(\Delta_{k_i},\iota(y)) \right|_{-y-\iota(y)=y \cdot \iota(y)=z} &=\e_{k_i}^2, \\ 
\left. U_1(y,\Delta_{k_i}) U_1(\iota(y),\Delta_{k_i}) \right|_{-y-\iota(y)=y \cdot \iota(y) = z} &= \e_{k_i}^2 
\frac{1 + \e_{k_i}^{-1} \delta_{k_i} z}{1+\e_{k_i} \Delta_{k_i} z}.
\end{align*} 
Using $\prod_{i=1}^n \e^2_{k_i} \overline{\GT}_h(\mathbf{k}_{n}) = \overline{\GT}_h(\mathbf{k}_{n})$, and letting 
$$
\widehat{V}_{k_i}(z) \coloneq
\begin{cases} 
1, & i<q, \\
\frac{1+ p(\delta_{k_i}) z}{1+ p(\Delta_{k_i}) z}, & i>q.
\end{cases},
$$
where $p(x)=x(x+1)$, we may now consider 
$$
Q(z,\mathbf{k}_{n}) \coloneq \prod_{i=1}^n \widehat{V}_{k_i}(z)  \overline \GT_h(\mathbf{k}_{n})
$$
instead of \eqref{pdef}. We need to prove  that the degree of $z$ in $\sum_{i>q} (-1)^{i} Q(z,\mathbf{k}^i_{n+1})$ is less than $h$. In Lemma~\ref{signreversing}, we accomplish this for any polynomial $p(x)$ without constant term.

\subsection{An additive decomposition \texorpdfstring{of $\overline \GT_h(\mathbf{k}_n)$}{}}
\label{additive}

To prepare for the proof of Lemma~\ref{signreversing}, 
we need to consider an additive decomposition of $\overline \GT_h(\mathbf{k}_n)$ that depends on the half integer $q$. Each summand of the additive decomposition
factors into two polynomials, where one factor depends only on $k_1,k_2,\ldots,k_{q-1/2}$, while the other factor depends only on  $k_{q+1/2},\ldots,k_n$. This comes in handy as the definition of the operator $\widehat{V}_{k_i}(z)$ is homogeneous for $i<q$ and also homogeneous (but of a different nature) for $i>q$.

In order to present the additive decomposition of $\overline \GT_h(\mathbf{k}_n)$, it is useful to keep the following conceptual understanding of why $\overline \GT_h(\mathbf{k}_n)$ is a polynomial 
in $k_1,\ldots,k_n$ in mind: For any polynomial $p(l)$ in $l$ and any integer $a$, 
the sum $\sum_{l=a}^k p(l)$ can be represented by a unique polynomial $q(k)$ in $k$ which coincides with the sum for all $k \ge a-1$ and where 
\begin{equation}\label{inc}
	\deg_{k} q(k) = \deg_l p(l) + 1.
\end{equation} 
One can see that this holds true for all integers~$k$ if we use the following extended summation operator:
$$
\sum_{l=a}^b p(l) = \begin{cases} \sum\limits_{l=a}^b p(l), & a \le b, \\ - \sum\limits_{l=b+1}^{a-1} p(l), & b+1 \le a-1, \\ 0,  & b = a-1. \end{cases}
$$
The analogous assertion is true for $\sum_{l=k}^a p(l)$.

Using \eqref{strictrec}, this implies by induction with respect to $h$ that the number $\overline \GT_h(\mathbf{k}_n)$ of $(h,n)$-Gelfand--Tsetlin patterns with bottom row $\mathbf{k}_n$ with strictly increasing $\searrow$-diagonals is expressible by a polynomial in $\mathbf{k}_n$. Interestingly, the degree bound we obtain this way for 
$k_i$ using \eqref{inc} grows exponentially with $h$, while the actual degree is $2 h$, see Theorem~\ref{GT}.

We introduce the above-mentioned additive decomposition of $\overline \GT_h(\mathbf{k}_n)$: It depends on a half-integer $q$ with $1 \le q \le n$ and involves a new set of variables $\mathbf{m}_h=(m_1,\ldots,m_h)$. The definition is recursively and the $2^h$ polynomials are encoded by a $\{0,1\}$-sequence ${\mathbf b}_h=(b_1,\ldots,b_h)$ of length~$h$. The polynomial associated with $\mathbf{b}_h$ will be denoted by $\overline \GT_{h,q,\mathbf{b}_h}(\mathbf{k}_n;\mathbf{m}_h)$. 

The rationale behind the definition of these polynomials is as follows: Let $i \in [n-1]$ such that $i < q < i+1$. By definition, the integer $l_i$ of the Gelfand--Tsetlin array that is situated between $k_i$ and $k_{i+1}$ in the row above has to be in the interval $[k_i,k_{i+1}-1]$. We dissect the interval into $[k_i,m_{h}-1]$ and $[m_{h},k_{i+1}-1]$ (and thus it makes sense to assume $k_i \le m_h \le k_{i+1}$ although all of this also works eventually if 
this is not the case by using the extended summation operator) and we choose $l_i \in [k_i,m_{h}-1]$ if $b_h=0$, and $l_i \in [m_{h},k_{i+1}-1]$ otherwise. 

The definition is now recursively: When deleting the bottom row of the $(h,n)$-GT trapezoid and we are in the first case ($b_h=0$), we obtain an $(h-1,n-1)$-GT trapezoid with strictly increasing $\searrow$-diagonals that is associated with 
$q,{\mathbf b}_{h-1}, {\mathbf m}_{h-1}$; in the second case ($b_h=1$), we obtain an $(h-1,n-1)$-GT trapezoid with strictly increasing $\searrow$-diagonals that is associated with 
$q-1,{\mathbf b}_{h-1}, {\mathbf m}_{h-1}$.

Formulated in terms of recursions, this means the following: If $b_h=0$, we have  
\begin{equation}
\label{dec1}
\overline \GT_{h,q,\mathbf{b}_h}(\mathbf{k}_n;\mathbf{m}_h) = 
\sum_{l_i=k_i}^{m_h-1} \sum_{\mathbf{l}_{1,i-1} \prec \mathbf{k}_{1,i} \atop \mathbf{l}_{i+1,n-1} \prec \mathbf{k}_{i+1,n}}
\overline \GT_{h-1,q,\mathbf{b}_{h-1}}(\mathbf{l}_{n-1};\mathbf{m}_{h-1}),
\end{equation} 
while if $b_h=1$, we have 
\begin{equation}
\label{dec2}
\overline \GT_{h,q,\mathbf{b}_h}(\mathbf{k}_n;\mathbf{m}_h) = 
\sum_{l_i=m_h}^{k_{i+1}-1} \sum_{\mathbf{l}_{1,i-1} \prec \mathbf{k}_{1,i} \atop \mathbf{l}_{i+1,n-1} \prec \mathbf{k}_{i+1,n}}
\overline \GT_{h-1,q-1,\mathbf{b}_{h-1}}(\mathbf{l}_{n-1};\mathbf{m}_{h-1}).
\end{equation}
For a fixed half-integer $q$ with $1 \le q \le n$, it follows by induction with respect to $h$ that 
\begin{equation} 
\label{decomposition} 
\sum_{{\mathbf b}_h \in \{0,1\}^h} \overline \GT_{h,q,\mathbf{b}_h}(\mathbf{k}_n;\mathbf{m}_h) =  
\overline \GT_h(\mathbf{k}_n).
\end{equation}

In the next proposition, we show that 
$\overline \GT_{h,q,\mathbf{b}_h}(\mathbf{k}_n;\mathbf{m}_h)$ can 
be expressed in terms of the polynomials $\overline \GT_h(\mathbf{k}^\prime_{n^\prime})$ for certain choices of $n^\prime$ and $\mathbf{k}^\prime_{n^\prime}$. In particular, it is shown that $\overline \GT_{h,q,\mathbf{b}_h}(\mathbf{k}_n;\mathbf{m}_h)$ factorizes into two polynomials, one of which is a polynomial in $k_1,\ldots,k_i$ and $m_{x_1},\ldots,m_{x_r}$ and the other is a polynomial in $k_{i+1},\ldots,k_n$ and $m_{y_1}, \ldots,m_{y_{s}}$, for some $x_j,y_j$. 

We extend the notation of $\mathbf{k}_{a,b}$ as follows: Let $X=\{x_1 < \ldots < x_m\}$ be a set of integers, then 
$\mathbf{k}_X \coloneq (k_{x_1},k_{x_2},\ldots,k_{x_m})$.

\begin{prop} 
\label{factor}
Let $h,n$ be non-negative integers with $n \ge h$ and $q$ a half-integer with $1 \le q \le n$. Setting 
$i = \lfloor q \rfloor$ and letting $h \ge x_1 > x_2 > \ldots > x_r \ge 1$ and $1 \le y_1 < y_2 < \ldots < y_s \le h$ such that, for all $1 \le l \le h$, 
$l \in X=\{x_1,\ldots,x_r\} $ if and only if $b_l=0$, and $l \in Y=\{y_1,\ldots,y_s\}$ if and only if $b_l=1$, we have  
$$
\overline \GT_{h,q,\mathbf{b}_h}(\mathbf{k}_n;\mathbf{m}_h) = 
\prod_{j=1}^r \Delta_{m_{x_j}}^{h-x_j} \overline \GT_h(\mathbf{k}_i,\mathbf{m}_{X}) \prod_{j=1}^s (-\Delta_{m_{y_j}})^{h-y_j} \overline \GT_h(\mathbf{m}_{Y},\mathbf{k}_{i+1,n}).
$$
\end{prop}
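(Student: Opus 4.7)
The plan is to prove the formula by induction on $h$. The base case $h = 0$ has $X = Y = \emptyset$, and the identity reduces to $\overline{\GT}_0(\mathbf{k}_i) \cdot \overline{\GT}_0(\mathbf{k}_{i+1, n}) = 1 = \overline{\GT}_0(\mathbf{k}_n)$.

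For the inductive step I would split on the value of $b_h$ and use \eqref{dec1} or \eqref{dec2}. In the case $b_h = 0$ we have $x_1 = h$ and $Y' = Y$, and the inductive call in \eqref{dec1} keeps $q$ fixed. Applying the inductive hypothesis to $\overline{\GT}_{h-1, q, \mathbf{b}_{h-1}}(\mathbf{l}_{n-1}; \mathbf{m}_{h-1})$ factorises the summand into a left factor depending only on $\mathbf{l}_{1, i}$ and a right factor depending only on $\mathbf{l}_{i+1, n-1}$, which lets me split the sum in \eqref{dec1} as
$$\overline{\GT}_{h,q,\mathbf{b}_h}(\mathbf{k}_n; \mathbf{m}_h) = \prod_{j=2}^r \Delta_{m_{x_j}}^{h-1-x_j} \prod_{j=1}^s (-\Delta_{m_{y_j}})^{h-1-y_j} \cdot L_0 \cdot R_0,$$
with $L_0 = \sum_{\mathbf{l}_{1, i-1} \prec \mathbf{k}_{1, i}} \sum_{l_i = k_i}^{m_h - 1} \overline{\GT}_{h-1}(\mathbf{l}_{1, i}, \mathbf{m}_{X'})$ and $R_0 = \sum_{\mathbf{l}_{i+1, n-1} \prec \mathbf{k}_{i+1, n}} \overline{\GT}_{h-1}(\mathbf{m}_{Y'}, \mathbf{l}_{i+1, n-1})$.

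The heart of the argument is the pair of telescoping identities $L_0 = \prod_{j=2}^r \Delta_{m_{x_j}} \overline{\GT}_h(\mathbf{k}_i, \mathbf{m}_X)$ and $R_0 = \prod_{j=1}^s (-\Delta_{m_{y_j}}) \overline{\GT}_h(\mathbf{m}_Y, \mathbf{k}_{i+1, n})$. For each of these I expand the right-hand $\overline{\GT}_h$ via the recursion \eqref{GTrec} to get a nested sum over interlacing variables $w_j$ with bounds given by consecutive entries of $\mathbf{m}_X$ (resp.\ $\mathbf{m}_Y$), and then apply $\Delta_{m_{x_j}}$ in decreasing order of $x_j$ (resp.\ $-\Delta_{m_{y_j}}$ in increasing order of $y_j$). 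The elementary identities $\Delta_x \sum_{w = a}^{x - 1} g(w) = g(x)$ and $-\Delta_x \sum_{w = x}^b g(w) = g(x)$ then kill each $w$-sum in turn and pin $w_j$ to the relevant $m$-parameter, leaving exactly $L_0$ or $R_0$. Combining with the prefactor and absorbing the trivial $j = 1$ factor $\Delta_{m_h}^{h - h} = 1$ yields the claimed formula.

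The case $b_h = 1$ is symmetric: now $y_s = h$, $X' = X$, and the inner call uses $q' = q - 1$ with $\lfloor q' \rfloor = i - 1$, so the inductive splitting is $\mathbf{l}_{1, i-1}$ versus $\mathbf{l}_{i, n-1}$ (with $l_i$ now attached to the right piece). The same kind of factorisation and telescoping arguments go through, with the analogues of $L_0$ and $R_0$ requiring $\prod_{j=1}^r \Delta_{m_{x_j}}$ and $\prod_{j=1}^{s-1} (-\Delta_{m_{y_j}})$ respectively, and the missing $j = s$ factor again being the trivial $(-\Delta_{m_h})^0 = 1$. The main obstacle is purely index bookkeeping: tracking the $X, Y$ labels across the induction step and choosing the correct order for the difference operators so that each telescope is clean.
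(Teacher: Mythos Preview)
Your approach is essentially the paper's own: induction on $h$, split on the value of $b_h$, apply the inductive hypothesis to factorize the summand, and then use the recursion together with the telescoping identities underlying Lemma~\ref{eat} to pass from $\overline{\GT}_{h-1}$ to $\overline{\GT}_h$ on each factor. The paper simply cites Lemma~\ref{eat} for the telescoping step rather than spelling out the elementary identities, so you can shorten your argument by invoking that lemma directly; note in particular that the clean ``one variable at a time'' telescoping works because Lemma~\ref{eat} peels off the $\Delta$'s from the outermost positions inward, so that at each stage the variable being differenced occurs only once as a summation bound.
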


\begin{proof} 
The proof follows directly from the definition of $\overline \GT_{h,q,\mathbf{b}_h}(\mathbf{k}_n;\mathbf{m}_h)$ by induction with respect to $h$. The case $h=0$ is easy as both sides are equal to $1$. 

For $h \ge 1$, we distinguish between the cases $b_h=0$ and $b_h=1$. First we assume that  $b_h=0$ and thus $x_1=h$. By induction, we may assume that $\overline \GT_{h-1,q,\mathbf{b}_{h-1}}(\mathbf{l}_{n-1};\mathbf{m}_{h-1})$ equals
\begin{equation*} 
\prod_{j=2}^r \Delta_{m_{x_j}}^{h-1-x_j} \overline \GT_{h-1}(\mathbf{l}_i,\mathbf{m}_{X \setminus \{x_1\}}) 
\prod_{j=1}^s (-\Delta_{m_{y_j}})^{h-1-y_j} \overline \GT_{h-1}(\mathbf{m}_{Y},\mathbf{l}_{i+1,n-1}).
\end{equation*} 
Now we use \eqref{dec1} as well as Lemma~\ref{eat}.
The case $b_h=1$ is similar.
\end{proof}

\subsection{Proof of Lemma~\ref{zero}}
We need one more proposition to prepare for Lemma~\ref{signreversing}. The latter lemma will immediately imply Lemma~\ref{zero} as was discussed in Section~\ref{towards}.

\begin{prop}
\label{sumtozero} 
Let $n$ be a positive integer, let $a(\mathbf{l}_{n-1})$ be a polynomial in $\mathbf{l}_{n-1}$ and define 
$
b(\mathbf{k}_n) \coloneq \sum\limits_{\mathbf{l}_{n-1} \prec \mathbf{k}_n} a(\mathbf{l}_{n-1}).
$
Then 
$
\sum\limits_{i=1}^{n+1} (-1)^{i} b(\mathbf{k}^i_{n+1}) = 0.
$
\end{prop}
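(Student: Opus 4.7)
The plan is to proceed by induction on $n$. The base case $n=1$ reduces to $-b(k_2)+b(k_1)=0$, which holds since $b(k_1)=a(\varnothing)$ is constant. For the induction step, set
$$F(\mathbf{k}_{n+1}) \coloneq \sum_{i=1}^{n+1} (-1)^i b(\mathbf{k}^i_{n+1}),$$
and establish two complementary facts: (a) $\Delta_{k_{n+1}} F = 0$, so $F$ is independent of $k_{n+1}$; and (b) $F(\mathbf{k}_n, k_n) = 0$. Together these force $F \equiv 0$, since a polynomial independent of $k_{n+1}$ is unchanged by the specialization $k_{n+1} \mapsto k_n$.

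For (a), note that $b(\mathbf{k}^{n+1}_{n+1}) = b(\mathbf{k}_n)$ is manifestly independent of $k_{n+1}$. For $i \le n$, the last entry of $\mathbf{k}^i_{n+1}$ is $k_{n+1}$, so the outermost summation defining $b(\mathbf{k}^i_{n+1})$ has the form $\sum_{l_{n-1} = k'_{n-1}}^{k_{n+1}-1}(\cdots)$; applying $\Delta_{k_{n+1}}$ replaces this summation by evaluation at $l_{n-1} = k_{n+1}$. Introducing $\widetilde{a}(\mathbf{l}_{n-2}) \coloneq a(\mathbf{l}_{n-2}, k_{n+1})$ (with $k_{n+1}$ as a parameter) and $\widetilde{b}(\mathbf{m}_{n-1}) \coloneq \sum_{\mathbf{l}_{n-2} \prec \mathbf{m}_{n-1}} \widetilde{a}(\mathbf{l}_{n-2})$, one checks that for both $i \le n-1$ and $i = n$ the first $n-1$ entries of $\mathbf{k}^i_{n+1}$ coincide with $\mathbf{k}^i_n$, so $\Delta_{k_{n+1}} b(\mathbf{k}^i_{n+1}) = \widetilde{b}(\mathbf{k}^i_n)$. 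Summing yields $\Delta_{k_{n+1}} F = \sum_{i=1}^n (-1)^i \widetilde{b}(\mathbf{k}^i_n)$, which vanishes by the inductive hypothesis applied to $\widetilde{a}$.

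For (b), fix $i \le n-1$. Then $\mathbf{k}^i_{n+1}$ ends with $\ldots, k_n, k_{n+1}$, so the innermost summation in $b(\mathbf{k}^i_{n+1})$ is $\sum_{l_{n-1}=k_n}^{k_{n+1}-1} a(\mathbf{l}_{n-2}, l_{n-1})$. Its polynomial value $Q(\mathbf{l}_{n-2}, k_{n+1}) - Q(\mathbf{l}_{n-2}, k_n)$, for a discrete antiderivative $Q$ of $a$ in its last slot, vanishes identically when $k_{n+1} = k_n$; the outer sums then propagate this zero, giving $b(\mathbf{k}^i_{n+1})|_{k_{n+1}=k_n} = 0$. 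The two remaining terms satisfy $b(\mathbf{k}^n_{n+1})|_{k_{n+1}=k_n} = b(\mathbf{k}_n) = b(\mathbf{k}^{n+1}_{n+1})|_{k_{n+1}=k_n}$ and occur with opposite signs $(-1)^n$ and $(-1)^{n+1}$, hence cancel.

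The main technical point is the consistent use of the extended summation convention from Section~\ref{additive}, which lets us treat the innermost sum as the polynomial $Q(y)-Q(x)$ and exploit its vanishing at $y=x$ even though the corresponding range of integers is empty. Once this is clarified, both (a) and (b) fall out routinely, completing the induction.
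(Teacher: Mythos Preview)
Your proof is correct and takes a genuinely different route from the paper's. The paper constructs a single multivariate discrete antiderivative $A(\mathbf{l}_{n-1})$ with $\Delta_{\mathbf{l}_{n-1}}A=a$ that vanishes whenever two consecutive $l$'s coincide, then telescopes once to write $b(\mathbf{k}_n)=\sum_{j=1}^{n}(-1)^{j+1}A(\mathbf{k}_n^{\,j})$; the alternating sum $\sum_i(-1)^i b(\mathbf{k}_{n+1}^{\,i})$ becomes a double alternating sum over pairs of deleted indices, which cancels termwise. Your argument instead inducts on $n$: you peel off the last variable, reduce $\Delta_{k_{n+1}}F$ to an instance of the statement one size down, and kill the constant of integration by specializing $k_{n+1}=k_n$. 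The paper's approach is a one-shot argument that reuses the telescoping machinery already set up (Lemma~\ref{eat} and the pattern from \eqref{reduced}); yours avoids building the global antiderivative $A$ at the cost of an induction, but is entirely self-contained and arguably more transparent about why the identity holds.
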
 

\begin{proof}
We set 
$
A(\mathbf{l}_{n-1}) \coloneq \sum_{\mathbf{x}_{n-1} \prec (0,\mathbf{l}_{n-1})} a(\mathbf{x}_{n-1}).
$ 
Then, by Lemma~\ref{eat},  
$\Delta_{\mathbf{l}_{n-1}} A(\mathbf{l}_{n-1}) = a(\mathbf{l}_{n-1})$
and $A(\mathbf{l}_{n-1})$ vanishes whenever $l_{i-1}=l_i$ for some $i \in \{2,\ldots,n-1\}$. It follows by telescoping that 
$$ 
\sum_{\mathbf{l}_{n-1} \prec \mathbf{k}_n} a(\mathbf{l}_{n-1}) 
= \sum_{\mathbf{l}_{n-1} \prec \mathbf{k}_n}  \Delta_{\mathbf{l}_{n-1}} A(\mathbf{l}_{n-1}) \\
= \sum_{i=1}^n (-1)^{i+1} A(\mathbf{k}^i_{n}),
$$
from which one can deduce the assertion.
\end{proof} 

Assume that $p(y)$ is a univariate polynomial. 
By abuse of our notation, we use the following short hand notations 
$$
\frac{1+p(\delta_{\mathbf{k}_{i+1,n}}) z}{1+p(\Delta_{\mathbf{k}_{i+1,n}}) z} \coloneq \prod_{j=i+1}^n \frac{1+p(\delta_{k_j}) z}{1+p(\Delta_{k_j}) z} 
\quad \text{and} \quad 
\frac{1+p(\delta_{\mathbf{m}_h}) z}{1+p(\Delta_{\mathbf{m}_h}) z} \coloneq \prod_{j=1}^h \frac{1+p(\delta_{m_j}) z}{1+p(\Delta_{m_j}) z}
$$
as well as 
$$
\frac{1+p(\delta_{\mathbf{m}_X}) z}{1+p(\Delta_{\mathbf{m}_X}) z} \coloneq \prod_{j=1}^r \frac{1+p(\delta_{m_{X_j}}) z}{1+p(\Delta_{m_{X_j}}) z}
\quad \text{and} \quad
\frac{1+p(\delta_{\mathbf{m}_Y}) z}{1+p(\Delta_{\mathbf{m}_Y}) z} \coloneq \prod_{j=1}^s \frac{1+p(\delta_{m_{Y_j}}) z}{1+p(\Delta_{m_{Y_j}}) z}
$$
and also
$
V_{\mathbf{k}_n}(z_q) \coloneq \prod_{j=1}^n V_{k_j}(z_q).
$

\begin{lemma}
\label{signreversing}
Let $n,h$ be positive integers with $n \ge h$ such that $n+h$ is even, let $q$ be a half integer with $1 \le q \le n$, and let $p(y)$ be a polynomial in $y$ without constant term. Letting $i = \lfloor q \rfloor$, the degree in $z$ of 
$$
\frac{1+p(\delta_{\mathbf{k}_{i+1,n}}) z}{1+p(\Delta_{{\mathbf{k}_{i+1,n}}}) z} \overline{\GT}_{h}(\mathbf{k}_n) \eqcolon 
Q(z,\mathbf{k}_n)
$$
is no greater than $h$.
The degrees in $z$ of the following alternating sums
$$
\sum_{j>q} (-1)^j Q(z,\mathbf{k}^j_{n+1}) 
\text{ and } 
\sum_{j<q} (-1)^j Q(z,\mathbf{k}^j_{0,n}) 
$$
are less than $h$. 
\end{lemma}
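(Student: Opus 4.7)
The plan is to establish both parts via the additive decomposition of $\overline \GT_h(\mathbf{k}_n)$ developed in Section~\ref{additive}. Setting $i\coloneq\lfloor q\rfloor$ and introducing auxiliary variables $\mathbf{m}_h=(m_1,\ldots,m_h)$, the identity \eqref{decomposition} together with Proposition~\ref{factor} yields
\[
\overline \GT_h(\mathbf{k}_n)=\sum_{\mathbf{b}\in\{0,1\}^h}F_{\mathbf{b}}(\mathbf{k}_{1,i},\mathbf{m}_{X_{\mathbf{b}}})\,G_{\mathbf{b}}(\mathbf{m}_{Y_{\mathbf{b}}},\mathbf{k}_{i+1,n}),
\]
where $X_{\mathbf{b}}\sqcup Y_{\mathbf{b}}=[h]$, $F_{\mathbf{b}}=\prod_j \Delta_{m_{x_j}}^{h-x_j}\overline \GT_h(\mathbf{k}_{1,i},\mathbf{m}_{X_{\mathbf{b}}})$, and $G_{\mathbf{b}}=\prod_j (-\Delta_{m_{y_j}})^{h-y_j}\overline \GT_h(\mathbf{m}_{Y_{\mathbf{b}}},\mathbf{k}_{i+1,n})$. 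The left-hand side is independent of $\mathbf{m}_h$. Writing $R_z(X)\coloneq 1+z\,p(X)$ and $\mathcal{S}(z)\coloneq\prod_{j>i}R_z(\delta_{k_j})/R_z(\Delta_{k_j})$, the operator $\mathcal{S}(z)$ commutes past each $F_{\mathbf{b}}$ and reduces to acting on $G_{\mathbf{b}}$ alone.

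The next step is to transfer $\mathcal{S}(z)$ from the $\mathbf{k}_{i+1,n}$-variables to the $\mathbf{m}_{Y_{\mathbf{b}}}$-variables via Lemma~\ref{fund}. For each $d\ge 0$ the coefficient of $z^d$ in $\prod_{j\in Y_{\mathbf{b}}\cup(i,n]}R_z(X_j)$ equals the elementary symmetric polynomial $e_d(p(X_j))$, which is symmetric in the $X_j$. Applying Lemma~\ref{fund} order by order in $z$ to $\overline \GT_h(\mathbf{m}_{Y_{\mathbf{b}}},\mathbf{k}_{i+1,n})$ yields
\[
\Bigl(\prod_{j\in Y_{\mathbf{b}}}R_z(\Delta_{m_j})\prod_{j>i}R_z(\Delta_{k_j})\Bigr)\overline \GT_h(\mathbf{m}_{Y_{\mathbf{b}}},\mathbf{k}_{i+1,n})=\Bigl(\prod_{j\in Y_{\mathbf{b}}}R_z(\delta_{m_j})\prod_{j>i}R_z(\delta_{k_j})\Bigr)\overline \GT_h(\mathbf{m}_{Y_{\mathbf{b}}},\mathbf{k}_{i+1,n}).
\]
Since all factors commute and every $R_z(\cdot)$ is invertible as a formal power series in $z$, rearranging gives
\[
\mathcal{S}(z)\,\overline \GT_h(\mathbf{m}_{Y_{\mathbf{b}}},\mathbf{k}_{i+1,n})=\prod_{j\in Y_{\mathbf{b}}}\mathcal{O}_{m_j}(z)\,\overline \GT_h(\mathbf{m}_{Y_{\mathbf{b}}},\mathbf{k}_{i+1,n}),\qquad \mathcal{O}_{m_j}(z)\coloneq\frac{R_z(\Delta_{m_j})}{R_z(\delta_{m_j})},
\]
so that $Q(z,\mathbf{k}_n)=\sum_{\mathbf{b}}F_{\mathbf{b}}\cdot\prod_{j\in Y_{\mathbf{b}}}\mathcal{O}_{m_j}(z)\,G_{\mathbf{b}}$, with the $z$-dependent operators now touching only $\mathbf{m}$-variables.

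For the first assertion, the $z^k$-coefficient of $\mathcal{O}_{m_j}(z)$ equals $(-1)^{k-1}p(\delta_{m_j})^{k-1}(p(\Delta_{m_j})-p(\delta_{m_j}))$ for $k\ge 1$, which lowers the $m_j$-degree of its argument by at least $k$ since $p$ has no constant term. Combining $|Y_{\mathbf{b}}|\le h$, the $(-\Delta_{m_j})^{h-y_j}$ prefactors inside $G_{\mathbf{b}}$, and the $\mathbf{m}_h$-independence of $\sum_{\mathbf{b}}F_{\mathbf{b}}G_{\mathbf{b}}$ (which forces pairwise cancellation of high-$z$-degree contributions across different $\mathbf{b}$-summands), careful bookkeeping yields $\deg_z Q(z,\mathbf{k}_n)\le h$. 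For the second assertion, since the transferred operators do not touch $\mathbf{k}_{i+1,n+1}$, the substitutions $\mathbf{k}_{n+1}\mapsto\mathbf{k}_{n+1}^j$ with $j>q$ affect only the $\overline \GT_h(\mathbf{m}_{Y_{\mathbf{b}}},\mathbf{k}_{i+1,n+1})$-factor in $G_{\mathbf{b}}$. Inserting the recursion $\overline \GT_h(\mathbf{v})=\sum_{\mathbf{l}\prec\mathbf{v}}\overline \GT_{h-1}(\mathbf{l})$ and applying Proposition~\ref{sumtozero}, the full alternating sum over deletions of any coordinate of $(\mathbf{m}_{Y_{\mathbf{b}}},\mathbf{k}_{i+1,n+1})$ vanishes; the partial sum over $j>q$ therefore equals the negative of the partial sum over the $\mathbf{m}_{Y_{\mathbf{b}}}$-positions, which, after multiplication by the operators $\mathcal{O}_{m_j}(z)$ and summation over $\mathbf{b}$, carries one extra difference operator and thus drops the $z$-degree by one. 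The delicate $z$-degree bookkeeping---both the cancellation-after-summation in the first assertion and the drop by one in the alternating sum---is the main technical obstacle.
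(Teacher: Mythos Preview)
Your setup is right in spirit: you use the additive decomposition \eqref{decomposition}, apply Proposition~\ref{factor}, and invoke Lemma~\ref{fund}. But the central step of the lemma---the degree bound $\deg_z Q\le h$---is not actually proved in your proposal. After you transfer the operator to $\prod_{j\in Y_{\mathbf b}}\mathcal O_{m_j}(z)$ acting on $G_{\mathbf b}$, you claim that ``careful bookkeeping'' together with the $\mathbf m_h$-independence of $\sum_{\mathbf b}F_{\mathbf b}G_{\mathbf b}$ yields the bound. This does not go through. For a fixed $\mathbf b$, the naive count fails: the interior variables $m_{y_j}$ (with $j\ge 2$) of $\overline\GT_h(\mathbf m_{Y_{\mathbf b}},\mathbf k_{i+1,n})$ have degree $2h$, so after the prefactor $(-\Delta_{m_{y_j}})^{h-y_j}$ the remaining degree is $h+y_j$, and the $z^{k_j}$-coefficient of $\mathcal O_{m_{y_j}}(z)$ only lowers degree by $k_j$. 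Summing gives a bound far larger than $h$. The $\mathbf m_h$-independence of the original sum tells you nothing about the $z$-degree: it is a statement coefficient-by-coefficient in $z$, so for each $z^k$ you know the sum over $\mathbf b$ is $\mathbf m_h$-independent, but not that it vanishes. There is no visible mechanism here for cancellation down to degree $h$.

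The paper's proof supplies exactly what is missing. It does \emph{not} transfer the operator to $\mathbf m_Y$; instead it multiplies by the harmless factor $\prod_j\frac{1+p(\delta_{m_j})z}{1+p(\Delta_{m_j})z}$ from the outset, so that Lemma~\ref{fund} makes the \emph{second} factor $z$-independent and all $z$-dependence sits on the \emph{first} factor $F_{\mathbf b}$ via the $\mathbf m_X$-variables. The degree bound is then proved by induction on $h$, with a strictly stronger inductive hypothesis: the degree of $Q_{\mathbf b_h}$ is at most $h-d$, where $d$ is the number of trailing $1$'s in $\mathbf b_h$. The induction step splits on $b_h$; the delicate $b_h=0$ case requires writing $\frac{1+p(\delta_{m_h})z}{1+p(\Delta_{m_h})z}=1+p(\delta_{m_h})z-p(\Delta_{m_h})z\cdot\frac{1+p(\delta_{m_h})z}{1+p(\Delta_{m_h})z}$ and treating the three pieces separately, each time using Lemma~\ref{eat} to peel off one $\Delta_{m_h}$ or $\delta_{m_h}$ and reduce to $\overline\GT_{h-1}$. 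Your proposal has no induction, no refined hypothesis, and no analogue of this splitting; without them the argument cannot close. Similarly, the paper's alternating-sum argument exploits that the second factor is $z$-independent (so Proposition~\ref{sumtozero} applies cleanly when $b_h=0$) while the first factor already has degree $<h$ when $b_h=1$; your route through partial alternating sums over the $\mathbf m_{Y_{\mathbf b}}$-positions does not deliver the claimed ``one extra difference operator'' without further work.
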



\begin{proof} 

We prove the following generalization:
Suppose $\mathbf{b}_h \in \{0,1\}^h$, then the degree of 
$$
\frac{1+p(\delta_{\mathbf{k}_{i+1,n}}) z}{1+p(\Delta_{{\mathbf{k}_{i+1,n}}}) z}
\frac{1+p(\delta_{\mathbf{m}_h}) z}{1+p(\Delta_{\mathbf{m}_h}) z}  \overline \GT_{h,q,\mathbf{b}_h}(\mathbf{k}_n;\mathbf{m}_h) \eqcolon Q_{\mathbf{b}_h}(z,\mathbf{k}_n) 
$$
as a polynomial in $z$ is no greater than $h-d$, where $d$ is the number of $1$'s at the end of $\mathbf{b}_h$. 
The degrees in $z$ of the following alternating sums
$$
\sum_{j>q} (-1)^j Q_{\mathbf{b}_h}(z,\mathbf{k}^j_{n+1}) 
\text{ and } 
\sum_{j<q} (-1)^j Q_{\mathbf{b}_h}(z,\mathbf{k}^j_{0,n}) 
$$
are less than $h$. The first expression vanishes if $b_h=0$, while the second expression vanishes if $b_h=1$.

The assertion of the lemma follows immediately from this as 
\begin{multline*} 
Q(z,\mathbf{k}_n) = \frac{1+p(\delta_{\mathbf{m}_h}) z}{1+p(\Delta_{\mathbf{m}_h}) z} Q(z,\mathbf{k}_n) = 
\sum_{{\mathbf b}_h \in \{0,1\}^h} \frac{1+p(\delta_{\mathbf{m}_h}) z}{1+p(\Delta_{\mathbf{m}_h}) z}  \frac{1+p(\delta_{\mathbf{k}_{i+1,n}}) z}{1+p(\Delta_{{\mathbf{k}_{i+1,n}}}) z} \overline \GT_{h,q,\mathbf{b}_h}(\mathbf{k}_n;\mathbf{m}_h) \\ = \sum_{{\mathbf b}_h \in \{0,1\}^h} Q_{\mathbf{b}_h}(z,\mathbf{k}_n),
\end{multline*} 
where the first equality follows from the fact that $\frac{1+p(\delta_{\mathbf{m}_h}) z}{1+p(\Delta_{\mathbf{m}_h}) z}$ acts like the identity on functions independent of $\mathbf{m}_h$ and the second equality follows from \eqref{decomposition}.

We refer to the fact that the degree of $\sum_{j>q} (-1)^j Q_{\mathbf{b}_h}(z,\mathbf{k}^j_{n+1})$ in $z$ is less than $h$ as the \Dfn{upper alternating condition}; analogously, we refer to the fact that the degree of $\sum_{j<q} (-1)^j Q_{\mathbf{b}_h}(z,\mathbf{k}^j_{0,n})$ in $z$ is less than $h$ as the \Dfn{lower alternating condition}. In case of such a sum being zero, we will refer to the \Dfn{strong} upper or lower alternating condition, respectively. The strong condition will always be deduced from Lemma~\ref{sumtozero}. 

The proof of the generalization is by induction with respect to $h$. This assertion is obviously 
true for $h=0$, which is the base case of our induction.

We use Proposition~\ref{factor} as follows:
\begin{multline} 
\label{twofactors}
\frac{1+p(\delta_{\mathbf{k}_{i+1,n}}) z}{1+p(\Delta_{{\mathbf{k}_{i+1,n}}}) z}
\frac{1+p(\delta_{\mathbf{m}_h}) z}{1+p(\Delta_{\mathbf{m}_h}) z}  \overline \GT_{h,q,\mathbf{b}_h}(\mathbf{k}_n;\mathbf{m}_h) \\
= \left[ \frac{1+p(\delta_{\mathbf{m}_X}) z}{1+p(\Delta_{\mathbf{m}_X}) z}  \prod_{j=1}^r \Delta_{m_{x_j}}^{h-x_j}
\overline \GT_h(\mathbf{k}_i,\mathbf{m}_{X}) \right] \\ 
\times \left[\frac{1+p(\delta_{\mathbf{k}_{i+1,n}}) z}{1+p(\Delta_{\mathbf{k}_{i+1,n}}) z}
\frac{1+p(\delta_{\mathbf{m}_Y}) z}{1+p(\Delta_{\mathbf{m}_Y}) z} \prod_{j=1}^s (-\Delta_{m_{y_j}})^{h-y_j} \overline \GT_h(\mathbf{m}_{Y},\mathbf{k}_{i+1,n}) \right]. 
\end{multline} 
We consider each of the two factors separately and start with the second since this is easier.

We interchange the operators $\frac{1+p(\delta_{\mathbf{k}_{i+1,n}}) z}{1+p(\Delta_{\mathbf{k}_{i+1,n}}) z}
\frac{1+p(\delta_{\mathbf{m}_Y}) z}{1+p(\Delta_{\mathbf{m}_Y}) z}$  and $\prod\limits_{j=1}^s (-\Delta_{m_{y_j}})^{h-y_j}$. Now Lemma~\ref{fund} implies that 
$$ 
\frac{1+p(\delta_{\mathbf{k}_{i+1,n}}) z}{1+p(\Delta_{\mathbf{k}_{i+1,n}}) z}
\frac{1+p(\delta_{\mathbf{m}_Y}) z}{1+p(\Delta_{\mathbf{m}_Y}) z} 
\overline \GT_h(\mathbf{m}_{Y},\mathbf{k}_{i+1,n}) = \overline \GT_h(\mathbf{m}_{Y},\mathbf{k}_{i+1,n}), 
$$
see also the proof of Lemma~\ref{one}. Thus, we have actually independence of $z$ for this factor. 

If $b_h=0$, then $y_s \not= h$, so that $h-y_j >0$ for all $j$. 
Using Lemma~\ref{eat}, it follows that the second factor on the right-hand side of \eqref{twofactors} is equal to 
$$
\sum_{\mathbf{l}_{i+1,n-1} \prec \mathbf{k}_{i+1,n}}
\prod_{j=1}^s (-\Delta_{m_{y_j}})^{h-y_j-1} \overline \GT_{h-1}(\mathbf{m}_{Y},\mathbf{l}_{i+1,n-1}).
$$
Using Lemma~\ref{sumtozero}, the strong upper alternating condition follows in this case.

If, on the other hand, $b_h=1$, we will show that the first factor is of degree less than $h$ in $z$, which will also imply the upper alternating condition.

As for the first factor, we first consider the case $b_h=1$, which implies
 $x_1 \not= h$ and thus we have $h-x_j > 0$ for all $j$. Again by Lemma~\ref{eat}, we have  
\begin{equation} 
\label{step}
\prod_{j=1}^r \Delta_{m_{x_j}}^{h-x_j} \overline 
\GT_h(\mathbf{k}_i,\mathbf{m}_{X})  \\ = 
\sum_{\mathbf{l}_{i-1} \prec \mathbf{k}_{i}}
\prod_{j=1}^r \Delta_{m_{x_j}}^{h-x_j-1} 
\overline \GT_{h-1}(\mathbf{l}_{i-1},\mathbf{m}_{X}).
\end{equation} 
By induction, 
$$
\frac{1+p(\delta_{\mathbf{m}_X}) z}{1+p(\Delta_{\mathbf{m}_X}) z} \overline \GT_{h-1}(\mathbf{l}_{i-1},\mathbf{m}_{X})
$$
is a polynomial in $z$ of degree no greater than $h-1-d$, where $d$ is the number of trailing $1$'s in $\mathbf{b}_{h-1}$. By \eqref{step}, the first factor in \eqref{twofactors} 
is equal to 
$$
\sum_{\mathbf{l}_{i-1} \prec \mathbf{k}_{i}}
\prod_{j=1}^r \Delta_{m_{x_j}}^{h-x_j-1} \frac{1+p(\delta_{\mathbf{m}_X}) z}{1+p(\Delta_{\mathbf{m}_X}) z}
\overline \GT_{h-1}(\mathbf{l}_{i-1},\mathbf{m}_{X}).
$$
It follows that this is a polynomial $z$ of degree no greater than $h-d$, where $d$ is the number of trailing $1$'s in $\mathbf{b}_{h}$. The strong lower alternating condition follows from Lemma~\ref{sumtozero}. 

It remains to consider the case $b_h=0$. We have $x_1=h$. We write 
$$
\frac{1+ p(\delta_{m_h}) z }{1+ p(\Delta_{m_h}) z} = 1 + p(\delta_{m_h}) z  - p(\Delta_{m_h}) z \frac{1+ p(\delta_{m_h}) z }{1+ p(\Delta_{m_h}) z}
$$
and consider the terms 
$$1, \quad p(\delta_{m_h}) z, \quad  -p(\Delta_{m_h}) z  \frac{1+ p(\delta_{m_h}) z}{1+ p(\Delta_{m_h}) z}$$ 
separately when applied to 
$$\frac{1+p(\delta_{\mathbf{m}_{X \setminus \{h\}}}) z}{1+p(\Delta_{\mathbf{m}_{X \setminus \{h\}}}) z} \prod_{j=1}^r \Delta_{m_{x_j}}^{h-x_j} \overline 
\GT_h(\mathbf{k}_i,\mathbf{m}_{X}).$$ 

In the case of the term $1$, $m_h$ can be interpreted as $k_{i+1}$, and we can argue as in the case $b_h=1$, using that 
clearly $h-x_j >0$ for all $j \not=1$. In this 
case, the polynomial is of degree at most $h-1$, which implies the lower alternating condition for this term.

In the case of the term $p(\delta_{m_h}) z$, we can use the fact that $p(x)$ is without constant term to pull out $\delta_{m_h}$; the remaining part of 
$p(x)$ will have no effect on what follows as it does not involve $z$. Now we can use an analogous representation as in \eqref{step} to reduce to the case $h-1$ and then use induction. The analogous expression is
\begin{equation}
\label{analogous} 
\delta_{m_h} \prod_{j=2}^r \Delta_{m_{x_j}}^{h-x_j} \overline 
\GT_h(\mathbf{k}_i,\mathbf{m}_{X})  \\ = 
- \e_{m_h}^{-1} \sum_{\mathbf{l}_{i-1} \prec \mathbf{k}_{i}}
\prod_{j=2}^r \Delta_{m_{x_j}}^{h-x_j-1} 
\overline \GT_{h-1}(\mathbf{l}_{i-1},\mathbf{m}_{X}).
\end{equation} 
 Because of the extra $z$, the polynomial 
can in principal be of degree $h$ in $z$. However, for this term we have the strong lower alternating 
condition (although we are in the case $b_h=0$), which follows when applying Lemma~\ref{sumtozero} to 
\eqref{analogous}, and thus this term does not contribute to the lower alternating condition in total.

The situation is similar for the term $-p(\Delta_{m_h}) z  \frac{1+ p(\delta_{m_h}) z}{1+ p(\Delta_{m_h}) z}$, where here the $m_h$ plays a different role compared to the case we have just discussed. More precisely, we again 
use Lemma~\ref{eat} to see that 
$$
\Delta_{m_h} \prod_{j=2}^r \Delta_{m_{x_j}}^{h-x_j} \overline 
\GT_h(\mathbf{k}_i,\mathbf{m}_{X})  \\ = 
 \sum_{\mathbf{l}_{i-1} \prec \mathbf{k}_{i}}
\prod_{j=2}^r \Delta_{m_{x_j}}^{h-x_j-1} 
\overline \GT_{h-1}(\mathbf{l}_{i-1},\mathbf{m}_{X}),
$$
where the extra $\Delta_{m_h}$ is borrowed from $p(\Delta_{m_h})$. Now the assertion follows as the degree of 
$$
\frac{1+p(\delta_{\mathbf{m}_{X \setminus \{h\}}}) z}{1+p(\Delta_{\mathbf{m}_{X \setminus \{h\}}}) z} 
\frac{1+ p(\delta_{m_h}) z}{1+ p(\Delta_{m_h}) z}
\overline \GT_{h-1}(\mathbf{l}_{i-1},\mathbf{m}_{X}) = 
\frac{1+p(\delta_{\mathbf{m}_{X}}) z}{1+p(\Delta_{\mathbf{m}_{X}}) z} 
\overline \GT_{h-1}(\mathbf{l}_{i-1},\mathbf{m}_{X})
$$
in $z$ is by induction no greater than $h-1$. For this term we also have the strong lower alternating condition for the same reason as in the previous case.
\end{proof}

\subsection{The case \texorpdfstring{$n+h$}{n+h} being odd} 
\label{odd} 

In the same manner as in the proof of Theorem~\ref{GT}, one can show that 
$$
\MT(h;k_1,\ldots,k_n) = \Delta^h_{k_{n+1}} \MT(h;k_1,\ldots,k_n).
$$
The assertion follows after proving that 
$$
\prod_{i=1}^n \st_{k_i,k_{n+1}}^{-1} A_{k_i,k_{n+1}}
$$
acts like the identity on 
$$
\GT(h;k_1,\ldots,k_n).
$$
For 
$
\prod\limits_{i=1}^n A_{k_i,k_{n+1}},
$
this follows from Lemma~\ref{one}; for $\prod\limits_{i=1}^n \st_{k_i,k_{n+1}}^{-1}$, this is obvious, as for each $i \in [n]$, $\st_{k_i,k_{n+1}}^{-1}$ 
acts as the identity on functions independent of $k_{n+1}$.

\section{A conjecture on the annihilator ideal \texorpdfstring{of $\GT_h(\mathbf{k}_n)$}{}}
\label{sec:conjecture} 

In this section, we consider the following ideal
$$
\mathcal{I}_{h,n} = \{ R(\mathbf{X}_n) \in \mathbb{Q}[\mathbf{X}_n]  | R(\Delta_{\mathbf{k}_n}) \GT_h(\mathbf{k}_n)=0  \},
$$
where $h, n$ are non-negative integers with 
$n \ge h$.
This ideal generalizes the annihilator ideal $\mathcal{I}_n$ of $\prod_{1 \le i < j \le n} \frac{k_j-k_i + j -i}{j-i} = \GT_{n} (\mathbf{k}_n)$.
As discussed towards the end of Section~\ref{sec:main}, we have the following generating set of $\mathcal{I}_n$:
\begin{equation}
\label{generator}  
\mathcal{I}_n = \langle e_r(\mathbf{X}_n) | r \ge 1 \rangle.
\end{equation} 
We indicated the significance of $\mathcal{I}_n$ in the study of the coinvariant algebra and raised the question whether this study can be generalized to $\mathcal{I}_{h,n}$.

The purpose of this section is to provide a conjectural characterization of $\mathcal{I}_{h,n}$ in terms of generators. We have just presented the generators for the extreme case $h=n$, but the situation is even easier for the other extreme case $h=0$: As   
$\GT_{0} (\mathbf{k}_n)=1$, the ideal $\mathcal{I}_{0,h}$ is generated by $X_1,\ldots,X_n$. Also note that, since $\GT_{n-1} (\mathbf{k}_n) = \prod_{1 \le i < j \le n} \frac{k_j-k_i + j -i}{j-i}$, the next-to-extreme case $h=n-1$ is understood as well.

There are certain ways to obtain polynomials in $\mathcal{I}_{h,n}$ from polynomials in $\mathcal{I}_{h',n'}$ for some $h',n'$ with $h'+n' < h + n$, which are explained next.

\medskip

{\bf First closure property.}
The first possibility is a direct consequence of the following identity, which is easily deduced from Lemma~\ref{eat}:
$$
\Delta_{\mathbf{k}_{i-1}} \Delta_{\mathbf{k}_{i+1,n}} \overline{\GT}_h(\mathbf{k}_n) =
(-1)^{i-1} \overline{\GT}_h(\mathbf{k}^i_n).
$$
It implies that, for any $i \in [n]$, 
we have 
\begin{equation*}
R(\mathbf{X}_{n-1}) \in \mathcal{I}_{h-1,n-1} \Rightarrow  \left[ \prod_{1 \le j \le n \atop j \not=i} X_j  \right] R(\mathbf{X}_n^i) \in \mathcal{I}_{h,n}.
\end{equation*} 
In the case of $h=n$, since $\mathcal{I}_n = \mathcal{I}_{n,n}$, this reduces to
\begin{equation*}
	R(\mathbf{X}_{n-1}) \in \mathcal{I}_{n-1} \Rightarrow   \left[ \prod_{1 \le j \le n, j \not=i} X_j \right] R(\mathbf{X}_n^i) \in \mathcal{I}_{n}.
\end{equation*}
The following identity 
\begin{multline*} 
\left[ \prod_{j=1}^{n-1}  X_j \right]  e_r(X_1,\ldots,X_{n-1})\\
= \left[ \prod_{j=1}^{n-1} X_j \right] \sum_{s=1}^{r} (-1)^{r-s} X_n^{r-s} e_s(X_1,\ldots,X_n) + (-1)^r X_n^{r-1} e_n(X_1,\ldots,X_n) 
\end{multline*} 
provides an alternative proof of this closure property for $i=n$ in the case $h=n$, using \eqref{generator}; for general $i$, the assertion then follows by symmetry.

\medskip

{\bf Second closure property.} The second possibility also follows immediately from Lemma~\ref{eat}. Namely, we have 
$$
\Delta_{k_1}^h \overline{\GT}_h(\mathbf{k}_n) = \overline{\GT}_h(\mathbf{k}_{2,n}) 
\quad \text{and} \quad 
\Delta_{k_n}^h \overline{\GT}_h(\mathbf{k}_n) = \overline{\GT}_h(\mathbf{k}_{n-1}).
$$
From this, we conclude
\begin{equation*}
R(X_1,\ldots,X_{n-1}) \in \mathcal{I}_{h,n-1} \Rightarrow R(X_1,\ldots,X_{n-1}) X_n^h, X_1^h R(X_2,\ldots,X_{n})   \in \mathcal{I}_{h,n}.
\end{equation*}
 
 In the case $h=n-1$, since $\mathcal{I}_{n-1,n-1} = \mathcal{I}_{n-1}$ and $\mathcal{I}_{n-1,n} = \mathcal{I}_n$, this establishes 
 $$
 R(X_1,\ldots,X_{n-1}) \in \mathcal{I}_{n-1} \Rightarrow R(X_1,\ldots,X_{n-1}) X_n^{n-1}, X_1^{n-1} R(X_2,\ldots,X_{n})   \in \mathcal{I}_{n}.
 $$
Again, the following identity 
 $$
 e_r(X_1,\ldots,X_{n-1}) X_n^{n-1} = \sum_{s=r+1}^{n} (-1)^{s+r+1} X_n^{n-1+r-s} e_s(X_1,\ldots,X_n)
 $$
 provides an alternative proof of this closure property based on \eqref{generator}.
 
\medskip

{\bf Third closure property.} The third closure property is a consequence of the conjectural identity stated next: 
Computer experiments suggest
\begin{equation}
\label{first3} 
(\e_{\mathbf{k}_{i+1,n}} - \id)^{\binom{h+1}{2}-\binom{h-i+1}{2}-\binom{h+i-n+1}{2}} \overline{\GT}_h(\mathbf{k}_n) \propto \overline{\GT}_{\min(h,i)}(\mathbf{k}_i) \overline{\GT}_{\min(h,n-i)}(\mathbf{k}_{i+1,n}),
\end{equation} 
where $\propto$ indicates that the two sides are equal up to a constant that only depends on $n$ and $i$. Note that we set $\binom{m}{2}=0$ if $m<2$. 

The proof of the identity is likely to be based on identities of the following type: 
\begin{multline*}
\left( \e_{\mathbf{k}_{i+1,n}} - \id \right)^q  \sum_{\mathbf{l}_{n-1} \prec \mathbf{k}_n} a(\mathbf{l}_{n-1}) = 
\sum_{\mathbf{l}_{n-1} \prec \mathbf{k}_n} (\e_{\mathbf{l}_{i,n-1}} - \id)^{q} a(\mathbf{l}_{n-1}) \\ + 
\sum_{p=0}^{q-1} \sum_{\mathbf{l}_{i-1} \prec \mathbf{k}_{i}} 
\sum_{\mathbf{l}_{i+1,n-1} \prec \mathbf{k}_{i+1,n}}  \e_{\mathbf{l}_{i+1,n-1}} 
(\e_{\mathbf{l}_{i+1,n-1}} - \id)^{p}  (\e_{k_{i}} \e_{\mathbf{l}_{i+1,n-1}} - \id)^{q-1-p}  a(\mathbf{l}_{i-1},k_{i},\mathbf{l}_{i+1,n-1}).
\end{multline*}  
The latter identity can be proved by induction with respect to $q$. To prove \eqref{first3}, one possibly has to derive similar formulas for 
\begin{equation*}
\Delta_{k_i}^d (\e_{\mathbf{k}_{i+1,n}} - \id)^{\binom{h+2}{2}-1-d} \overline{\GT}_h(\mathbf{k}_n) \quad \text{and} \quad 
 \Delta_{k_{i+1}}^d (\e_{\mathbf{k}_{i+1,n}} - \id)^{\binom{h+2}{2}-1-d} \overline{\GT}_h(\mathbf{k}_n)
\end{equation*}
if $d \ge h$. For $d=h$, this follows immediately from \eqref{first3} and Lemma~\ref{eat}, while for $d>2h$ the expression vanishes as the degree of  $\overline{\GT}_h(\mathbf{k}_n)$ in $k_i$ is no greater than $2h$. 

Let $R_1(X_1,\ldots,X_i) \in \mathcal{I}_{\min(h,i),i}$ and 
$R_2(X_1,\ldots,X_{n-i}) \in \mathcal{I}_{\min(h,n-i),n-i}$. Then, the identity in \eqref{first3} implies that 
$$
\left( \prod_{j=i+1}^n (1+X_i) - 1 \right)^{\binom{h+1}{2}-\binom{h-i+1}{2}-\binom{h+i-n+1}{2}} R_1(X_1,\ldots,X_i)   \in \mathcal{I}_{h,n} 
$$
as well as 
$$
\quad \left( \prod_{j=i+1}^n (1+X_i) - 1 \right)^{\binom{h+1}{2}-\binom{h-i+1}{2}-\binom{h+i-n+1}{2}} R_2(X_{i+1},\ldots,X_n)  \in \mathcal{I}_{h,n}.
$$
We refer to this as the third closure property. Also note that the second closure property is the special case $i=1$ and $i=n$ of the third closure property.

\medskip

Finally, we identify the following elements of $\mathcal{I}_{h,n}$.

\begin{prop}
\label{list}
The following holds:
\begin{enumerate}
\item\label{diff} We have 
$e_i(X_1+1,\ldots,X_n+1) - e_{n-i}(X_1+1,\ldots,X_n+1) \in \mathcal{I}_{h,n}$
for $i \in \{0,1,\ldots,n\}$ as well as 
$
\sum\limits_{i=0}^n (-1)^i e_i(X_1+1,\ldots,X_n+1) \in \mathcal{I}_{h,n}.
$ 
\item\label{degreeGT} We have 
$$
X_1^{h+1},X_n^{h+1} \in \mathcal{I}_{h,n} \quad \text{and} \quad X_2^{2h+1},\ldots,X_{n-1}^{2h+1} \in \mathcal{I}_{h,n}.
$$
\end{enumerate} 
\end{prop}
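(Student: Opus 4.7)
The strategy combines Lemma~\ref{fund}---the symmetric-operator identity $S(\Delta_{\mathbf{k}_n})\overline{\GT}_h = S(\iota(\Delta_{\mathbf{k}_n}))\overline{\GT}_h$ for symmetric $S$---with iterated applications of Lemma~\ref{eat} to collapse sums over strict interlacings, plus the degree estimates that fall out of the Pfaffian formula of Theorem~\ref{GT}. Since $\GT_h(\mathbf{k}_n)=\overline{\GT}_h(\mathbf{k}_n+\mathbf{0\!\uparrow}_n)$ and the shift operators $\Delta_{k_j}$ commute with that translation, it is equivalent to show that the corresponding operators annihilate $\overline{\GT}_h$.

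For part~\eqref{diff}, I first note that under the substitution $X_j=\Delta_{k_j}$ we have $X_j+1=\e_{k_j}$ and $\iota(\Delta_{k_j})+1=\e_{k_j}^{-1}$. Applying Lemma~\ref{fund} to the symmetric polynomial $e_i(X_1+1,\ldots,X_n+1)$ yields $e_i(\e_{\mathbf{k}_n})\overline{\GT}_h = e_i(\e_{\mathbf{k}_n}^{-1})\overline{\GT}_h$. Combining this with the elementary identity $e_{n-i}(Y_1,\ldots,Y_n)=(Y_1\cdots Y_n)\,e_i(Y_1^{-1},\ldots,Y_n^{-1})$ at $Y_j=\e_{k_j}$ and the translation invariance $\prod_j\e_{k_j}\,\overline{\GT}_h=\overline{\GT}_h$ supplied by~\eqref{constant} gives $e_i(\e_{\mathbf{k}_n})\overline{\GT}_h = e_{n-i}(\e_{\mathbf{k}_n})\overline{\GT}_h$, which is the first assertion. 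For the alternating sum, the generating-function evaluation $\sum_{i=0}^n(-1)^ie_i(X_1+1,\ldots,X_n+1)=\prod_{j=1}^n(-X_j)=(-1)^n\prod_{j=1}^nX_j$ reduces the claim to showing $\prod_{j=1}^n\Delta_{k_j}\,\overline{\GT}_h(\mathbf{k}_n)=0$. Here I apply Lemma~\ref{eat} with its parameters $i$ and $j$ set to $n-1$ and $n+1$, respectively, to the recursion $\overline{\GT}_h(\mathbf{k}_n)=\sum_{\mathbf{l}_{n-1}\prec\mathbf{k}_n}\overline{\GT}_{h-1}(\mathbf{l}_{n-1})$, which gives
\[
\Delta_{k_1}\cdots\Delta_{k_{n-1}}\,\overline{\GT}_h(\mathbf{k}_n) \;=\; (-1)^{n-1}\,\overline{\GT}_{h-1}(k_1,\ldots,k_{n-1}),
\]
manifestly independent of $k_n$; one further $\Delta_{k_n}$ kills it.

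For part~\eqref{degreeGT}, the boundary cases $X_1^{h+1}$ and $X_n^{h+1}$ I would handle by iterating Lemma~\ref{eat}. Taking its parameters to be $i=0$, $j=n$, a single application gives $\Delta_{k_n}\overline{\GT}_h(\mathbf{k}_n)=\sum_{\mathbf{l}_{n-2}\prec\mathbf{k}_{n-1}}\overline{\GT}_{h-1}(\mathbf{l}_{n-2},k_n)$; applying Lemma~\ref{eat} again to the recursion for the inner $\overline{\GT}_{h-1}(\mathbf{l}_{n-2},k_n)$ peels off one more variable and drops the height by one. After $h$ iterations the innermost factor becomes $\overline{\GT}_0\equiv 1$, so $\Delta_{k_n}^h\overline{\GT}_h(\mathbf{k}_n)$ is a polynomial in $k_1,\ldots,k_{n-1}$ only, and a final $\Delta_{k_n}$ annihilates it. The case $X_1^{h+1}$ is symmetric, using Lemma~\ref{eat} with parameters $i=1$, $j=n+1$.

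For the interior cases $X_j^{2h+1}$ with $j\in\{2,\ldots,n-1\}$, it suffices to establish $\deg_{k_j}\overline{\GT}_h(\mathbf{k}_n)\le 2h$, after which $\Delta_{k_j}^{2h+1}$ automatically annihilates the polynomial. When $n+h$ is even, this bound can be read off the Pfaffian of Theorem~\ref{GT}: in each matching, $k_j$ appears in exactly one entry, namely $\gt_h(k_l,k_j)$, $\gt_h(k_j,k_l)$, or $\binom{k_j}{h-l}$, and a short inspection of the defining sum---consistent with the closed form $\sgt_h(x,y)=(-1)^h(x-y-h+1)_{2h-1}(x-y)/(2h)!$ for the symmetric part---shows $\deg_x\gt_h(x,y)=2h$, $\deg_y\gt_h(x,y)=h$, while the binomial entries have $k_j$-degree at most $h-1$. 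For $n+h$ odd, the same bound propagates through $\overline{\GT}_h(\mathbf{k}_n)=\Delta_{k_{n+1}}^h\overline{\GT}_h(\mathbf{k}_{n+1})$ (after adjusting for the $\mathbf{0\!\uparrow}$-shift), since $\Delta_{k_{n+1}}^h$ preserves the $k_j$-degree. The only mildly technical step in the whole argument is the $\gt_h$-degree calculation; the asymmetry between the sharper $h$-bound at the boundary and the $2h$-bound in the interior reflects cancellations among Pfaffian matchings that the iterated Lemma~\ref{eat} argument bypasses elegantly for the boundary variables.
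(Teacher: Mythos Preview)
Your proof is correct and follows essentially the same route as the paper, which merely cites Lemma~\ref{urbanrenewal} (via the proof of Lemma~\ref{fund}) for part~\eqref{diff} and asserts the degree bounds $\deg_{k_1}=\deg_{k_n}=h$, $\deg_{k_j}=2h$ for interior~$j$ in part~\eqref{degreeGT}. Your treatment is more explicit: for the alternating sum you use the factorization $\sum_{i}(-1)^i e_i(X_1+1,\ldots,X_n+1)=(-1)^n\prod_j X_j$ together with Lemma~\ref{eat}, and for the boundary variables you iterate Lemma~\ref{eat} directly rather than invoking the degree claim. One small remark: your asserted bound $\deg_y\gt_h(x,y)=h$ is indeed immediate from the definition (the summation limits are independent of~$y$), and combined with $\gt_h(x,y)=2\sgt_h(x,y)-\gt_h(y,x)$ this gives $\deg_x\gt_h\le 2h$, which is all you need for the interior bound; the Pfaffian alone would only give $\le 2h$ for $k_1$, so it is your Lemma~\ref{eat} argument that actually establishes the sharper boundary bound.
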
 

\begin{proof}
\emph{Re \eqref{diff}}: This can be deduced using Lemma~\ref{urbanrenewal}, see also the proof of Lemma~\ref{fund}.

\emph{Re \eqref{degreeGT}}: This follows as the degree of $\overline{\GT}_h(\mathbf{k}_n)$ in $k_1$ and $k_n$ is $h$ and the degree 
in $k_i$ is $2h$ for $i=2,\ldots,n-1$. 
\end{proof} 

\begin{conj}
\label{anni}
The family $\mathcal{I}_{h,n}$ of polynomial ideals in the variables $X_1,\ldots,X_n$ is the minimal family with the following properties:
\begin{enumerate} 
\item $\mathcal{I}_{0,n} = \langle X_i | i=1,\ldots,n \}$.
\item $\mathcal{I}_{n-1,n} = \mathcal{I}_{n,n} = \langle e_i(\mathbf{X}_n) | i \ge 1 \rangle$.
\item The family $\mathcal{I}_{h,n}$, $0 \le h \le n$, satisfies all three closure properties.
\item The ideal $\mathcal{I}_{h,n}$ contains all polynomials listed in Proposition~\ref{list}.
\end{enumerate} 
\end{conj}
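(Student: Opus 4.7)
The plan is to establish the two-sided containment $\mathcal{I}_{h,n}=\mathcal{J}_{h,n}$, where $\mathcal{J}_{h,n}$ denotes the minimal family of ideals satisfying conditions (1)--(4) of the conjecture.

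For the containment $\mathcal{J}_{h,n}\subseteq\mathcal{I}_{h,n}$, I need to verify that the family $\mathcal{I}_{h,n}$ itself satisfies all four conditions. Items (1), (2), and (4) are already discussed in the text, and the first two closure properties in (3) are direct consequences of Lemma~\ref{eat}. The only genuinely missing piece is the third closure property, whose validity rests on the conjectural identity \eqref{first3}. My plan for that identity is induction on $h$ via the recursion $\overline{\GT}_h(\mathbf{k}_n)=\sum_{\mathbf{l}_{n-1}\prec\mathbf{k}_n}\overline{\GT}_{h-1}(\mathbf{l}_{n-1})$ combined with the commutation identity sketched right after \eqref{first3}, which rewrites $(\e_{\mathbf{k}_{i+1,n}}-\id)^q$ applied to the sum as an interior contribution plus a boundary sum localized at $l_i = k_i$. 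Choosing $q=\binom{h+1}{2}-\binom{h-i+1}{2}-\binom{h+i-n+1}{2}$ and iterating, the boundary contributions should collapse by combining Lemma~\ref{eat} with the degree bounds of $\overline{\GT}_h$ in $k_i$ and $k_{i+1}$, reducing the whole expression to the claimed product $\overline{\GT}_{\min(h,i)}(\mathbf{k}_i)\,\overline{\GT}_{\min(h,n-i)}(\mathbf{k}_{i+1,n})$.

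For the reverse containment $\mathcal{I}_{h,n}\subseteq\mathcal{J}_{h,n}$, my plan is a Hilbert-series comparison: show that the finite-dimensional quotients $\mathbb{Q}[\mathbf{X}_n]/\mathcal{I}_{h,n}$ and $\mathbb{Q}[\mathbf{X}_n]/\mathcal{J}_{h,n}$ have the same $\mathbb{Q}$-dimension. The dimension on the $\mathcal{I}_{h,n}$ side equals that of the span of forward-difference translates of $\overline{\GT}_h(\mathbf{k}_n)$; using the Pfaffian from Theorem~\ref{GT} together with $\Delta_{k_i}\binom{k_i}{m}=\binom{k_i}{m-1}$, this span can be written down as an explicit combinatorial count. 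On the $\mathcal{J}_{h,n}$ side, Proposition~\ref{list}\eqref{degreeGT} confines representatives to the monomial box $[0,h]\times[0,2h]^{n-2}\times[0,h]$, Proposition~\ref{list}\eqref{diff} imposes linear relations among the $e_i(X_1+1,\ldots,X_n+1)$, and the three closure properties reduce any remaining polynomial to instances at strictly smaller $n+h$; induction on $n+h$, with base cases $h=0$ and $h\in\{n-1,n\}$, then bounds $\dim\mathbb{Q}[\mathbf{X}_n]/\mathcal{J}_{h,n}$ from above by the value already computed for $\mathcal{I}_{h,n}$.

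The main obstacle will be producing an explicit combinatorial $\mathbb{Q}$-basis for the quotient---the analog of the Garsia--Stanton or Schubert basis in the classical case $h=n$---that simultaneously respects the degree caps from Proposition~\ref{list}\eqref{degreeGT}, the symmetrization relations from Proposition~\ref{list}\eqref{diff}, and the reductions coming from all three closure properties, and that at the same time has cardinality matching the count on the $\mathcal{I}_{h,n}$ side. A plausible candidate is a basis indexed by $(h,n)$-Gelfand--Tsetlin trapezoids with a suitable descent statistic giving the exponent vector, specializing to the known descent basis of the coinvariant algebra when $h=n$. Showing that the third closure property is exactly what is needed to carve the naive monomial basis down to such a combinatorial family---no more and no less---is where I expect the true difficulty to lie.
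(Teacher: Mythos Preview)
This statement is a \emph{conjecture} in the paper, not a theorem; the paper offers no proof of it. Your proposal is therefore not competing against any existing argument --- it is a research plan for an open problem, and should be judged as such.

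Two genuine gaps in the plan deserve to be named. First, even the ``easy'' inclusion $\mathcal{J}_{h,n}\subseteq\mathcal{I}_{h,n}$ is not established: the third closure property rests on identity~\eqref{first3}, which the paper itself flags as conjectural (``Computer experiments suggest\ldots''). Your sketch of an inductive proof via the commutation identity after~\eqref{first3} is precisely what the authors say is ``likely'' to work, but they also indicate that one would need analogous formulas for the mixed operators $\Delta_{k_i}^d(\e_{\mathbf{k}_{i+1,n}}-\id)^{\binom{h+2}{2}-1-d}$ in the intermediate range $h<d\le 2h$, and these are not worked out. Until~\eqref{first3} is actually proved, the minimal family $\mathcal{J}_{h,n}$ is not even known to sit inside $\mathcal{I}_{h,n}$.

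Second, for the reverse inclusion your Hilbert-series strategy is reasonable in spirit but is not a proof: you do not compute either dimension, and the candidate basis ``indexed by $(h,n)$-Gelfand--Tsetlin trapezoids with a suitable descent statistic'' is speculation. The classical coinvariant case works because one has explicit bases (Schubert, Garsia--Stanton, Artin) together with a matching cardinality $n!$; here no analogue is known, and the three closure properties are recursive rather than giving a direct monomial ideal, so producing a straightening algorithm that terminates exactly at the right cardinality is the heart of the problem, not a detail to be filled in. In short, your proposal correctly identifies the shape a proof would take, but both halves remain open.
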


Note that if the conjecture is true then it provides a generating set for all $\mathcal{I}_{h,n}$, inductively with respect to $h+n$.

\bibliographystyle{alpha}
\bibliography{asmpp.bib}

\end{document}